\newcommand*{\MRref}[2]{ \href{http://www.ams.org/mathscinet-getitem?mr=#1}{MR #1}}
\newcommand*{\arxiv}[1]{\href{http://www.arxiv.org/abs/#1}{arXiv: #1}}
\numberwithin{equation}{section}
\theoremstyle{plain}
\newtheorem{theorem}[equation]{Theorem}
\newtheorem{lemma}[equation]{Lemma}
\newtheorem{proposition}[equation]{Proposition}
\newtheorem{corollary}[equation]{Corollary}
\theoremstyle{definition}
\newtheorem{definition}[equation]{Definition}
\theoremstyle{remark}
\newtheorem{remark}[equation]{Remark}
\newtheorem{example}[equation]{Example}
\DeclareMathOperator{\Aut}{Aut}
\DeclareMathOperator{\cspn}{\overline{span}}
\DeclareMathOperator{\spn}{{span}}
\DeclareMathOperator{\id}{\mathrm{id}}
\DeclareMathOperator{\pt}{\mathrm{pt}}
\DeclareMathOperator{\End}{\mathrm{End}}
\newcommand*{\nb}{\nobreakdash}
\newcommand*{\Star}{\(^*\)\nobreakdash-}
\newcommand*{\R}{\mathbb R}
\newcommand*{\T}{\mathbb T}
\newcommand*{\Z}{\mathbb Z}
\newcommand*{\N}{\mathbb N}
\newcommand*{\C}{\mathbb C}
\newcommand*{\Q}{\mathbb Q}
\newcommand*{\Lb}{\mathcal L}
\renewcommand*{\L}{\mathcal L}
\newcommand*{\K}{\mathcal K}
\renewcommand*{\H}{\mathcal H}
\newcommand*{\F}{\mathcal F} 
\newcommand*{\cont}{C}
\newcommand*{\contc}{\cont_c}
\newcommand*{\Mat}{\mathbb M}
\newcommand*{\M}{\mathcal M}
\newcommand*{\Rel}{\mathcal R}
\newcommand*{\Ad}{\textup{Ad}}
\newcommand*{\U}{\mathcal U}
\newcommand*{\E}{\mathcal E}
\newcommand*{\defeq}{\mathrel{\vcentcolon=}}
\newcommand*{\ket}[1]{\lvert#1\rangle}
\newcommand*{\bra}[1]{\langle#1\rvert}
\newcommand*{\braket}[2]{\langle#1\!\mid\!#2\rangle}
\newcommand*{\bbraket}[2]{\mathopen{\langle\!\langle}#1\!\mid\!#2\mathclose{\rangle\!\rangle}}
\newcommand*{\kket}[1]{{\lvert#1\mathclose{\rangle\!\rangle}}}
\newcommand*{\bbra}[1]{{\mathopen{\langle\!\langle}#1\rvert}}
\newcommand*{\sbe}{\subseteq} 
\newcommand*{\cstar}{\texorpdfstring{$C^*$\nobreakdash-\hspace{0pt}}{*-}}
\newcommand*{\into}{\hookrightarrow}
\newcommand*{\red}{r}
\newcommand*{\un}{u}
\newcommand*{\Fix}{\mathrm{Fix}}
\newcommand*{\dual}[1]{\widehat{#1}}
\newcommand*{\dualG}{\widehat{G}}
\newcommand{\om}{\omega}
\newcommand*{\su}{\mathrm{su}}
\newcommand*{\vN}{\operatorname{vN}}
\newcommand{\si}{\mathrm{si}}
\newcommand{\ii}{\mathrm{i}}
\newcommand{\eg}{\emph{e.g.~}}
\newcommand*{\Sch}{\mathcal S} 
\begin{document}
\title[Rieffel proper actions]{Rieffel proper actions}

\author{Alcides Buss}
\email{alcides.buss@ufsc.br}
\address{Departamento de Matem\'atica\\
 Universidade Federal de Santa Catarina\\
 88.040-900 Florian\'opolis-SC\\
 Brazil}

\author{Siegfried Echterhoff}
\email{echters@math.uni-muenster.de}
\address{Mathematisches Institut\\
 Westf\"alische Wilhelms-Universit\"at M\"un\-ster\\
 Einsteinstr.\ 62\\
 48149 M\"unster\\
 Germany}

\begin{abstract}
In the late 1980's Marc Rieffel introduced a notion of properness for actions
of locally compact groups on C*-algebras which, among other things, allows
the construction of generalised fixed-point algebras for such actions.
In this paper we give a simple characterisation of Rieffel proper actions and
use this to obtain several (counter) examples for the
theory. In particular, we provide examples of Rieffel proper actions $\alpha:G\to\Aut(A)$ for which
properness is not induced by a nondegenerate equivariant *-homomorphism $\phi:C_0(X)\to \M(A)$
for any proper $G$-space $X$. Other examples, based on earlier work of Meyer,  show that a given action might carry different
structures for Rieffel properness with different generalised fixed-point algebras.
\end{abstract}

\subjclass[2010]{46L55, 22D35}


\thanks{Supported by Deutsche Forschungsgemeinschaft  (SFB 878, Groups, Geometry \& Actions) and by PVE/CAPES \& CNPq -- Brazil.}
\maketitle

\section{Introduction}\label{sec:introduction}
In \cite{Rieffel:Proper} Marc Rieffel introduced a notion of proper actions (which we call
{\em Rieffel proper actions} below) of a group $G$ on a
\cstar{}algebra $A$ which allows the construction of a generalised fixed-point algebra $A^G$ together with a natural Morita equivalence bimodule between this algebra and a suitable ideal of the reduced crossed product $A\rtimes_{\alpha,r} G$.
Rieffel's notion of properness depends on a choice of a dense \Star{}subalgebra $A_0$ of $A$ which
must satisfy a number of quite technical conditions (see \S\ref{sec-Rieffel-proper} below).
One of these conditions requires that
for all $\xi,\eta\in A_0$ the functions
$$t\mapsto \bbraket {\xi}{\eta}(t)\defeq\Delta(t)^{-1/2}\xi^*\alpha_t(\eta)$$
lie in $L^1(G,A)\subseteq A\rtimes_{\alpha,r}G$.
Rieffel's conditions  allow the construction of a corresponding generalised fixed-point algebra $A^G\subseteq \M(A)$ and an
equivalence bimodule $\F(A_0)$ between $A^G$ and the closed ideal $\mathcal{I}_{A_0}\subseteq A\rtimes_r G$
generated by all elements of the form $\{\bbraket{\xi}{\eta}: \xi,\eta\in A_0\}$.

In this paper we show that an action $\alpha:G\to \Aut(A)$ is Rieffel proper if and only if there exists a dense
\emph{subspace} (not necessarily a subalgebra) $\mathcal R\subseteq A$ which satisfies the following single condition:
\begin{enumerate}
\item[{\bf(P1)}] For all $\xi,\eta\in \mathcal R$ the functions $t\mapsto \xi^*\alpha_t(\eta)$ and $t\mapsto \Delta(t)^{-1/2}\xi^*\alpha_t(\eta)$
belong to $L^1(G,A)$.
\end{enumerate}
If $A_0\subseteq A$ is a dense \Star{}subalgebra which satisfies Rieffel's original conditions, it also
satisfies (P1). We show that, conversely, if $\mathcal R$ is as above, then there is a canonical construction of
a dense \Star{}subalgebra $A_{\mathcal R}$ which satisfies Rieffel's conditions. As easy corollaries  we get the following useful results:
\begin{enumerate}
\item Assume $A$ and $B$ are $G$\nb-algebras such that there exists a nondegenerate $G$\nb-equivariant
\Star{}homomorphism $\phi:A\to\M(B)$. Then, if $A$ is Rieffel proper, so is $B$.
\item If $A$ is a Rieffel proper $G$\nb-algebra and $B$ is a Rieffel proper $H$-algebra, then
$A\otimes_\nu B$ is a Rieffel proper $G\times H$-algebra, where $\otimes_\nu$ might denote
the minimal or maximal tensor product.
\end{enumerate}
These basic results seem to have been not noticed for general Rieffel proper $G$\nb-al\-ge\-bras, although
the first of these results is well known in the case $A=C_0(X)$ for some proper $G$\nb-space $X$. Indeed, most standard examples of Rieffel proper actions of a group $G$ on a C*-algebra $B$, like dual actions of groups on crossed products by coactions, come naturally equipped with  a nondegenerate $G$\nb-equivariant \Star{}homomorphism
$\phi:C_0(X)\to \M(B)$ for some proper $G$\nb-space $X$, and actions with this extra property
have been studied extensively in the literature (e.g., see \cites{Huef-Kaliszewski-Raeburn-Williams:Naturality_Rieffel, anHuef-Raeburn-Williams:Symmetric, Buss-Echterhoff:Exotic_GFPA, Buss-Echterhoff:Imprimitivity, Buss-Echterhoff:Mansfield}).
Following \cites{Buss-Echterhoff:Exotic_GFPA, Buss-Echterhoff:Imprimitivity, Buss-Echterhoff:Mansfield}, we shall call such actions to be {\em weakly proper}.
It has been shown in \cite{Buss-Echterhoff:Exotic_GFPA} that weakly proper actions enjoy many properties
which are (so far) unknown for general Rieffel proper actions. The most remarkable one is that
they allow analogous constructions of the Hilbert $A\rtimes_{\alpha,r}G$-module $\F(A_0)$ for
the universal crossed products $A\rtimes_{\alpha}G:=A\rtimes_{\alpha,\un}G$ and of corresponding universal
generalised fixed-point algebras
$A_\un^G$ with many interesting properties. Looking at the vast number of examples
of weakly proper actions, we were wondering, whether every Rieffel proper action is also
weakly proper.

In \S\ref{sec-twisted} we show that this is not the case. Using our characterisation of Rieffel proper actions
together with Rieffel's deformation $C_0(\R^n)_J$ of $C_0(\R^n)$ by a skew-symmetric matrix $J\in {\Mat_n}(\R)$,
we show that the dual action of the Pontrjagin dual $\widehat G$ of an abelian locally compact group
$G$ on any twisted group algebra $C^*(G,\om)$ attached to any $2$-cocycle $\om\in Z^2(G,\T)$ is Rieffel proper.
On the other hand, if $G$ is connected, we can show that this dual action is weakly proper
only if $\om$ is similar to the trivial cocycle. This shows that there are many natural examples of Rieffel proper actions
which are not weakly proper.

In the final section \S 4 we study the question whether  the generalised fixed-point algebra $A^G$
for a Rieffel proper action  $\alpha:G\to \Aut(A)$ is independent of the choice of the dense subalgebra $A_0\subseteq A$ (or the dense
subspace $\mathcal R\subseteq A$ of our condition (P1)).
Indeed, examples for a dependence on similar
structures have been constructed already by Ralf Meyer in the setting of ``continuously square-integrable actions'', which we here call ``Exel-Meyer proper actions''; these are based on the theory of square-integrable actions
 (see \cites{Exel:SpectralTheory, Meyer:Equivariant, Meyer:generalised_Fixed}) and  generalise Rieffel proper actions.
 Using our main result, we show that many of  Meyer's examples are also Rieffel proper,
hence also provide  examples for the dependence of the fixed-point algebra $A^G$ on the choice of
the dense subalgebra $A_0$ in this setting.
To our knowledge, this provides the first examples for this dependence in the
setting of Rieffel proper actions.

The authors are grateful to Sergey Neshveyev for some helpful discussions on Rieffel deformation theory that led us to the examples in \S\ref{sec-twisted}.

Most of this work has been carried out through a visit of the second author at the Department of Mathematics of the Federal University of Santa Catarina. He thanks the members of the department for their warm hospitality and CAPES for making this visit possible.

\section{Rieffel proper actions}\label{sec-Rieffel-proper}
Suppose $G$ is a locally compact group
acting by a strongly continuous homomorphism $\alpha:G\to\Aut(A)$ on the C*-algebra $A$.
Then, in \cite{Rieffel:Proper}*{Definition 1.2}, Rieffel defines this action to be {\em proper} (which we call {\em Rieffel proper})
if there exists a dense $G$\nb-invariant \Star{}subalgebra $A_0$ of $A$ such that for all $\xi,\eta\in A_0$:
\begin{enumerate}
\item[{\bf (P1)}] the functions $t\mapsto \Delta(t)^{-1/2}\xi^*\alpha_t(\eta)$
and $t\mapsto \xi^*\alpha_t(\eta)$ belong to $L^1(G,A)$;
\item[{\bf (P2)}] there is a (necessarily unique) element $m={_{A^G}\braket{\xi}{\eta}}$ in $$\M(A)^{G,\alpha}\defeq\left\{m\in \M(A): \alpha_t(m)=m\;\mbox{ for all } t\in G\right\}$$ such that $_{A^G}\braket{\xi}{\eta}\zeta=\int_G\alpha_t(\xi\eta^*)\zeta\,dt$ for all $\zeta\in A_0$; and
\item[{\bf (P3)}] $m\cdot A_0\sbe A_0$ and $A_0\cdot m\sbe A_0$ for $m={_{A^G}\braket{\xi}{\eta}}$ as in (P2).
\end{enumerate}
Given such $A_0$, Rieffel shows in \cite{Rieffel:Proper} that $A^{G,\alpha}:=\cspn\{_{A^G}\braket{\xi}{\eta}: \xi,\eta\in A_0\}\subseteq \M(A)$ is a \cstar{}subalgebra of $\M(A)$ and that $A_0$ equipped with the inner product ${_{A^G}\braket{\xi}{\eta}}$ completes to give a left Hilbert $A^{G,\alpha}$-module ${\F(A_0):=}\overline{A}_0$. The \cstar{}algebra $A^{G,\alpha}$ (or simply $A^G$ if the action $\alpha$ is clear) is called the {\em generalised fixed-point algebra} for the proper action $\alpha$ (with respect to $A_0$). Clearly, if $G$ is compact, this coincides with the classical fixed-point algebra for any dense $A_0\sbe A$. Rieffel also shows that ${\F(A_0)}$ carries a right Hilbert module structure over the reduced crossed product $A\rtimes_{\alpha,r}G$ in such a way that ${\F(A_0)}$ is a Hilbert $A^{G,\alpha}-A\rtimes_{\alpha,r}G$-bimodule.

The module $\F(A_0)$ can be concretely described as the completion of $A_0$ with respect to the Hilbert $A^{G,\alpha}-A\rtimes_{\alpha,r}G$-bimodule structure given by the formulas:
\begin{alignat*}{2}
\bbraket{\xi}{\eta}_{A\rtimes_{\alpha,r}G}(t)&\defeq \Delta(t)^{-1/2}\xi^*\alpha_t(\eta),\quad \xi,\eta\in A_0, t\in G\\
\xi*\varphi &\defeq \int_G\Delta(t)^{-1/2}\alpha_t(\xi\varphi(t^{-1}))dt,\\
_{A^G}\braket{\xi}{\eta}\zeta&\defeq \int_G\alpha_t(\xi\eta^*)\zeta dt,\\
m\cdot \xi&\defeq ma\quad \mbox{(product in $\M(A)$)}.
\end{alignat*}
Here the first formula gives an element in $L^1(G,A)\subseteq A\rtimes_{\alpha,\red}G$ and the second formula
works for all $\varphi\in L^1(G,A)$ for which the integral provides an element in $A_0$.
The bimodule $\F(A_0)$ is always full as a left Hilbert $A^{G,\alpha}$-module, but the right inner product is not full in general
since the ideal $\mathcal I=\cspn\{\bbraket{A_0}{A_0}_{A\rtimes_rG}\}\subseteq A\rtimes_rG$ may be proper. The action $\alpha:G\to \Aut(A)$ is called {\em saturated} (with respect to $A_0$) if this ideal is all of $A\rtimes_{\alpha,r}G$. Then $\F(A_0)$ becomes a Morita equivalence between $A^{G,\alpha}$ and $A\rtimes_{\alpha,r}G$. In general, $\F(A_0)$ becomes a Morita equivalence between $A^{G,\alpha}$ and the ideal
$\mathcal I=\cspn\{\bbraket{A_0}{A_0}_{A\rtimes_rG}\}\subseteq A\rtimes_rG$.

\begin{remark}\label{rem-left-right} We should note that the module $\F(A_0)$ described above is actually the dual of
the $A\rtimes_{\alpha,\red}G-A^{G,\alpha}$ module as constructed originally by Rieffel in \cite{Rieffel:Proper}.
\end{remark}

It is tempting to write $_{A^{G}}\braket{\xi}{\eta}$ as a sort of integral $\int_G\alpha_t(\xi\eta^*)dt$. Although this integral cannot converge
as a Bochner integral in general, one can make sense of it as a \emph{strict-unconditional integral} as defined in \cites{Exel:Unconditional,Exel:SpectralTheory}:

\begin{definition}\label{def-integrable}
One says that a measurable function $f\colon G\to A$ is strictly unconditionally integrable if the net of Bochner integrals $\int_Kf(t)dt$ for $K$ running over all compact subsets of $G$ (ordered by inclusion) converges in the strict topology of $\M(A)$; the strict limit is then denoted by $\int^\su_Gf(t)dt$.

If $(A,\alpha)$ is a $G$\nb-algebra, an element $\xi\in A$ is called \emph{square-integrable} if the function $t\mapsto \alpha_t(\xi\xi^*)$ is strictly unconditionally integrable. We write $A_\si$ for the space of all square-integrable elements of $A$.
The {\em space of integrable elements} $A_\ii$ is then defined as the linear span of $A_\si A_\si^*$, i.e., linear combinations of elements of the form $\xi\eta^*$ with $\xi,\eta\in A_\si$. The $G$\nb-algebra $(A,\alpha)$ is \emph{integrable} if $A_\ii$ (or equivalently $A_\si$) is dense in $A$.
\end{definition}

Rieffel calls integrable actions also ``proper'' in \cite{Rieffel:Integrable_proper}, but as shown in \cite{Meyer:generalised_Fixed} this class of actions is strictly bigger than the Rieffel proper actions of \cite{Rieffel:Proper} as recalled above.

In \cite{Rieffel:Integrable_proper}*{Proposition~4.6} Rieffel shows that (P1) and the density of $A_0$ implies the existence of the strict-unconditional integrals $\int_G^\su\alpha_t(\xi\eta^*)dt$, so that $A_0\sbe A_\si$ and hence $A$ is an integrable $G$\nb-algebra.
More precisely, the proof of Proposition~4.6 in \cite{Rieffel:Integrable_proper} shows the following:

\begin{proposition}
Suppose $\Rel$ is a dense subspace (not necessarily a \Star{}subalgebra) such that
the functions $t\mapsto \xi^*\alpha_t(\eta)$ belong to $L^1(G,A)$ for all $\xi,\eta\in \Rel$.
Then the integrals $\int_G^\su\alpha_t(\xi\eta^*)\,dt$ exist for all $\xi,\eta\in \Rel$.
In particular, if $\Rel\subseteq A$ satisfies Rieffel's condition \textup{(P1)}, it also satisfies \textup{(P2)}.
\end{proposition}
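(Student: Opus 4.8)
The plan is to reduce the existence of the strict-unconditional integrals $\int_G^\su \alpha_t(\xi\eta^*)\dd t$ to a boundedness statement plus convergence on a dense set, using the strict topology on $\M(A)$ and the hypothesis (P1). Recall that a net $(m_K)$ in $\M(A)$, indexed by compact $K\sbe G$, converges strictly iff both $m_K a$ and $a m_K$ converge in norm for every $a$ in a dense subset of $A$, provided the net $(\|m_K\|)$ is bounded. So the three things I would establish are: (i) uniform boundedness of $\big\|\int_K \alpha_t(\xi\eta^*)\dd t\big\|$ over compact $K$; (ii) norm-convergence of $\int_K \alpha_t(\xi\eta^*)\zeta\dd t$ as $K\nearrow G$ for $\zeta$ in a dense subset; and symmetrically for $\zeta\int_K\alpha_t(\xi\eta^*)\dd t$.

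For (ii), take $\zeta\in\Rel$. Then
$$
\int_K \alpha_t(\xi\eta^*)\zeta\dd t=\int_K\alpha_t\big(\xi\,(\alpha_{t^{-1}}(\zeta)\,\eta^*)^*\big)\dd t,
$$
and one recognises $t\mapsto \alpha_{t^{-1}}(\zeta^*)\eta$, i.e. after a change of variables the integrand is controlled by the $L^1$-function $t\mapsto \eta^*\alpha_t(\zeta)$ coming from hypothesis (P1) applied to the pair $\eta,\zeta\in\Rel$ — more precisely $\|\alpha_t(\xi\eta^*)\zeta\|=\|\alpha_t(\xi)\,\alpha_t(\eta^*\alpha_{t^{-1}}\zeta)\|\le\|\xi\|\cdot\|\eta^*\alpha_{t^{-1}}(\zeta)\|$, and $t\mapsto\|\eta^*\alpha_{t^{-1}}(\zeta)\|$ is in $L^1(G)$ because $t\mapsto\eta^*\alpha_t(\zeta)\in L^1(G,A)$ and inversion preserves integrability up to the modular function, which is exactly why (P1) is stated with both $\Delta(t)^{-1/2}$-twisted and untwisted versions. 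Hence $t\mapsto\alpha_t(\xi\eta^*)\zeta$ is Bochner integrable over all of $G$, so the net of partial integrals is norm-Cauchy and converges. The symmetric statement, $\zeta\int_K\alpha_t(\xi\eta^*)\dd t$ for $\zeta\in\Rel$, is handled the same way using $\|\zeta\alpha_t(\xi\eta^*)\|\le\|\zeta\alpha_t(\xi)\|\,\|\eta\|$ and the $L^1$-property of $t\mapsto\zeta^*\alpha_t(\xi)$ (take adjoints). Since $\Rel$ is dense in $A$, this gives convergence of $m_K a$ and $am_K$ for $a$ in a dense set.

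For (i), the uniform boundedness: this is the step I expect to be the crux, since without it the pointwise convergence on a dense set does not upgrade to strict convergence. The idea is the standard Cohen–Hewitt / positivity trick used by Rieffel: for a positive element, $\big\|\int_K\alpha_t(\xi\xi^*)\dd t\big\|$ is controlled by testing against states, and one bounds $\langle\varphi,\int_K\alpha_t(\xi\xi^*)\dd t\rangle=\int_K\varphi(\alpha_t(\xi\xi^*))\dd t$; using a factorisation $\xi\xi^*=bb^*$ with $b$ approximately in the span of $\Rel$ and the Cauchy–Schwarz inequality $\varphi(\alpha_t(bb^*))^{1/2}\le \dots$ one reduces to the $L^1$-bound already obtained. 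Concretely, for a general pair one polarises $\xi\eta^*=\frac14\sum_{k=0}^3 i^k(\xi+i^k\eta)(\xi+i^k\eta)^*$, so it suffices to bound $\big\|\int_K\alpha_t(\zeta\zeta^*)\dd t\big\|$ for $\zeta$ a finite linear combination of elements of $\Rel$; and for such $\zeta$, $\big\|\int_K\alpha_t(\zeta\zeta^*)\dd t\big\|=\sup\{\|\int_K\alpha_t(\zeta\zeta^*)\dd t\,a\|:\|a\|\le1\}$, which by the density of $\Rel$ and (ii) is $\le \sup_{\|a\|\le1, a\in\Rel}\int_G\|\zeta^*\alpha_t(\zeta)\|\,\|a\|\dd t<\infty$ — wait, this needs care because the test vector $a$ ranges over a ball, not a single element; the honest route is Rieffel's: the function $t\mapsto\alpha_t(\zeta\zeta^*)$ is positive, so $\int_K\alpha_t(\zeta\zeta^*)\dd t\le\int_{K'}\alpha_t(\zeta\zeta^*)\dd t$ in $\M(A)$ whenever $K\sbe K'$, and an increasing net of positive elements in $\M(A)$ that is bounded above (here by the would-be limit, which one first produces as a weak-* limit using that $t\mapsto\varphi(\alpha_t(\zeta\zeta^*))$ is in $L^1(G)$ for every state $\varphi$ — again from (P1) via Cauchy–Schwarz against $\varphi$) is automatically strictly convergent with norm bounded by that upper bound. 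This is precisely the content of Rieffel's Proposition 4.6 argument, so I would simply invoke that mechanism rather than reprove it.

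Putting the pieces together: for $\xi,\eta\in\Rel$, polarisation reduces to the positive case $\xi=\eta$; there (i) produces, via positivity and the $L^1$-estimates from (P1), an increasing bounded-above net of partial integrals in $\M(A)^+$, which converges strictly; and (ii) identifies its action on the dense subspace $\Rel$. Linearising back, $\int_G^\su\alpha_t(\xi\eta^*)\dd t$ exists for all $\xi,\eta\in\Rel$. The final sentence of the proposition is then immediate: if $\Rel$ satisfies (P1), the displayed strict-unconditional integral exists, and testing against $\zeta\in\Rel$ shows $\int_G^\su\alpha_t(\xi\eta^*)\dd t\cdot\zeta=\int_G\alpha_t(\xi\eta^*)\zeta\dd t$, which by density determines a unique $m\in\M(A)$ with $m\zeta=\int_G\alpha_t(\xi\eta^*)\zeta\dd t$ for all $\zeta\in\Rel$; its $G$-invariance follows from the $G$-invariance of Haar measure (up to the modular function, which cancels here because the integrand is $\alpha_t(\xi\eta^*)$ with left Haar measure on the left factor), giving (P2). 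The main obstacle, as indicated, is the uniform boundedness in (i); everything else is bookkeeping with (P1) and change-of-variables against the modular function.
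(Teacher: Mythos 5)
Your overall frame is sound in outline: norm boundedness of the partial integrals together with norm convergence of $m_Ka$ and $am_K$ for $a$ in a dense set does give strict convergence, and your step (ii) is correct in substance, since $\|\alpha_t(\xi\eta^*)\zeta\|\le\|\xi\|\,\|\alpha_t(\eta)^*\zeta\|=\|\xi\|\,\|\zeta^*\alpha_t(\eta)\|$ is integrable by the hypothesis applied to the pair $(\zeta,\eta)$ (note that no inversion/modular-function argument is needed here, and the one you sketch would in fact require $\Delta^{-1}$-weighted integrability, which is not assumed). The genuine gap sits exactly where you flag the crux, step (i), and neither of the two arguments you offer there works. First, it is \emph{not} true that an increasing net of positive elements of $\M(A)$ which is bounded above converges strictly: in $\M(C_0(\R))=C_b(\R)$ the functions $a_n(x)=\min\{1,\,n\max(x,0)\}$ form an increasing net bounded by $1$ that does not converge strictly. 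The correct statement in this direction (Kustermans--Vaes, Result~3.4, which the paper's Remark invokes) is that an increasing net in $\M(A)^+$ converges strictly if and only if the compressions $b^*a_ib$ converge in norm for \emph{every} $b\in A$; the entire difficulty of the proposition is to pass from $b$ in the dense subspace $\Rel$, where your estimate applies, to arbitrary $b$, and without an a priori norm bound this passage is not formal. Second, your proposed source of the bound --- that $t\mapsto\varphi(\alpha_t(\zeta\zeta^*))$ lies in $L^1(G)$ for every state $\varphi$ ``by Cauchy--Schwarz'' --- is unjustified: the hypothesis yields this only for functionals of the form $a\mapsto\varphi(b^*ab)$ with $b\in\Rel$, and extending it to arbitrary states is precisely the same dense-to-everything problem; moreover, even granting it, one still needs uniformity over the state space to get $\sup_K\|\int_K\alpha_t(\zeta\zeta^*)\,dt\|<\infty$, and a dominating weak-$*$ limit in $A^{**}$ does not by itself produce strict convergence in $\M(A)$.

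For comparison, the paper does not attempt the boundedness-plus-density route at all: its proof of the proposition is the observation that Rieffel's proof of Proposition~4.6 in \cite{Rieffel:Integrable_proper} uses only these hypotheses, and the accompanying remark sketches an alternative based on the Kustermans--Vaes criterion just quoted, applied to the net $\int_K\alpha_t(\xi\xi^*)\,dt$ with the dense set $D=\Rel$, followed by the same polarization you use. Your closing appeal to ``Rieffel's Proposition~4.6 mechanism'' is in spirit the citation the paper itself makes, but the mechanism as you describe it (weak-$*$ limit plus ``increasing and bounded above implies strictly convergent'') is not Rieffel's argument and is false as stated, so the proposal as written does not close the gap it correctly identifies. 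To repair it within your framework, either cite Rieffel's proof (or Meyer's Proposition~6.5) for the existence of the strict limit, or supply a genuine bound: for instance, under the full condition (P1) one gets $\sup_K\|\int_K\alpha_t(\xi\xi^*)\,dt\|\le\int_G\Delta(t)^{-1/2}\|\xi^*\alpha_t(\xi)\|\,dt$ from contractivity of the regular representation on $L^1(G,A)$; but the proposition assumes only the untwisted $L^1$ condition, so even this shortcut is unavailable for the statement in its stated generality.
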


\begin{remark}
(1) The above proposition can be shown by using the following general fact:
an increasing net $(a_i)$ of positive elements in $\M(A)$ converges in the strict topology if and only if
the increasing net $(b^*a_ib)$ converges in the norm of $A$ for all $b$ in a dense subset $D\sbe A$.
This follows from \cite{Kustermans-Vaes:Weight}*{Result~3.4} and the easily verified fact that the set of all $b\in A$ such that $(b^*a_ib)$ converges in norm is closed in $A$. Now, in the setting of the proposition, apply this to the net of integrals
$\int_K\alpha_t(\xi\xi^*)\, dt$ for $K\sbe G$ varying over the compact subsets directed by inclusion and use $D=\Rel$. This gives the existence of $\int^\su_G\alpha_t(\xi\xi^*)\, dt$ for all $\xi\in \Rel$ and the integrals
$\int_G^\su\alpha_t(\xi\eta^*)\, dt$ then exist by the polarization identity.
\medskip
\\
(2) The above proposition has been generalised in \cite{Meyer:generalised_Fixed}*{Proposition~6.5}, where it is shown that the strict-unconditional integrability of $t\mapsto \alpha_t(\xi\eta^*)$ follows from the assumption that the functions $t\mapsto \xi^*\alpha_t(\eta)$ belong to $A\rtimes_{\alpha,r}G$ in a suitable sense (by interpreting these functions as kernels of certain ``Laurent operators'', as explained in \cite{Meyer:generalised_Fixed} or in \cite{Exel:SpectralTheory}; see also \S\ref{sec:counter} below) for $\xi,\eta$ in a dense subspace of $A$.

A similar idea also implies the strict-unconditional integrability of $t\mapsto \alpha_t(\xi\eta^*)$ if the functions $t\mapsto \Delta(t)^{-1/2}\xi^*\alpha_t(\eta)$ belong to $A\rtimes_{\alpha,r}G$ for all $\xi,\eta$ in a dense subspace of $A$.
\end{remark}

The proposition shows that there are some redundancies in Rieffel's original definition of a proper action in \cite{Rieffel:Proper}. Indeed, next we show that only the first condition (P1) is necessary in order to get a Rieffel proper action. For this we first need to fix some notations.

Given any elements $\xi,\eta\in A$, we shall always write $\bbraket{\xi}{\eta}$ for the continuous function $[t\mapsto \Delta(t)^{-1/2}\xi^*\alpha_t(\eta)]$, even if this is not in $L^1(G,A)$ or in $A\rtimes_{\alpha,r}G$. Also, given $\xi\in A$ and
$\varphi:G\to A$ a measurable function, we write $\xi*\varphi\defeq \int_G\Delta(t)^{-1/2}\alpha_t(\xi \varphi(t^{-1}))\, dt$ whenever this makes sense. We shall use the notation:
\begin{align*}
L^1_\Delta(G,A)&\defeq L^1(G,A)\cap \Delta^{1/2}L^1(G,A)\cap \Delta^{-1/2}L^1(G,A)\\
&=\{f\in L^1(G,A):\Delta^{-1/2}f\mbox{ and } \Delta^{1/2}f\in L^1(G,A)\}.
\end{align*}
It is easy to verify the relations:
$$\Delta^{p}(f*g)=(\Delta^{p}f)*(\Delta^{p}g),\quad (\Delta^{p}f)^*=\Delta^{-p}f^*$$
for any power $p\in \R$. In particular it follows that $L^1_\Delta(G,A)$ is a \Star{}subalgebra of $L^1(G,A)$. It is a dense subalgebra because it contains $\contc(G,A)$.

\begin{proposition}\label{prop:CharacterizationRieffelProperActions}
An action $(A,\alpha)$ is Rieffel proper if and only if there is a dense subspace $\Rel\sbe A$ such that for all $\xi,\eta\in \Rel$, we have
$$
\bbraket{\xi}{\eta}=\Big(t\mapsto \Delta(t)^{-1/2} \xi^*\alpha_t(\eta)\Big)\in L^1(G,A)\cap \Delta^{-1/2}L^1(G,A). \leqno{\textup{(P1)}}
$$
In this case, $\bbraket{\Rel}{\Rel}\sbe L^1_\Delta(G,A)$ and $A_0:={\spn}(\Rel*L^1_\Delta(G,A))\cdot (\Rel*L^1_\Delta(G,A))^*$ is a dense \Star{}subalgebra of $A$ which satisfies Rieffel's conditions \textup{(P1)-(P3)}. Hence $(A, \alpha)$ is Rieffel proper with respect to $A_0$.
\end{proposition}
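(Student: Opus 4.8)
The plan is to prove the two directions separately, the forward direction being essentially trivial and the converse constituting the real content. For the forward direction: if $(A,\alpha)$ is Rieffel proper with respect to a dense $G$-invariant \Star{}subalgebra $A_0$ satisfying Rieffel's (P1)--(P3), then taking $\Rel = A_0$ immediately gives a dense subspace satisfying (P1) in the form stated, since Rieffel's (P1) asks exactly that $\bbraket{\xi}{\eta}$ and $t\mapsto \xi^*\alpha_t(\eta)$ lie in $L^1(G,A)$, and $t\mapsto \xi^*\alpha_t(\eta) = \bbraket{\xi}{\eta}\cdot\Delta^{1/2}$ so this is $\Delta^{-1/2}\bbraket{\xi}{\eta}\in \Delta^{-1/2}L^1(G,A)$ after reparametrising — one should double-check the bookkeeping with $\Delta$ here, but it is routine.

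For the converse, assume $\Rel\sbe A$ is a dense subspace satisfying (P1). The first step is to verify $\bbraket{\Rel}{\Rel}\sbe L^1_\Delta(G,A)$: we already have $\bbraket{\xi}{\eta}\in L^1(G,A)$ and $\Delta^{-1/2}\bbraket{\xi}{\eta}\in L^1(G,A)$ by hypothesis, so it remains to see $\Delta^{1/2}\bbraket{\xi}{\eta}\in L^1(G,A)$; this should follow from applying the hypothesis with the roles of $\xi$ and $\eta$ interchanged together with the substitution $t\mapsto t^{-1}$ and the identity $\bbraket{\xi}{\eta}(t)^* = \Delta(t)^{1/2}\bbraket{\eta}{\xi}(t^{-1})$-type relation coming from $(\Delta^p f)^* = \Delta^{-p}f^*$. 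Next, define $A_0 := \spn\big((\Rel*L^1_\Delta(G,A))\cdot(\Rel*L^1_\Delta(G,A))^*\big)$ and check it is well-defined, i.e., that $\xi*\varphi\in A$ for $\xi\in\Rel$, $\varphi\in L^1_\Delta(G,A)$: since $\bbraket{\xi}{\eta}\in L^1(G,A)$ for all $\eta\in\Rel$, one shows the Bochner integral $\xi*\varphi = \int_G \Delta(t)^{-1/2}\alpha_t(\xi\varphi(t^{-1}))\dd t$ converges — this is where the $\Delta^{-1/2}$ half of (P1) does its job, controlling $\Delta(t)^{-1/2}\xi^*\alpha_t(\eta)$ in $L^1$ — then pass from $\eta\in\Rel$ to arbitrary $\varphi(t^{-1})\in A$ by density of $\Rel$ and an approximation/closedness argument (the set of $a\in A$ for which $t\mapsto\Delta(t)^{-1/2}\xi^*\alpha_t(a)$ is $L^1$-controlled is closed).

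The main work, and the expected main obstacle, is checking that this $A_0$ satisfies Rieffel's (P1)--(P3). Density of $A_0$ in $A$ is the first hurdle: one uses an approximate identity $(e_i)$ in $L^1_\Delta(G,A)$ (for instance supported near the unit, built from $\contc(G)$ and an approximate unit of $A$) and shows $\xi*e_i\to\xi$ for $\xi\in\Rel$, so that $\Rel*L^1_\Delta(G,A)$ is dense, hence so is $(\Rel*L^1_\Delta)\cdot(\Rel*L^1_\Delta)^*$ — this needs the strong continuity of $\alpha$ plus a uniform-boundedness estimate from (P1). That $A_0$ is a \Star{}subalgebra is built into its definition (closed under adjoints by construction; closed under products since a product of four factors $(\xi*\varphi)(\eta*\psi)^*(\xi'*\varphi')(\eta'*\psi')^*$ must be rewritten as a span of two-factor terms, which follows once one knows $(\eta*\psi)^*(\xi'*\varphi')\in L^1_\Delta(G,A)$-valued expressions absorb into the middle — more precisely one shows $A_0\cdot A_0\sbe A_0$ by convolution identities $\Delta^p(f*g) = (\Delta^p f)*(\Delta^p g)$). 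For $G$-invariance one uses that $\alpha_s(\xi*\varphi)$ can be re-expressed as $\xi*(\text{translate of }\varphi)$ up to modular factors, so $\alpha_s A_0\sbe A_0$. Rieffel's (P1) for $A_0$ reduces, via bilinearity and the convolution structure, to estimates on $\bbraket{\xi*\varphi}{\eta*\psi}$ expanded using the bimodule formulas and the already-established $\bbraket{\Rel}{\Rel}\sbe L^1_\Delta(G,A)$; here one expects a computation showing $\bbraket{\xi*\varphi}{\eta*\psi} = \varphi^* * \bbraket{\xi}{\eta} * \psi$ (or a close variant with modular corrections), which lands in $L^1_\Delta(G,A)$ since that space is a \Star{}algebra and an ideal-like convolution absorbs the $L^1_\Delta$ factors. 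Finally (P2) then comes for free from the Proposition already proved in the excerpt (applied to $A_0$, which satisfies (P1)), and (P3) — that $m = {}_{A^G}\braket{\xi}{\eta}$ satisfies $m\cdot A_0\sbe A_0$ and $A_0\cdot m\sbe A_0$ — should follow from the identity ${}_{A^G}\braket{\zeta_1*\varphi_1}{\zeta_2*\varphi_2}$ acting on $\zeta*\varphi$ being rewritable, via Fubini and the defining relation ${}_{A^G}\braket{\xi}{\eta}\zeta = \int_G\alpha_t(\xi\eta^*)\zeta\dd t$, as another element of the form $\zeta'*\varphi'$ with $\zeta'\in\Rel$ and $\varphi'\in L^1_\Delta(G,A)$; getting the Fubini interchange and the modular bookkeeping exactly right in this last identity is the delicate point I would budget the most care for.
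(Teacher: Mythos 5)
Your overall route is the same as the paper's: take $\Rel=A_0$ for the easy direction, and for the converse upgrade (P1) to $\bbraket{\Rel}{\Rel}\sbe L^1_\Delta(G,A)$ via the adjoint identities, build $A_0$ out of $\Rel*L^1_\Delta(G,A)$, verify Rieffel's (P1) through the formula $\bbraket{\xi*f}{\eta*g}=f^**\bbraket{\xi}{\eta}*g$, obtain (P2) from the preceding proposition, and obtain (P3) by rewriting ${}_{A^G}\braket{\xi}{\eta}\zeta$ as a convolution $\xi*\bbraket{\eta}{\zeta}$. Two of your intermediate justifications, however, do not hold together as written. First, a minor point: the convergence of $\xi*\varphi$ has nothing to do with (P1); it follows from $\|\alpha_t(\xi\varphi(t^{-1}))\|\le\|\xi\|\,\|\varphi(t^{-1})\|$ together with $\Delta^{-1/2}\varphi\in L^1(G,A)$, with no approximation of $\varphi(t^{-1})$ by elements of $\Rel$ needed.

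The more substantive gap is in your treatment of multiplicativity of $A_0$ and of Rieffel's (P1) for $A_0$. A generic element of $A_0$ is a product $(\xi*\varphi)(\eta*\psi)^*$, so the estimate on $\bbraket{\xi*\varphi}{\eta*\psi}$ only covers inner products of elements of $\tilde\Rel:=\spn\Rel*L^1_\Delta(G,A)$, not of $A_0$ itself; likewise $(\eta*\psi)^*(\xi'*\varphi')$ is an element of $A$, not an $L^1_\Delta$-valued expression that can ``absorb into the middle'', and the identity $\Delta^p(f*g)=(\Delta^pf)*(\Delta^pg)$ is not what makes the four-factor product collapse. The missing observation, which is the organizing device of the paper's proof, is that $\tilde\Rel$ is a $G$\nb-invariant dense \emph{right ideal} of $A$, via $(\xi*f)\cdot a=\xi*(f\cdot a)$ with $(f\cdot a)(t)=f(t)\alpha_t(a)\in L^1_\Delta(G,A)$ (and $\alpha_t(\xi*f)=\xi*(t\cdot f)$ for invariance). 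This gives $A_0=\spn\tilde\Rel\tilde\Rel^*\sbe\tilde\Rel$, so $A_0$ is automatically a \Star{}subalgebra and inherits (P1) and (P2) from $\tilde\Rel$, and (P3) follows cleanly from ${}_{A^G}\braket{\tilde\Rel}{\tilde\Rel}\tilde\Rel=\tilde\Rel*\bbraket{\tilde\Rel}{\tilde\Rel}\sbe\tilde\Rel*L^1_\Delta(G,A)\sbe\tilde\Rel$ combined with $A_0\sbe\tilde\Rel$, leaving none of the delicate Fubini and modular bookkeeping you were budgeting for. With that right-ideal property inserted, your sketch becomes essentially the paper's proof.
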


\begin{proof} If the action is Rieffel proper with respect to a dense \Star{}subalgebra $A_0$, we simply take $\Rel=A_0$.
Conversely, given $\Rel$ as in the proposition, we will show that $A_0={\spn}(\Rel*L^1_\Delta(G,A))\cdot (\Rel*L^1_\Delta(G,A))^*$
satisfies Rieffel's conditions (P1)\nb--(P3).  For this we will see that if  $\Rel\sbe A$ is a dense subspace of $A$ satisfying (P1), then $\bbraket{\Rel}{\Rel}\sbe L^1_\Delta(G,A)$.
Moreover, $\tilde\Rel\defeq \spn \Rel*L^1_\Delta(G,A)$ is then automatically a $G$\nb-invariant dense right ideal of $A$ also satisfying
$$\bbraket{\tilde\Rel}{\tilde\Rel}\sbe L^1_\Delta(G,A).$$
Then, $A_0\defeq \spn\tilde\Rel\tilde\Rel^*$ is a $G$\nb-invariant dense \Star{}subalgebra of $A$ contained in $\tilde\Rel$ which satisfies (P1)-(P3).

Given $\xi,\eta\in \Rel$, we have $\bbraket{\xi}{\eta}^*=\bbraket{\eta}{\xi}$ and $(\Delta^{1/2}\bbraket{\xi}{\eta})^*=\Delta^{-1/2}\bbraket{\eta}{\xi}$,
where involution is taken inside $L^1(G,A)$. Therefore,
$\bbraket{\Rel}{\Rel}\sbe L^1(G,A)\cap \Delta^{-1/2}L^1(G,A)$
is equivalent to
$\bbraket{\Rel}{\Rel}\sbe L^1(G,A)\cap \Delta^{1/2}L^1(G,A)$
and hence also equivalent to $\bbraket{\Rel}{\Rel}\sbe L^1_\Delta(G,A)$.

Now, given $f,g\in L^1_\Delta(G,A)$, we have $\bbraket{\xi*f}{\eta*g}=f^**\bbraket{\xi}{\eta}*g$.
Since $L^1_\Delta(G,A)$ is a \Star{}subalgebra of $L^1(G,A)$, it follows that $\tilde\Rel=\Rel*L^1_\Delta(G,A)$ satisfies $\bbraket{\tilde\Rel}{\tilde\Rel}\sbe L^1_\Delta(G,A)$. That $\tilde\Rel$ is a right ideal follows from the identity $(\xi*f)\cdot a=\xi*(f\cdot a)$, where $f\cdot a(t)\defeq f(t)\alpha_t(a)$. And that $\tilde\Rel$ is $G$\nb-invariant follows from $\alpha_t(\xi*f)=\xi*(t\cdot f)$, where $(t\cdot f)(s)\defeq \Delta(t)^{1/2}f(st)$.
It follows that $A_0=\spn\tilde\Rel\tilde\Rel^*\sbe \tilde\Rel$ is a $G$\nb-invariant dense \Star{}subalgebra of $A$.
Since (P1) and (hence also) (P2) hold for $\tilde\Rel$,
they also hold for $A_0\sbe \tilde\Rel$. Finally notice that
$$_{A^G}\braket{\tilde\Rel}{\tilde\Rel}\tilde\Rel=\tilde\Rel*\bbraket{\tilde\Rel}{\tilde\Rel}_{A\rtimes G}\sbe \tilde\Rel*L^1_\Delta(G,A)\sbe \tilde\Rel$$
so that $_{A^G}\braket{A_0}{A_0}A_0\sbe {}_{A^G}\braket{\tilde\Rel}{\tilde\Rel}\tilde\Rel\tilde\Rel^*\sbe \tilde\Rel\tilde\Rel^*\sbe A_0$, i.e., (P3) holds for $A_0$.
\end{proof}

One can also use the subspace $\tilde\Rel$ above, or more generally, any dense subspace $\Rel\sbe A$
satisfying
\begin{equation}\label{eq:Rel-Cont-Inv-Sub}
\bbraket{\Rel}{\Rel}\sbe L^1_\Delta(G,A)\quad\mbox{and }\Rel*L^1_\Delta(G,A)\sbe \Rel
\end{equation}
to build a pre-Hilbert $A^{G,\alpha}-A\rtimes_{\alpha,r}G$-bimodule $\F_0(\Rel)$. More precisely, if $\Rel$ is such a subspace, we can view $\Rel$ as a right pre-Hilbert module over $L^1_\Delta(G,A)\sbe A\rtimes_{\alpha,r}G$
with $(\xi,\eta)\mapsto \bbraket{\xi}{\eta}$ as the inner product and the ``convolution'' $\xi*\varphi=\int_G\Delta(t)^{-1/2}\alpha_t(\xi\varphi(t^{-1}))\, dt$ as the right action. That these are indeed well-defined operations and have the correct properties (in particular, that $\bbraket{\xi}{\xi}$ is positive in $A\rtimes_{\alpha,r}G$) follows from the same arguments as used by Rieffel in \cite{Rieffel:Proper}. Also, the same arguments show that
$$A_0^G\defeq \spn {}_{A^G}\braket{\Rel}{\Rel}=\spn\left\{\int_G^\su\alpha_t(\xi\eta^*)dt:\xi,\eta\in \Rel\right\}$$
is a \Star{}subalgebra of $\M(A)^G$, and that the multiplication in $\M(A)$ and the pairing $(\xi,\eta)\mapsto {}_{A^G}\braket{\xi}{\eta}$ give $\Rel$ the structure of a pre-Hilbert $A^G_0-L^1_\Delta(G,A)$-bimodule.
Defining then $A^G$ to be the closure of $A^G_0$ in $\M(A)$, and completing $\Rel$ with respect to the norm coming from one of the inner products, one gets a Hilbert $A^G-A\rtimes_{\alpha,r}G$-bimodule $\F(\Rel)$. We should also point out that such ideas have been already performed in \cite{Meyer:generalised_Fixed} in a slightly more general context for Hilbert modules and where the inner products $\bbraket{\xi}{\eta}$ do not necessarily lie in $L^1_\Delta(G,A)$, but only in $A\rtimes_{\alpha,r}G$ in general. We refer to \S\ref{sec:counter} for a more detailed discussion of this.

\begin{proposition}\label{prop-uniquemodule}
Let $\Rel\sbe A$ be a dense subspace satisfying~\eqref{eq:Rel-Cont-Inv-Sub}, and define $\tilde\Rel\defeq \spn\Rel*L^1_\Delta(G,A)\sbe \Rel$ and $A_0\defeq \spn\tilde\Rel\tilde\Rel^*$. Then all Hilbert bimodules $\F(\Rel)$, $\F(\tilde\Rel)$ and $\F(A_0)$ are canonically isomorphic.
\end{proposition}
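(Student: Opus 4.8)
The plan is to use that $A_0\sbe\tilde\Rel\sbe\Rel$ carry the \emph{same} left and right inner products and the same left and right actions --- namely the restrictions of the formulas displayed above --- so that the inclusions are isometric for the module norm: indeed the norm of a vector $\xi$ equals $\|\bbraket{\xi}{\xi}\|^{1/2}=\|{}_{A^G}\braket{\xi}{\xi}\|^{1/2}$, a quantity determined by the inner-product formulas and independent of the ambient subspace, so the inclusions extend to isometric morphisms $\F(A_0)\to\F(\tilde\Rel)\to\F(\Rel)$ of pre-Hilbert bimodules. The statement then reduces to two points: (a) these maps have dense range, i.e.\ $A_0$ is dense in $\Rel$ in the module norm; and (b) the coefficient algebras $A^G$ and the right ideals $\cspn\bbraket{\cdot}{\cdot}_{A\rtimes_{\alpha,r}G}$ computed from $A_0$, $\tilde\Rel$ and $\Rel$ coincide. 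Point (b) follows from (a) by continuity: the Cauchy--Schwarz estimates $\|{}_{A^G}\braket{\xi}{\eta}\|_{\M(A)}\le\|\xi\|\,\|\eta\|$ and $\|\bbraket{\xi}{\eta}\|_{A\rtimes_{\alpha,r}G}\le\|\xi\|\,\|\eta\|$ (for the module norm) turn module-norm approximations of vectors into norm approximations of the corresponding inner products.

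For (a) I would argue in two steps. First, $\tilde\Rel$ is dense in $\F(\Rel)$: pick a self-adjoint bounded approximate identity $(u_i)$ of $A\rtimes_{\alpha,r}G$ with $u_i\in\contc(G,A)\sbe L^1_\Delta(G,A)$; for $\xi\in\Rel$ one has $\xi*u_i\in\Rel*L^1_\Delta(G,A)\sbe\tilde\Rel$ and, using the identity $\bbraket{\xi*f}{\eta*g}=f^{*}*\bbraket{\xi}{\eta}*g$ recorded above,
\[
\bbraket{\xi-\xi*u_i}{\xi-\xi*u_i}=\bbraket{\xi}{\xi}-\bbraket{\xi}{\xi}*u_i-u_i*\bbraket{\xi}{\xi}+u_i*\bbraket{\xi}{\xi}*u_i\longrightarrow 0
\]
in $A\rtimes_{\alpha,r}G$ (as $\bbraket{\xi}{\xi}\in A\rtimes_{\alpha,r}G$), so $\xi*u_i\to\xi$ in the module norm. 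Second, $A_0$ is dense in $\F(\tilde\Rel)$: being a dense \Star{}subalgebra of $A$, $A_0$ contains a bounded approximate identity $(e_j)$ of $A$ with $0\le e_j\le 1$ (a standard functional-calculus fact --- approximate the relevant finitely many elements by $b_k\in A_0$, form $c=\sum_k(b_kb_k^{*}+b_k^{*}b_k)\in A_0$, and take $e=q(c)$ for a polynomial $q$ with $q(0)=0$, $0\le q\le 1$ on the spectrum of $c$, and $q(c)c$ close to $c$). Since $\tilde\Rel$ is a right ideal of $A$, $\tilde\Rel\cdot A_0\sbe A_0$, so $\zeta e_j\in A_0$ for $\zeta\in\tilde\Rel$; and from $\bbraket{\zeta}{\zeta a}=\bbraket{\zeta}{\zeta}\cdot i_A(a)$ --- where $i_A\colon A\to\M(A\rtimes_{\alpha,r}G)$ is the canonical nondegenerate embedding and $\psi\cdot i_A(a)$ denotes the function $s\mapsto\psi(s)\alpha_s(a)$ --- we get
\[
\bbraket{\zeta-\zeta e_j}{\zeta-\zeta e_j}=(1-i_A(e_j))\,\bbraket{\zeta}{\zeta}\,(1-i_A(e_j))\longrightarrow 0,
\]
since $i_A(e_j)\to 1$ strictly in $\M(A\rtimes_{\alpha,r}G)$; hence $\zeta e_j\to\zeta$ in the module norm. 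Combining the two steps yields $A_0$ dense in $\F(\Rel)$, and with (b) this shows that the isometric inclusions $\F(A_0)\to\F(\tilde\Rel)\to\F(\Rel)$ are surjective --- hence bimodule isomorphisms over the common $A^G$ and the common ideal --- and manifestly the ``obvious'' ones.

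I expect the density assertion in (a) to be the only non-routine point. The subtlety is that the module norm records \emph{only} the two inner products, so approximating $\zeta$ in the norm of $A$ by elements of $A_0$ --- which is trivial --- is useless by itself; one genuinely has to move along the module by letting approximate identities act, by convolution in the first step and by right multiplication via $i_A$ in the second, and then exploit the structural identities for $\bbraket{\cdot}{\cdot}$. A secondary thing to check is that a dense \Star{}subalgebra like $A_0$, which is not assumed hereditary, still contains a bounded approximate identity of the ambient \cstar{}algebra; this does hold, but --- unlike for dense ideals --- it warrants the brief functional-calculus argument above.
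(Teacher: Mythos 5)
Your proof is correct, and it reaches the conclusion by a slightly different route than the paper, although the key computations are the same. The paper also starts from the isometric embeddings $\F(A_0)\into\F(\tilde\Rel)\into\F(\Rel)$ and uses exactly your two approximate-unit tricks (a self-adjoint approximate unit of $A\rtimes_{\alpha,r}G$ in $\contc(G,A)$ acting by convolution, and an approximate unit of $A$ acting by right multiplication through $i_A$, via the identities \eqref{eq-formulas}), but it applies them to the inner products rather than to the vectors: it shows that the closed ideals $\cspn\bbraket{A_0}{A_0}\sbe\cspn\bbraket{\tilde\Rel}{\tilde\Rel}\sbe\cspn\bbraket{\Rel}{\Rel}$ of $A\rtimes_{\alpha,r}G$ coincide and then invokes the Rieffel correspondence \cite{Raeburn-Williams:Morita_equivalence}*{Corollary~3.33} to conclude that the embedded submodules are everything. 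You instead prove surjectivity directly, by showing that $A_0$ is dense in $\F(\Rel)$ for the module norm; this makes the argument self-contained (no correspondence theorem) and records the slightly stronger fact that $A_0$ is module-norm dense, which is essentially what Remark~\ref{rem-fix} exploits as well. The price is your functional-calculus lemma that a dense \Star{}subalgebra contains a bounded approximate unit for $A$; the paper sidesteps this by taking a bounded \emph{right} approximate unit $(e_i)$ of $A$ inside the dense left ideal $\tilde\Rel^*$, so that $\zeta e_i\in\tilde\Rel\tilde\Rel^*\sbe A_0$ immediately — you could have done the same and shortened your step two. Two small points to tighten: the inclusion $\tilde\Rel\cdot A_0\sbe A_0$ is not literally the right-ideal property but follows from $\tilde\Rel\tilde\Rel\sbe\tilde\Rel A\sbe\tilde\Rel$ together with $A_0=\spn\tilde\Rel\tilde\Rel^*$; and your transfer of density to the left inner products in point (b) tacitly uses that the left and right module norms agree, i.e.\ $\|{}_{A^G}\braket{\xi}{\xi}\|=\|\bbraket{\xi}{\xi}\|$ — this is part of Rieffel's bimodule theory and is also implicit in the paper's construction of $\F(\Rel)$, but it deserves a word.
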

\begin{proof}
The inclusions $A_0\sbe \tilde\Rel\sbe \Rel$ extend to embeddings $\F(A_0)\into \F(\tilde\Rel)\into \F(\Rel)$ of Hilbert $A\rtimes_{\alpha,r}G$-modules. To show that these embeddings are isomorphisms, it is enough (by the Rieffel Correspondence Theorem \cite{Raeburn-Williams:Morita_equivalence}*{Corollary~3.33}) to show that the ideals
$$\cspn\bbraket{A_0}{A_0}\sbe \cspn\bbraket{\tilde\Rel}{\tilde\Rel}\sbe \cspn\bbraket{\Rel}{\Rel}$$
of $A\rtimes_{\alpha,r}G$ coincide.
To see this  first observe from  a simple computation that
\begin{equation}\label{eq-formulas}
\bbraket{\xi*f}{\eta*g}=f^**\bbraket{\xi}{\eta}*g\quad\text{and}\quad \bbraket{\xi a}{\eta b}=i_A(a)^*\braket{\xi}{\eta}i_A(b)
\end{equation}
for all $\xi,\eta\in \Rel$, $f,g\in \contc(G,A)$, and  $a,b\in A$, where $\big(i_A(a)f\big)(t)\defeq af(t)$ and $\big(fi_A(a)\big)(t)\defeq f(t)\alpha_t(a)$.
Note that the latter formulas determine the canonical inclusion $i_A:A\to \M(A\rtimes_{\alpha,\red}G)$.
Since $\tilde\Rel^*$ is a dense left ideal of $A$, there exists a bounded right approximate unit $(e_i)$ of $A$ in $\tilde\Rel^*$.
 It follows then from the second equation in (\ref{eq-formulas}) that
$$\bbraket{\xi e_i}{\eta e_i}=i_A(e_i^*)\bbraket{\xi}{\eta} i_A(e_i)\to \bbraket{\xi}{\eta},$$
for all $\xi,\eta\in \tilde\Rel$, which shows that $\cspn\bbraket{A_0}{A_0}= \cspn\bbraket{\tilde\Rel}{\tilde\Rel}$.
Choosing a self-adjoint approximate unit $(\varphi_i)$ of $A\rtimes_{\alpha,\red}G$ in $C_c(G,A)\subseteq L^1_\Delta(G,A)$,
we get
$$\bbraket{\xi*\varphi_i}{\eta*\varphi_i}=\varphi_i*\bbraket{\xi}{\eta}*\varphi_i\to \bbraket{\xi}{\eta}$$
for all $\xi,\eta\in \Rel$, which proves $\cspn\bbraket{\tilde\Rel}{\tilde\Rel}= \cspn\bbraket{\Rel}{\Rel}$.
\end{proof}

\begin{remark}\label{rem-fix}
Let $(A,\alpha)$ be a $G$\nb-algebra and suppose that $A_0 \subseteq A$ is a $G$\nb-invariant dense \Star{}subalgebra of $A$
which satisfies Rieffel's conditions \textup{(P1) -- (P3)}. It follows not necessarily from these conditions that
$A_0*L^1_\Delta(G,A)\subseteq A_0$, hence it is not necessarily true that $A_0$
satisfies~\eqref{eq:Rel-Cont-Inv-Sub}. However, it is clear that $\tilde\Rel:= A_0*L^1_\Delta(G,A)$ must be contained in
the intersection of Rieffel's module $\F(A_0)=\overline{A_0}$ with $A$. Using approximate units $(\varphi_i)$ in $C_c(G,A)$ as
above, it follows that we get $\F(A_0)=\F(\tilde\Rel)=\F(\tilde\Rel \tilde\Rel^*)$, so we see that applying our procedure to
$\Rel:=A_0$ leads to the original  Hilbert bimodule $\F(A_0)$ of Rieffel's and the corresponding fixed-point algebra.
\end{remark}

The following facts,  which apparently have not been noticed before in the literature, are now easy consequences of our characterisation of Rieffel proper actions and will be used frequently in this paper. For notation, if $(A,\alpha)$ is a $G$\nb-algebra and if $\Rel\subseteq A$
is as in  Proposition \ref{prop:CharacterizationRieffelProperActions}, then we shall say that $\alpha:G\to \Aut(A)$
is {\em Rieffel proper with respect to $\Rel\subseteq A$.}

\begin{corollary}\label{cor-functorial}
Suppose that $\alpha:G\to\Aut(A)$ is Rieffel proper with respect to the dense subspace $\Rel_A\subseteq A$.
Let $(B,\beta)$ be another $G$\nb-algebra and let $\Phi:A\to \M(B)$ be a nondegenerate $G$\nb-equivariant \Star{}homomorphism.
Then $\beta:G\to \Aut(B)$ is  Rieffel proper with respect $\Rel_B:=\Phi(\Rel_A)B\subseteq B$.
Moreover, there is a canonical isomorphism of Hilbert $B\rtimes_{\beta,r} G$-modules:
$$\F(\Rel_B)\cong \F(\Rel_A)\otimes_{\Phi\rtimes_{r} G}B\rtimes_{\beta,r} G,$$
where $\Phi\rtimes_r G\colon A\rtimes_{\alpha,r}G\to \M(B\rtimes_{\beta, r}G)$ denotes the (nondegenerate) \Star{}homomorphism associated to $\Phi$.
In particular, if $A$ is saturated with respect to $\Rel_A$, then $B$ is saturated with respect to $\Rel_B$.
\end{corollary}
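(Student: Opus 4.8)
The plan is to deduce Rieffel properness of $\beta$ from Proposition~\ref{prop:CharacterizationRieffelProperActions} and then produce the module isomorphism by one explicit map. First I would check that $\Rel_B=\spn\{\Phi(\xi)b:\xi\in\Rel_A,\ b\in B\}$ is a dense subspace of $B$ satisfying~(P1). Density is immediate: $\Phi$ is norm-continuous and $\Rel_A$ is norm-dense in $A$, so $\Phi(\Rel_A)B$ has the same closed linear span as $\Phi(A)B$, which equals $B$ since $\Phi$ is nondegenerate. For~(P1), fix $\xi,\eta\in\Rel_A$ and $b,c\in B$; since $\Phi$ is a $G$\nb-equivariant \Star{}homomorphism (extended to multiplier algebras where needed),
\[ (\Phi(\xi)b)^*\beta_t\big(\Phi(\eta)c\big)=b^*\Phi\big(\xi^*\alpha_t(\eta)\big)\beta_t(c). \]
Here $t\mapsto b^*\Phi(\xi^*\alpha_t(\eta))$ is the image of $t\mapsto\xi^*\alpha_t(\eta)\in L^1(G,A)$ under the bounded linear map $a\mapsto b^*\Phi(a)$ from $A$ to $B$, hence lies in $L^1(G,B)$, and multiplying pointwise by the norm-bounded strongly continuous function $t\mapsto\beta_t(c)$ keeps it in $L^1(G,B)$; the same with an extra factor $\Delta(t)^{-1/2}$ gives $t\mapsto\Delta(t)^{-1/2}(\Phi(\xi)b)^*\beta_t(\Phi(\eta)c)\in L^1(G,B)$. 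By sesquilinearity~(P1) holds on all of $\Rel_B$, so $\beta$ is Rieffel proper with respect to $\Rel_B$.

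For the Hilbert module statement I would build a unitary $\Psi\colon F\to\F(\Rel_B)$ with $F\defeq\F(\Rel_A)\otimes_{\Phi\rtimes_r G}(B\rtimes_{\beta,r}G)$. Recall $\F(\Rel_A)$ is the completion of $\tilde\Rel_A\defeq\spn\Rel_A*L^1_\Delta(G,A)$, into which $\Rel_A$ maps densely, and similarly for $B$. Since $i_B(B)\contc(G,B)$ is dense in $B\rtimes_{\beta,r}G$, the elementary tensors $\xi\otimes i_B(b)g$ with $\xi\in\Rel_A$, $b\in B$, $g\in\contc(G,B)$ span a dense subspace of $F$; as $(\Phi(\xi)b)*g\in\Rel_B*L^1_\Delta(G,B)\subseteq\tilde\Rel_B$, this element lies in $\F(\Rel_B)$, so I set $\Psi(\xi\otimes i_B(b)g)\defeq(\Phi(\xi)b)*g$. (Note $(\Phi(\xi)b)*g$ depends only on $\xi$ and on the function $i_B(b)g$, so this assignment is bilinear and extends to a linear map on the algebraic tensor product.)

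Well-definedness and isometry of $\Psi$ both follow from the single identity
\[ \bbraket{(\Phi(\xi)b)*g}{(\Phi(\eta)c)*h}=g^**\bbraket{\Phi(\xi)b}{\Phi(\eta)c}*h=\big(i_B(b)g\big)^*\,(\Phi\rtimes_r G)(\bbraket{\xi}{\eta})\,\big(i_B(c)h\big), \]
whose first equality is the first formula of~\eqref{eq-formulas} applied inside $B$, and whose second follows from the pointwise identity $\bbraket{\Phi(\xi)b}{\Phi(\eta)c}(t)=b^*\Phi\big(\bbraket{\xi}{\eta}(t)\big)\beta_t(c)=\big(i_B(b)^*(\Phi\rtimes_r G)(\bbraket{\xi}{\eta})i_B(c)\big)(t)$ using $G$\nb-equivariance of $\Phi$ and that $(\Phi\rtimes_r G)(\bbraket{\xi}{\eta})$ acts on $\contc(G,B)$ by convolution with $t\mapsto\Phi(\bbraket{\xi}{\eta}(t))$. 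The right-hand side of the display is exactly the inner product of $\xi\otimes i_B(b)g$ and $\eta\otimes i_B(c)h$ in $F$, so $\Psi$ descends to an isometry, and it is $(B\rtimes_{\beta,r}G)$\nb-linear by associativity of convolution. Its range contains $\Rel_B*\contc(G,B)$, which is dense in $\F(\Rel_B)$ by the approximate-unit argument in the proof of Proposition~\ref{prop-uniquemodule}, so $\Psi$ is a unitary, giving the asserted isomorphism. The saturation claim is then formal: if $A$ is saturated with respect to $\Rel_A$ then $\F(\Rel_A)$ is full over $A\rtimes_{\alpha,r}G$, hence by nondegeneracy of $\Phi\rtimes_r G$ the module $F$ is full over $B\rtimes_{\beta,r}G$, and transporting this along $\Psi$ shows that $B$ is saturated with respect to $\Rel_B$.

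The step I expect to be the main obstacle is the displayed inner product identity, i.e.\ checking at once that $\Psi$ is well defined and isometric: this forces one to keep track of the multipliers $\Phi(\xi)\in\M(B)$, the extensions of $\beta_t$ and $\Phi$ to multiplier algebras, and the integrated form $\Phi\rtimes_r G$ in a mutually compatible way, and to confirm that the convolution and module-action formulas of Rieffel's construction survive this bookkeeping. The remaining ingredients---the dense embedding $\Rel_B\into\F(\Rel_B)$ via approximate units, and the fullness of internal tensor products over nondegenerate homomorphisms---are routine and run parallel to arguments already in the paper.
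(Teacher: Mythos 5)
Your argument is correct, and its first half is exactly the paper's: properness of $\beta$ with respect to $\Rel_B=\Phi(\Rel_A)B$ is verified by the same pointwise identity $\bbraket{\Phi(\xi)b}{\Phi(\eta)c}(t)=b^*\Phi\bigl(\bbraket{\xi}{\eta}(t)\bigr)\beta_t(c)$, which puts these inner products into $L^1_\Delta(G,B)$ and invokes Proposition~\ref{prop:CharacterizationRieffelProperActions}. Where you genuinely diverge is the second half: the paper does not construct the isomorphism $\F(\Rel_B)\cong \F(\Rel_A)\otimes_{\Phi\rtimes_r G}(B\rtimes_{\beta,r}G)$ by hand, but obtains it (and the saturation statement) as a special case of Corollary~7.1 of Meyer's paper on generalised fixed-point algebras, using the identification $B\cong A\otimes_\Phi B$ of $G$\nb-equivariant Hilbert $B$-modules and the fact that Rieffel proper actions are Exel--Meyer proper (as explained in \S\ref{sec:counter}). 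Your route instead builds the unitary $\Psi(\xi\otimes i_B(b)g)=(\Phi(\xi)b)*g$ directly and checks the inner-product identity via~\eqref{eq-formulas} and the formula for $i_B(b)^*(\Phi\rtimes_r G)(f)i_B(c)$; this is self-contained, stays entirely within the $L^1$-picture of \S\ref{sec-Rieffel-proper}, and makes the density and fullness arguments explicit (the only small point left tacit is extending the identity $i_B(b)^*(\Phi\rtimes_r G)(f)i_B(c)=[t\mapsto b^*\Phi(f(t))\beta_t(c)]$ from $f\in\contc(G,A)$ to $f\in L^1(G,A)$ by continuity, which is routine). What the citation buys the paper is brevity and a seamless link to the Exel--Meyer framework it develops anyway in \S\ref{sec:counter} (in particular the compatibility of $\F(\Rel_B)$ with Meyer's module), while your computation has the virtue of not presupposing that machinery; both yield the saturation claim by the same formal fullness argument using nondegeneracy of $\Phi\rtimes_r G$.
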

\begin{proof} Since $\Phi$ is nondegenerate, $\Rel_B$ is dense in $B$.
Let $x,y\in \Rel_A$, $b,c\in B$ and let $\xi=\Phi(x)b$ and $\eta=\Phi(y)c$.
Then $s\mapsto \bbraket{x}{y}(s)\in L^1_\Delta(G,A)$, and hence we
get $s\mapsto \bbraket{\xi}{\eta}(s)=b\Phi(\bbraket{x}{y}(s))\beta_s(c)\in L^1_\Delta(G,B)$.
The final assertion is a particular case of Corollary~7.1 in \cite{Meyer:generalised_Fixed} by using the canonical isomorphism $B\cong A\otimes_\Phi B$ as $G$\nb-equivariant Hilbert $B$-modules and the fact that Rieffel proper actions are proper in the sense of Exel-Meyer, as we shall explain in
\S\ref{sec:counter} below.
\end{proof}

\begin{corollary}\label{cor-tensor}
Suppose that $\alpha:G\to \Aut(A)$ and $\beta:H\to \Aut(B)$ are Rieffel proper with respect to $\Rel_A\subseteq A$ and $\Rel_B\subseteq B$, respectively.
Then $\alpha\otimes\beta: G\times H\to \Aut(A\otimes_\nu B)$ is proper with respect to  $\Rel_A\odot\Rel_B\subseteq A\otimes_\nu B$, where ``$\otimes_\nu$'' denotes either the maximal or the minimal tensor product of $A$ with $B$.
\end{corollary}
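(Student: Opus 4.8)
The plan is to apply the characterisation of Proposition~\ref{prop:CharacterizationRieffelProperActions}: it suffices to exhibit one dense subspace of $A\otimes_\nu B$ satisfying condition~\textup{(P1)} for the product action $(\alpha\otimes\beta)_{(s,t)}=\alpha_s\otimes\beta_t$ of $G\times H$, and the natural candidate is the algebraic tensor product $\Rel:=\Rel_A\odot\Rel_B$. First I would dispatch the easy points: $\alpha\otimes\beta$ is strongly continuous, being so on the dense subspace $A\odot B$ and acting isometrically on $A\otimes_\nu B$; and $\Rel$ is dense in $A\otimes_\nu B$, since $\Rel_A\odot\Rel_B$ is $\|\cdot\|_\nu$-dense in $A\odot B$ by the cross-norm estimate $\|a\otimes b\|_\nu\le\|a\|\,\|b\|$ (valid for both $\nu=\min$ and $\nu=\max$), while $A\odot B$ is dense in $A\otimes_\nu B$ by definition.

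The substance is the verification of~\textup{(P1)} for $\Rel$. Since the pairing $(\xi,\eta)\mapsto\bbraket{\xi}{\eta}$ is additive in each variable, it is enough to treat elementary tensors $\xi=x\otimes y$ and $\eta=x'\otimes y'$ with $x,x'\in\Rel_A$ and $y,y'\in\Rel_B$. Writing $\Delta$ for the modular function of $G\times H$, which factors as $\Delta(s,t)=\Delta_G(s)\Delta_H(t)$ because Haar measure on $G\times H$ is the product measure, a direct computation gives
$$\bbraket{\xi}{\eta}(s,t)=\bigl(\Delta_G(s)^{-1/2}x^*\alpha_s(x')\bigr)\otimes\bigl(\Delta_H(t)^{-1/2}y^*\beta_t(y')\bigr),$$
so that $\bbraket{\xi}{\eta}$ is the exterior tensor product $(s,t)\mapsto f(s)\otimes g(t)$ of the functions $f=\bbraket{x}{x'}\in L^1_\Delta(G,A)$ and $g=\bbraket{y}{y'}\in L^1_\Delta(H,B)$ supplied by the Rieffel properness of $(A,\alpha)$ and $(B,\beta)$. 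I would then record the one elementary lemma needed, that the exterior tensor product restricts to a bilinear map $L^1(G,A)\times L^1(H,B)\to L^1(G\times H,A\otimes_\nu B)$ with $\|f\otimes g\|_1\le\|f\|_1\,\|g\|_1$: this holds on $\contc(G,A)\times\contc(H,B)$ by the cross-norm estimate together with Fubini, and extends by continuity. Applied with $f=\bbraket{x}{x'}$ and $g=\bbraket{y}{y'}$ it gives $\bbraket{\xi}{\eta}\in L^1(G\times H,A\otimes_\nu B)$; applied with $f=\Delta_G^{1/2}\bbraket{x}{x'}\in L^1(G,A)$ and $g=\Delta_H^{1/2}\bbraket{y}{y'}\in L^1(H,B)$ (which are integrable because $\bbraket{x}{x'}\in\Delta^{-1/2}L^1(G,A)$ and likewise for $y,y'$) it gives $\Delta^{1/2}\bbraket{\xi}{\eta}\in L^1(G\times H,A\otimes_\nu B)$, i.e.\ $\bbraket{\xi}{\eta}\in\Delta^{-1/2}L^1(G\times H,A\otimes_\nu B)$. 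Taking finite sums then handles a general pair $\xi,\eta\in\Rel$, so $\Rel_A\odot\Rel_B$ satisfies~\textup{(P1)} and Proposition~\ref{prop:CharacterizationRieffelProperActions} yields that $\alpha\otimes\beta$ is Rieffel proper with respect to it.

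I do not anticipate a genuine obstacle here: once Proposition~\ref{prop:CharacterizationRieffelProperActions} is in hand, properness of the tensor product is essentially automatic, and what remains are formalities --- the cross-norm estimate holding uniformly for $\otimes_{\min}$ and $\otimes_{\max}$, and the fact that an exterior tensor product of $L^1$-functions is again $L^1$, both of which reduce to checking things on $\contc$. The only conceptual point worth flagging is that the argument genuinely exploits the freedom, afforded by Proposition~\ref{prop:CharacterizationRieffelProperActions}, to certify properness using a dense \emph{subspace} and the single condition \textup{(P1)}: the given $\Rel_A,\Rel_B$ need not be \Star{}subalgebras, and one certainly does not want to check Rieffel's full list \textup{(P1)--(P3)} for $\Rel_A\odot\Rel_B$ directly. (The same computation moreover identifies $\F(\Rel_A\odot\Rel_B)$ with an exterior tensor product of $\F(\Rel_A)$ and $\F(\Rel_B)$, and the associated generalised fixed-point algebra with a tensor product of $A^{G,\alpha}$ and $B^{H,\beta}$, though that is not part of the statement.)
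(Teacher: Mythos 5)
Your proposal is correct and follows essentially the same route as the paper: the paper's proof consists precisely of the observation that $\bbraket{\xi_1\otimes\eta_1}{\xi_2\otimes\eta_2}(s,h)=\bbraket{\xi_1}{\xi_2}(s)\cdot\bbraket{\eta_1}{\eta_2}(h)$ lies in $L^1_\Delta(G\times H, A\otimes_\nu B)$, which is exactly your factorisation into an exterior tensor product of $L^1_\Delta$-functions. You merely spell out the routine details (density of $\Rel_A\odot\Rel_B$, the cross-norm estimate, and the $L^1$-bound for exterior tensor products) that the paper leaves implicit.
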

\begin{proof} If $\xi_1,\xi_2\in \Rel_A$ and $\eta_1,\eta_2\in \Rel_B$, then
$$\bbraket{\xi_1\otimes\eta_1}{\xi_2\otimes \eta_2}(s,h)=\bbraket{\xi_1}{\xi_2}(s)\cdot\bbraket{\eta_1}{\eta_2}(h)$$
lies in $L^1_\Delta(G\times H, A\otimes_\nu B)$.
\end{proof}

\begin{corollary}\label{cor-lift}
 Suppose that $N$ is a compact normal subgroup of $G$ and let $\tilde\alpha: G/N\to \Aut(A)$ be Rieffel proper
with respect to $\Rel_A\subseteq A$.
Then the inflated action $\alpha:=\inf\tilde\alpha:G\to \Aut(A)$ is also Rieffel proper with respect to
$\Rel_A$.
\end{corollary}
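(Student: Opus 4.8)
The plan is to verify condition (P1) of Proposition~\ref{prop:CharacterizationRieffelProperActions} for the inflated action $\alpha$ with respect to the \emph{same} dense subspace $\Rel_A$; by that proposition this is all that is required. The only tool needed is Weil's integration formula for the compact normal subgroup $N\trianglelefteq G$.

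First I would record the elementary facts. By definition $\alpha_t=\tilde\alpha_{tN}$, so for every $a\in A$ the map $t\mapsto\alpha_t(a)$ is constant on cosets of $N$ and factors through the quotient map $q\colon G\to G/N$. Since $N$ is compact, the modular function $\Delta$ of $G$ is trivial on $N$ (a continuous homomorphism from a compact group into $\R_{>0}$ is trivial), hence $\Delta$ is constant on cosets of $N$ as well; it is standard---and a short computation with Weil's formula---that the induced homomorphism on $G/N$ is the modular function $\Delta_{G/N}$ of $G/N$, i.e.\ $\Delta(t)=\Delta_{G/N}(tN)$ for all $t\in G$. Normalising the Haar measure of $N$ to have total mass $1$, Weil's formula for a function $f$ on $G$ that is constant on cosets of $N$ collapses to $\int_G f(t)\dd t=\int_{G/N}\bar f(tN)\dd(tN)$, where $\bar f$ is the function induced by $f$ on $G/N$ and $\dd(tN)$ the corresponding quotient measure.

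The main step is then immediate. Fix $\xi,\eta\in\Rel_A$. By the above, the two functions $t\mapsto\xi^*\alpha_t(\eta)$ and $t\mapsto\Delta(t)^{-1/2}\xi^*\alpha_t(\eta)$ are continuous and constant on cosets of $N$, being the pullbacks along $q$ of $tN\mapsto\xi^*\tilde\alpha_{tN}(\eta)$ and $tN\mapsto\Delta_{G/N}(tN)^{-1/2}\xi^*\tilde\alpha_{tN}(\eta)$ on $G/N$, respectively. Condition (P1) for the Rieffel proper action $\tilde\alpha$ with respect to $\Rel_A$ says precisely that these two functions lie in $L^1(G/N,A)$, and applying the collapsed Weil formula to their norms transports integrability back to $G$, so that $\bbraket{\xi}{\eta}\in L^1(G,A)\cap\Delta^{-1/2}L^1(G,A)$. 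As $\Rel_A$ is dense in $A$, Proposition~\ref{prop:CharacterizationRieffelProperActions} concludes that $\alpha$ is Rieffel proper with respect to $\Rel_A$.

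I do not expect a real obstacle. The one point worth care is that compactness of $N$ is used essentially, and twice: once so that $\Delta$ descends to $\Delta_{G/N}$ on $G/N$ (so that the $\Delta^{-1/2}$\nb-twisted function also factors through $q$), and---more importantly---so that the pullback along $q$ of an $L^1$\nb-function on $G/N$ is again integrable on $G$. For non-compact $N$ the latter fails already for nonzero positive functions, and the statement itself is then false.
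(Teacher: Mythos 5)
Your argument is correct and is essentially the paper's own proof: the paper simply invokes the fact that a function on $G/N$ is integrable if and only if its pullback along $q\colon G\to G/N$ is integrable over $G$, which is exactly your Weil-formula step. Your additional observations---that $\Delta_G$ is trivial on the compact subgroup $N$ and descends to $\Delta_{G/N}$, so that the $\Delta^{-1/2}$-twisted inner products also factor through $q$---just make explicit a detail the paper leaves tacit, and they are verified correctly.
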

\begin{proof} This follows immediately from the fact that in this situation a function  $f: G/N\to A$ is integrable if and
only if the function $f\circ q$ is integrable over $G$, when $q:G\to G/N$ denotes the quotient map.
\end{proof}

\section{Properness of dual actions on twisted group algebras}\label{sec-twisted}

Recall that for any {\bf abelian} locally compact group $G$ and any Borel 2-cocycle $\om \in Z^2(G,\T)$ the twisted group algebra
$C^*(G,\om)$ is a \cstar{}completion of the Banach algebra $L^1(G,\om)$ consisting of all $L^1$-functions on
$G$ with convolution and involution  twisted by $\om$ as follows:
$$ f*_\om g(t)=\int_G f(s) g(t-s)\om(s, t-s)\, ds\quad \text{and}\quad f^*(s)=\overline{\om(s, -s) f(-s)}.$$
The same convolution formula defines a  \Star{}representation
 $$\lambda_\om: L^1(G,\om)\to \mathcal B(L^2(G)); \quad \lambda_\om(f)\xi=f*_\om\xi$$
(which makes  sense  for $\xi\in L^1(G)\cap L^2(G)$) and  then
$$C^*(G,\om):=\overline{\lambda_\om(L^1(G,\om))}\subseteq \mathcal B(L^2(G)).$$
There is a canonical dual action $\widehat{\om}:\widehat{G}\to\Aut(C^*(G,\om))$ of the dual group $\widehat{G}$
on $C^*(G,\om)$ which is given on the dense subalgebra $L^1(G,\om)$ by
$$\widehat{\om}_{\chi}(f)=\chi\cdot f,\quad \forall \chi\in \widehat{G}, f\in L^1(G,\om).$$
We want to show in this section that this action is always Rieffel proper.

For a  detailed study of twisted group algebras of abelian groups we refer to the
paper \cite{Bag-Klep}.  Note that two cocycles $\om$ and $\om'$ are called {\em similar} (or cohomologous) if
there exists a Borel function $c:G\to\T$ such that $\om'(s,t)=c(s)c(t)\overline{c(st)}\om(s,t)$ for all $s,t\in G$.
In this case the mapping $f\mapsto c\cdot f$ (pointwise multiplication) on $L^1$-functions extends to
an isomorphism $C^*(G,\om)\cong C^*(G,\om')$ which commutes with the dual actions.
Thus for our purposes it is enough to fix any representative of $\om$ under similarity, or, equivalently,
to look at classes in $H^2(G,\T)=Z^2(G,\T)/\sim$.

If $\om\in Z^2(G,\T)$, then there is a continuous homomorphism $h_\om:G\to \widehat{G}$ given by
$$h_\om(s)(t)=\om(s,t)\overline{\om(t,s)}.$$
It is shown in \cite{Bag-Klep} that two cocycles $\om$ and $\om'$ on the abelian group $G$ are similar
if and only if $h_\om=h_{\om'}$. Moreover, if
$$S_\om:=\ker h_\om=\{ s\in G: \om(s,t)=\om(t,s)\;\forall t\in G\}$$
denotes the {\em symmetrizer group} of $\om$, then Baggett and Kleppner show in \cite{Bag-Klep}*{Theorem 3.1}
that $\om$ is always similar
to a cocycle inflated  from some cocycle $\tilde\om$ on $G/S_\om$.

In what follows, we shall need the following basic fact on computing the cohomology group $H^2(G,\T)=Z^2(G,\T)/\sim$
for abelian groups $G$, which is actually a very special case of Mackey's formula \cite{Mackey:UnitaryI}*{Proposition 9.6}
together with \cite{Kleppner:Multipliers}*{Propositions 1.4 and 1.6}.

\begin{lemma}\label{lem-1} Suppose $G=H\times N$ is the direct product of the abelian locally compact groups $H$ and $N$
such that one of them is locally euclidean.
Then every cocycle $\om \in Z^2(G,\T)$ is similar to a product $\om_H\cdot\om_\eta\cdot \om_N$, where $\om_H$ and $\om_N$
are the restrictions of $\om $ to $H$ and $N$, respectively, and $\eta: H\times N\to \T$ is a continuous bi-character
and $\om_{\eta}\big((h_1, n_1), (h_2, n_2)\big)=\eta(h_1, n_2)$.
\end{lemma}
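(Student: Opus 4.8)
The plan is to reduce the statement to a cohomological computation over $H\times N$ and invoke the cited results of Mackey and Kleppner. First I would recall that, since $G = H \times N$, a $2$-cocycle $\om \in Z^2(G,\T)$ restricts to cocycles $\om_H \in Z^2(H,\T)$ and $\om_N \in Z^2(N,\T)$, and that the product $\om_H \cdot \om_N$ (interpreted via the projections $G \to H$ and $G \to N$) is an inflated cocycle on $G$. The key point is to understand the ``cross term'' $\om / (\om_H \cdot \om_N)$. The natural candidate for the cohomology group $H^2(H\times N,\T)$ is given by a Künneth-type formula: it decomposes as $H^2(H,\T) \oplus H^2(N,\T) \oplus \mathrm{Bil}(H,N)$, where $\mathrm{Bil}(H,N)$ denotes the group of continuous bi-characters $\eta \colon H \times N \to \T$, with $\eta$ contributing the cocycle $\om_\eta\big((h_1,n_1),(h_2,n_2)\big) = \eta(h_1,n_2)$. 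This is exactly the content of Mackey's formula \cite{Mackey:UnitaryI}*{Proposition 9.6} combined with \cite{Kleppner:Multipliers}*{Propositions 1.4 and 1.6}, under the standing hypothesis that one of $H$, $N$ is locally euclidean (which guarantees the relevant vanishing/splitting of the higher cohomology that makes the Künneth sequence split cleanly).

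Concretely, I would argue as follows. Replacing $\om$ by a similar cocycle, we may first assume $\om_H$ and $\om_N$ are normalised. Form $\om' \defeq \om \cdot (\om_H)^{-1} \cdot (\om_N)^{-1}$ (products and inverses of $\T$-valued cocycles), which is again a cocycle in $Z^2(G,\T)$ whose restrictions to both $H$ and $N$ are trivial (cohomologically, in fact literally equal to $1$ after the normalisation). By Mackey--Kleppner, any such cocycle — one that restricts trivially to each factor — is similar to one of the form $\om_\eta$ for a unique continuous bi-character $\eta \colon H\times N \to \T$; this is the ``bi-character part'' of the Künneth decomposition. Since similarity is an equivalence relation compatible with multiplication by fixed cocycles, tracing back through the definition of $\om'$ gives that $\om$ is similar to $\om_H \cdot \om_\eta \cdot \om_N$, as claimed. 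Finally, one checks directly from the cocycle identity that $\om_H \cdot \om_\eta \cdot \om_N$ is indeed a cocycle and that the three factors are the asserted restrictions (the restriction of $\om_\eta$ to either factor is trivial, so the restrictions of the product to $H$ and $N$ are $\om_H$ and $\om_N$ respectively).

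The main obstacle is not any computation but rather correctly citing and applying the Mackey--Kleppner machinery: one must verify that the hypothesis ``one of $H$, $N$ is locally euclidean'' is precisely what is needed for the cross term to be represented by an honest continuous bi-character (rather than a possibly more complicated class), and that no Borel-versus-continuous subtleties intervene. In particular, Kleppner's results are what upgrade the a priori Borel bi-character appearing in Mackey's formula to a continuous one and pin down uniqueness of $\eta$; I would make sure to invoke Propositions 1.4 and 1.6 of \cite{Kleppner:Multipliers} for exactly this upgrade. Once this identification of $H^2(H\times N,\T)$ with $H^2(H,\T)\oplus H^2(N,\T)\oplus\mathrm{Bil}(H,N)$ is in hand, the lemma is immediate.
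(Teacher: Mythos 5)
Your proposal is correct and follows essentially the same route as the paper, which itself gives no argument beyond citing Mackey's formula \cite{Mackey:UnitaryI}*{Proposition 9.6} together with \cite{Kleppner:Multipliers}*{Propositions 1.4 and 1.6}; your sketch simply spells out the standard way of assembling these citations (splitting off the restrictions $\om_H$, $\om_N$ and representing the remaining cross term by a continuous bi-character, with the locally euclidean hypothesis ensuring continuity via Kleppner).
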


We also need the following well-known fact:

\begin{lemma}\label{lem-2} Suppose that $K$ is a compact abelian group. Then every cocycle $\om\in Z^2(K,\T)$
is similar to a cocycle inflated from a finite quotient $K/N$ of $K$.
\end{lemma}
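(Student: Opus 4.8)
The plan is to push everything through the symmetrizer machinery recalled above. Given $\om\in Z^2(K,\T)$, set $N\defeq S_\om=\ker h_\om$. Since $h_\om\colon K\to\widehat K$ is continuous, $N$ is a closed subgroup of $K$, so $K/N$ is again a compact abelian group. By the Baggett--Kleppner theorem \cite{Bag-Klep}*{Theorem~3.1}, $\om$ is similar to a cocycle inflated from a cocycle $\tilde\om$ on $K/N$. Hence it suffices to prove that $K/N$ is finite.

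For this I would first check that $\tilde\om$ has trivial symmetrizer group. Let $q\colon K\to K/N$ be the quotient map, so that $\om$ is similar to the inflation $q^*\tilde\om$ of $\tilde\om$ along $q$. Since $h$ is a similarity invariant we get $h_\om=h_{q^*\tilde\om}$, and a direct computation from the defining formula gives $h_{q^*\tilde\om}(s)=h_{\tilde\om}(q(s))\circ q$ for $s\in K$. As $q$ is surjective, pulling back characters along $q$ is injective, so $\ker h_{q^*\tilde\om}=q^{-1}(\ker h_{\tilde\om})$. Comparing with $\ker h_\om=N=q^{-1}(\{e\})$ and using that $q^{-1}$ is injective on subgroups of $K/N$ forces $\ker h_{\tilde\om}=\{e\}$; that is, the continuous homomorphism $h_{\tilde\om}\colon K/N\to\widehat{K/N}$ is injective.

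The last step is Pontrjagin duality: as $K/N$ is compact, $\widehat{K/N}$ is discrete, so $\ker h_{\tilde\om}=h_{\tilde\om}^{-1}(\{e\})$ is an open subgroup of $K/N$. Being trivial, this makes $\{e\}$ open in $K/N$, hence $K/N$ is discrete; and a compact discrete group is finite. Therefore $\om$ is similar to a cocycle inflated from the finite quotient $K/N$, as claimed.

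I do not expect a serious obstacle here, since the one genuinely non-trivial input --- that $\om$ is inflated from $K/S_\om$ --- is exactly Baggett and Kleppner's theorem, which we are free to quote. The only points needing a little care are the bookkeeping identity $\ker h_{q^*\tilde\om}=q^{-1}(\ker h_{\tilde\om})$ (equivalently, that passing to $K/S_\om$ really does kill the symmetrizer) and the elementary topological fact that a compact subgroup of a discrete group is finite; if one instead wished to avoid Baggett--Kleppner altogether, one would have to reprove that theorem, which I would not attempt within this lemma.
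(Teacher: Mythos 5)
Your proof is correct and follows essentially the same route as the paper: the Baggett--Kleppner theorem combined with the fact that the dual of a compact group is discrete. The paper is just slightly more direct --- it observes that $S_\om=\ker h_\om$ is already open in $K$ (being the preimage of the identity under the continuous map $h_\om$ into the discrete group $\widehat K$), hence of finite index, so the detour through verifying that $\ker h_{\tilde\om}$ is trivial on the quotient is not needed.
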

\begin{proof} Just observe that the kernel $S_\om$ of $h_\om: K\to \widehat{K}$ is open in $K$ and hence has
finite index in $K$. The result then follows from the above discussions.
\end{proof}

It follows from \cite{Bag-Klep}*{p.314} that every cocycle on $\R^n$ is similar to one of the form
$\om_J(s,t)=e^{2\pi i\langle Js,t\rangle}$ for a unique skew-symmetric matrix $J\in \Mat_n(\R)$ and
 that $C^*(\R^n,\om_J)$ is commutative if and only if
$J=0$.
In this case it follows from an easy exercise on Fourier transforms that
$$C^*(\R^n,\om_J)= C^*(\R^n)\cong C_0(\widehat{\R^n})\cong C_0(\R^n)$$
 and the dual
action of $\R^n\cong \widehat{\R^n}$ on $C^*(\R^n)$  is transformed to the translation action on $C_0(\R^n)$.
It is clear that this action is  proper in the strongest sense.

In order to show that the dual actions of $\R^n$ on $C^*(\R^n,\om_J)$ are Rieffel proper for all skew-symmetric $J$,
we want to rely on Rieffel's study of his $J$-deformed algebras $C_0(\R^n)_J$ of $C_0(\R^n)$.
For this let  $\Sch(\R^n)$ be the Schwartz space
of rapidly decreasing functions on $\R^n$ equipped with the translation action $\tau$  of $\R^n$.
In \cite{Rieffel:Deformation, Rief-K} Rieffel considers the deformed multiplication $\times_J$ on $\Sch(\R^n)$ given by
the formula
$$f\times_J g(x)=\int_{\R^n}\int_{\R^n} f(x-Ju) g(x-v) e^{2\pi i\langle u,v\rangle}\,dv\,du.$$
We should note that in general this double integral only exists in the given order, and that
Fubini's theorem cannot be applied to it!
We write $\Sch(\R^n)_J$ for $\Sch(\R^n)$ equipped with this multiplication.
Rieffel shows that together with the involution  $f^*(x):=\overline{f(x)}$, $\Sch(\R^n)_J$ becomes a \Star{}algebra
and there is a canonical faithful \Star{}representation
$L_J: \Sch(\R^n)_J\to \mathcal B(L^2(\R^n))$ by bounded operators
given by the formula $L_J(f)\xi=f\times_J\xi$ for $\xi\in \Sch(\R^n)\subseteq L^2(\R^n)$.  The completion $C_0(\R^n)_J:=\overline{\Sch(\R^n)_J}$ with respect to the operator norm of $\mathcal B(L^2(G))$ is the $J$-deformation of $C_0(\R^n)$.

Rieffel shows in \cite{Rief-K}*{\S 2} that the
translation action $\tau$ of $\R^n$ on $\Sch(\R^n)_J$ extends to an action (denoted $\tau_J$)
 of $\R^n$ on $C_0(\R^n)_J$ which is Rieffel proper with respect to the dense subalgebra $\Sch(\R^n)_J$.
 The following lemma extends the above isomorphism $C^*(\R^n)\cong C_0(\R^n)$ to the case of non-zero $J$.
The lemma must be known by Rieffel (see \cite{Rief-Q}*{p. 70}), but we did not find an explicit proof
 in the literature. In what follows we write $\Sch(\R^n,\om_J)$ for the dense subalgebra of $L^1(\R^n,\om_J)$
 consisting of Schwartz functions.

 \begin{lemma}\label{lem-iso}
For every skew-symmetric matrix $J$ the
 Fourier transform
 $$\mathcal F:\Sch(\R^n)_J \to\Sch(\R^n,\om_J),\quad  f\mapsto\widehat{f}$$
  extends to a $\tau_J-\widehat{\om_J}$-equivariant isomorphism $\Phi: C_0(\R^n)_J\to C^*(\R^n,\om_J)$.
 \end{lemma}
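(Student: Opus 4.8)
The plan is to verify that the Fourier transform, which is a well-known linear topological isomorphism $\mathcal F\colon \Sch(\R^n)\to\Sch(\R^n)$, intertwines the deformed product $\times_J$ on the source with the twisted convolution $*_{\om_J}$ on the target, and likewise intertwines the two involutions; then a standard argument extends it to the C*-completions. First I would recall the formulas: on $\Sch(\R^n)_J$ the product is
$$f\times_J g(x)=\int_{\R^n}\int_{\R^n} f(x-Ju)g(x-v)e^{2\pi i\langle u,v\rangle}\dd v\dd u,$$
while on $\Sch(\R^n,\om_J)$ the twisted convolution is $\widehat f*_{\om_J}\widehat g(t)=\int_{\R^n}\widehat f(s)\widehat g(t-s)e^{2\pi i\langle Js,t-s\rangle}\dd s$. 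The core computation is to show $\widehat{f\times_J g}=\widehat f*_{\om_J}\widehat g$. I would write out $\widehat{f\times_J g}(\xi)=\int f\times_J g(x)e^{-2\pi i\langle x,\xi\rangle}\dd x$, substitute the double integral, and carefully manipulate: the inner $v$-integral $\int g(x-v)e^{2\pi i\langle u,v\rangle}\dd v$ is essentially $\widehat g$ evaluated at $u$ up to a phase, and after a change of variables in $x$ (and using skew-symmetry of $J$, so that $\langle Ju,\xi\rangle$-type terms reorganize into the cocycle $e^{2\pi i\langle Js,t-s\rangle}$) one recovers exactly the twisted convolution. Since the iterated integrals are only conditionally convergent in the stated order, I must be slightly careful: the cleanest route is to do the computation first for $f,g$ in a dense subspace where Fubini is legitimate (e.g. where the oscillatory integrals can be regularized), and then pass to general Schwartz functions by continuity in the Schwartz topology, using that $\times_J$ and $*_{\om_J}$ are both separately continuous there (this is part of Rieffel's setup, which I may cite). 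For the involution the check is immediate: $\widehat{f^*}(\xi)=\overline{\widehat f(-\xi)}$, and one verifies this equals $\overline{\om_J(\xi,-\xi)\,\widehat f(-\xi)}$ because $\om_J(\xi,-\xi)=e^{2\pi i\langle J\xi,-\xi\rangle}=1$ by skew-symmetry of $J$.

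Having established that $\mathcal F\colon(\Sch(\R^n),\times_J,{}^*)\to(\Sch(\R^n,\om_J),*_{\om_J},{}^*)$ is a \Star{}isomorphism, I would next show it is isometric for the two C*-norms, equivalently that it intertwines the defining Hilbert-space representations $L_J$ and $\lambda_{\om_J}$ up to a unitary on $L^2(\R^n)$. The natural candidate unitary is again the Fourier transform $\mathcal F\colon L^2(\R^n)\to L^2(\R^n)$ (Plancherel): I claim $\mathcal F\circ L_J(f)=\lambda_{\om_J}(\widehat f)\circ\mathcal F$ on the dense subspace $\Sch(\R^n)$. This is just the identity $\widehat{f\times_J\xi}=\widehat f*_{\om_J}\widehat\xi$ applied with the second argument being an $L^2$-Schwartz function, which is the same computation as above. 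It follows that $\|L_J(f)\|=\|\lambda_{\om_J}(\widehat f)\|$ for all $f\in\Sch(\R^n)$, so $\mathcal F$ extends to an isometric \Star{}isomorphism $\Phi\colon C_0(\R^n)_J\to C^*(\R^n,\om_J)$ of the completions; surjectivity is clear since $\Sch(\R^n,\om_J)$ is dense in $C^*(\R^n,\om_J)$ and $\Phi$ has closed range.

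Finally I would check equivariance. The actions are given on the dense subalgebras by explicit formulas: $\tau_J$ is the restriction of the translation action $\tau$, so $(\tau_{J,y}f)(x)=f(x-y)$, while the dual action is $(\widehat{\om_J}_\chi g)(t)=\chi(t)g(t)$ for $\chi\in\widehat{\R^n}\cong\R^n$. Since the Fourier transform converts translation by $y$ into multiplication by the character $e^{2\pi i\langle y,\cdot\rangle}$, i.e. $\widehat{\tau_y f}=\chi_y\cdot\widehat f$ with $\chi_y$ the corresponding element of $\widehat{\R^n}$, we get $\mathcal F\circ\tau_{J,y}=\widehat{\om_J}_{\chi_y}\circ\mathcal F$ on $\Sch(\R^n)_J$; by density this passes to $\Phi$, identifying $\R^n$ with $\widehat{\R^n}$ via $y\mapsto\chi_y$. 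This completes the proof. The only genuinely delicate point, which I expect to be the main obstacle, is the rigorous justification of the convolution-to-deformed-product identity in the presence of the merely iterated (non-absolutely-convergent) integral defining $\times_J$; the standard device is to insert a Gaussian regularizer $e^{-\epsilon(|u|^2+|v|^2)}$, apply Fubini legitimately, and let $\epsilon\to 0$, using dominated convergence on the Schwartz-function side and continuity of both products in the Schwartz topology — all of which is implicit in Rieffel's treatment in \cite{Rieffel:Deformation, Rief-K} and can be quoted.
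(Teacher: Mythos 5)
Your proposal is correct and follows essentially the same route as the paper: one verifies $\widehat{f\times_J g}=\widehat f*_{\om_J}\widehat g$ on Schwartz functions, observes that the Plancherel unitary then intertwines $L_J$ with $\lambda_{\om_J}\circ\mathcal F$ so that $\mathcal F$ is isometric for the two \cstar{}norms and extends to the completions, and equivariance is the standard translation--modulation behaviour of the Fourier transform. (The Gaussian regularization you anticipate is not actually needed: performing the inner $v$-integral first, as the iterated integral prescribes, leaves an absolutely convergent integrand $f(x-Ju)\widehat g(u)$, after which Fubini and changes of variables using $J^t=-J$ finish the computation exactly as in the paper.)
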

 \begin{proof}
Recall that $\widehat{f}(u)=\int_{\R^n} f(s) e^{-2\pi i\langle s,u\rangle}\,ds$. Then, for
 $f,g\in \Sch(\R^n)$ one checks the formula $\widehat{f\times_J g}=\widehat{f}*_\om \widehat{g}$
(mentioned by Rieffel in \cite{Rief-Q}*{p. 70}) as follows:
First observe that
\begin{align*}
f\times_J g(x) &=\int_{\R^n}\int_{\R^n} f(x- Ju) g(x-v)e^{2\pi i\langle v,u\rangle}\,dv\,du\\
&\stackrel{v\mapsto x-v}{=} \int_{\R^n}\int_{\R^n} f(x- Ju) g(v)e^{2\pi i\langle x-v,u\rangle}\,dv\,du\\
&= \int_{\R^n} f(x-Ju) \widehat{g}(u) e^{2\pi i \langle x,u\rangle}\,du.
\end{align*}
Applying the Fourier transform to this expression gives
\begin{align*}
\widehat{f\times_J g}(z)&=
\int_{\R^n}\int_{\R^n}  f(x-Ju) \widehat{g}(u) e^{2\pi i \langle x,u-z\rangle}\,du\,dx\\
&\stackrel{x\mapsto x+Ju}{=}
\int_{\R^n}\int_{\R^n}  f(x) \widehat{g}(u) e^{-2\pi i \langle x+Ju, z-u\rangle}\,dx\,du\\
&=
\int_{\R^n} \widehat{f}(z-u)  \widehat{g}(u) e^{-2\pi i \langle Ju, z-u\rangle}\,du\\
&\stackrel{u\mapsto z-u}{=}
\int_{\R^n} \widehat{f}(u)  \widehat{g}(z-u) e^{2\pi i \langle Ju, z-u\rangle}\,du\\
&= \widehat{f}*_{\om_J}\widehat{g}(z),
\end{align*}
 where in the second to last equation we  used  $J^t=-J$. The formula also shows that
the Plancherel isomorphism $\mathcal F: L^2(\R^n)\to L^2(\R^n)$ induces a unitary equivalence between $\lambda_\om\circ \Phi$ and
$L_J$, which implies that $\Phi$ is isometric with respect to the \cstar{}norms.
\end{proof}

As mentioned before, the above lemma combined with the results in \cite{Rief-K}*{\S 2} yields the following:
\begin{corollary}
The dual actions of $\R^n$ on $C^*(\R^n,\om_J)$ are always Rieffel proper with respect to $\Sch(\R^n,\om_J)$.
\end{corollary}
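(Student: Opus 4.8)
The plan is simply to transport Rieffel's properness result for the deformed algebras $C_0(\R^n)_J$ along the isomorphism of Lemma~\ref{lem-iso}. Recall that, as quoted above from \cite{Rief-K}*{\S 2}, the translation action $\tau_J$ of $\R^n$ on $C_0(\R^n)_J$ is Rieffel proper with respect to the dense \Star{}subalgebra $\Sch(\R^n)_J$; equivalently, by Proposition~\ref{prop:CharacterizationRieffelProperActions}, the dense subspace $\Rel:=\Sch(\R^n)_J$ satisfies condition (P1) for $(C_0(\R^n)_J,\tau_J)$. Lemma~\ref{lem-iso} produces a $\tau_J$--$\widehat{\om_J}$-equivariant \Star{}isomorphism $\Phi\colon C_0(\R^n)_J\to C^*(\R^n,\om_J)$ whose restriction to $\Sch(\R^n)_J$ is the Fourier transform $\mathcal F$, and $\mathcal F$ maps $\Sch(\R^n)_J$ bijectively onto $\Sch(\R^n,\om_J)$.

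First I would note that condition (P1), and all of Rieffel's structure, is visibly invariant under equivariant \Star{}isomorphisms: if $\Rel\subseteq A$ witnesses Rieffel properness of $(A,\alpha)$ and $\Phi\colon A\to B$ is an equivariant isomorphism, then $\Phi(\Rel)$ witnesses Rieffel properness of $(B,\beta)$, since $\Phi$ intertwines the functions $s\mapsto\bbraket{\xi}{\eta}(s)$ with those for $\Phi(\xi),\Phi(\eta)$. Applying this to the $\Phi$ of Lemma~\ref{lem-iso} and $\Rel=\Sch(\R^n)_J$ yields that $\widehat{\om_J}$ is Rieffel proper with respect to $\Phi(\Sch(\R^n)_J)=\mathcal F(\Sch(\R^n)_J)=\Sch(\R^n,\om_J)$, which is exactly the claim (here $\R^n$ is unimodular, so $\Delta\equiv 1$ and the two halves of (P1) coincide, and $\widehat{\R^n}$ is read as $\R^n$ via the usual pairing). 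Alternatively, one may obtain the same conclusion from Corollary~\ref{cor-functorial}, applied to the nondegenerate (indeed bijective) equivariant \Star{}homomorphism $\Phi$: it gives Rieffel properness of $\widehat{\om_J}$ with respect to $\Phi(\Sch(\R^n)_J)\cdot C^*(\R^n,\om_J)$, which equals $\Sch(\R^n,\om_J)$ because $\Phi$ is onto and $\Sch(\R^n,\om_J)$ is a dense subalgebra.

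There is no real obstacle remaining: the entire analytic content---the convergence of the twisted convolutions and the compatibility of the Fourier transform with $\times_J$ and $*_{\om_J}$---was already handled in Lemma~\ref{lem-iso}, and the properness of $\tau_J$ on $C_0(\R^n)_J$ is Rieffel's. The only points that need a word are the elementary facts that $\mathcal F$ carries $\Sch(\R^n)_J$ onto the twisted Schwartz algebra $\Sch(\R^n,\om_J)$ and that this space is dense in $C^*(\R^n,\om_J)$ (it contains $C_c(\R^n)$ and is dense already in $L^1(\R^n,\om_J)$).
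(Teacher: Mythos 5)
Your main argument—transporting Rieffel's properness of $\tau_J$ on $C_0(\R^n)_J$ with respect to $\Sch(\R^n)_J$ along the equivariant isomorphism of Lemma~\ref{lem-iso}, which carries $\Sch(\R^n)_J$ onto $\Sch(\R^n,\om_J)$—is correct and is precisely the paper's (implicit) proof of the corollary. One small caveat on your parenthetical alternative: Corollary~\ref{cor-functorial} would yield properness with respect to $\Phi(\Sch(\R^n)_J)\cdot C^*(\R^n,\om_J)$, which is \emph{not} equal to $\Sch(\R^n,\om_J)$ (a Schwartz element times a generic algebra element need not be Schwartz), so that route does not literally give the stated subspace; your direct transport argument, however, is complete on its own.
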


We now use this fact to prove:

\begin{theorem}\label{thm-Rieffel proper}
Let $G$ be any second countable abelian group and let $\om\in Z^2(G,\T)$. Then the dual action
$\widehat{\om}:G\to \Aut(C^*(G,\om))$ is Rieffel proper and saturated.
\end{theorem}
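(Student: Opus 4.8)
The plan is to combine Rieffel's deformation theory --- via the corollary above, which supplies the Rieffel properness of the dual action on $C^*(\R^n,\om_J)$ --- with the functoriality result Corollary~\ref{cor-functorial} and a direct estimate in the remaining variables. Since cohomologous cocycles give $\widehat G$-equivariantly isomorphic twisted group algebras, I may replace $\om$ by any representative of its class. By the structure theorem for (second countable) locally compact abelian groups I write $G\cong\R^n\times H$ with $H$ containing a compact open subgroup $K$, and by Lemma~\ref{lem-1} (applied to this product, as $\R^n$ is locally euclidean) I may assume $\om=\om_{\R^n}\cdot\om_\beta\cdot\om_H$, where $\om_{\R^n},\om_H$ are the restrictions of $\om$ to $\R^n$ and $H$ and $\om_\beta\bigl((x_1,h_1),(x_2,h_2)\bigr)=\beta(x_1,h_2)$ for a continuous bicharacter $\beta\colon\R^n\times H\to\T$. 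Using Lemma~\ref{lem-2} and the structure theory I further arrange that $\om$ is locally constant in the $H$-directions and that $\om_{\R^n}=\om_J$ for a skew-symmetric $J$. Finally, since $\beta$ restricts trivially to the compact group $\{0\}\times K$, the homomorphism $\phi\colon H\to\widehat{\R^n}$, $\phi(h)=\beta(\cdot,h)$, kills $K$, hence factors through the discrete group $H/K$ and is locally constant.

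The first step is to show that the subaction of the closed subgroup $\widehat{\R^n}\le\widehat G$ is Rieffel proper. Performing the Fourier transform of Lemma~\ref{lem-iso} in the $\R^n$-variable identifies $C^*(G,\om)$ with a twisted crossed product of $C_0(\R^n)_J$ by $H$, in which $H$ acts on $C_0(\R^n)_J$ through $\phi$ followed by the dual action $\tau_J=\widehat{\om_J}$; under this identification the canonical embedding $\Phi\colon C^*(\R^n,\om_J)\cong C_0(\R^n)_J\to\M\bigl(C^*(G,\om)\bigr)$ of the coefficient algebra is nondegenerate and intertwines $\tau_J$ with the subaction of $\widehat{\R^n}$. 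Since $\tau_J$ is Rieffel proper with respect to $\Sch(\R^n,\om_J)$ by the corollary above, Corollary~\ref{cor-functorial} then gives that the subaction of $\widehat{\R^n}$ on $C^*(G,\om)$ is Rieffel proper; in particular condition (P1) relative to $\widehat{\R^n}$ holds for the dense subspace $\Rel\defeq\Phi\bigl(\Sch(\R^n,\om_J)\bigr)\cdot\Sch(G,\om)$, which lies in $\Phi(\Sch(\R^n,\om_J))\cdot C^*(G,\om)$, in the Schwartz--Bruhat algebra $\Sch(G,\om)$, and is invariant under the full dual action of $\widehat G$.

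The second step upgrades (P1) from $\widehat{\R^n}$ to $\widehat G=\widehat{\R^n}\times\widehat H$. Fix $\xi,\eta\in\Rel$ and write $\chi=(\chi_1,\chi_2)$. In $\bbraket{\xi}{\eta}(\chi)=\xi^{*}\widehat\om_\chi(\eta)$ --- which for such $\xi,\eta$ is the twisted convolution $\xi^{*}*_{\om}(\chi\cdot\eta)$ --- I integrate out the $H$-coordinate of the convolution variable; the dependence on $\chi_2$ then appears as a Fourier transform, in a variable ranging over a fixed compact subset of $H$, of a compactly supported function that is locally constant with uniformly bounded level. Here one uses that $\xi,\eta$ are compactly supported and locally constant in the $H$-direction, that $\om_H$ is locally constant, and that $\phi$ is locally constant --- the mixing factor $\beta(x_1,h_2)$ contributing only a locally constant phase in $h_2$. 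It follows that $\bbraket{\xi}{\eta}(\chi_1,\chi_2)=0$ for $\chi_2$ outside some fixed compact set $C\subseteq\widehat H$, and hence, writing $\widehat\om_\chi(\eta)=\widehat\om_{\chi_1}(\chi_2\cdot\eta)$ with $\chi_2\cdot\eta\in\Rel$,
$$\int_{\widehat G}\bigl\|\bbraket{\xi}{\eta}(\chi)\bigr\|\,d\chi=\int_{C}\Bigl(\int_{\widehat{\R^n}}\bigl\|\xi^{*}\widehat\om_{\chi_1}(\chi_2\cdot\eta)\bigr\|\,d\chi_1\Bigr)d\chi_2 .$$
By the first step the inner integral is finite for every $\chi_2$ and depends continuously on $\chi_2$, so the right-hand side is finite; as $\widehat G$ is unimodular, this is precisely condition (P1) of Proposition~\ref{prop:CharacterizationRieffelProperActions}, which then yields that the dual action $\widehat\om$ is Rieffel proper with respect to $\Rel$.

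Finally, saturation should be automatic: a twisted Takai duality identifies the crossed product $C^*(G,\om)\rtimes_{\widehat\om}\widehat G$ (which equals the reduced one, $\widehat G$ being abelian) with $\K(L^2(G))$, which is simple; hence the ideal $\I=\cspn\bbraket{\Rel}{\Rel}$, being nonzero, is all of the crossed product, that is, $\widehat\om$ is saturated. The main obstacle is the mixing bicharacter $\om_\beta$: it couples the two factors of $G$, so Rieffel properness cannot be read off directly from Corollary~\ref{cor-tensor}, and one is forced into the two-step argument above --- which works precisely because $\om_\beta$ comes from a bicharacter and so is locally constant in the $H$-variable.
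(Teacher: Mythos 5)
Your Step 1 (properness of the $\widehat{\R^n}$\nb-subaction via Lemma~\ref{lem-iso} and Corollary~\ref{cor-functorial}) and your saturation argument are fine, but the reduction you perform before Step 2 contains a genuine gap: you "arrange that $\om$ is locally constant in the $H$\nb-directions". Lemma~\ref{lem-2} only applies to \emph{compact} groups, so it controls $\om$ on $K$ (up to finite index), not on $H$; and when the compact open subgroup $K\sbe H$ has nontrivial connected component this normalisation is impossible. Concretely, for $G=\R\times H$ with $H=\T\times\Z$ and $\om$ the bicharacter $\om\big((z_1,n_1),(z_2,n_2)\big)=z_1^{n_2}$ (trivial in the $\R$\nb-variable), no cocycle similar to $\om$ is locally constant, since $h_\om$ is already nonconstant on the connected group $\T$; in general $\om_H$ is only a Borel cocycle and need not even be continuous. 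Step 2 then collapses: functions that are compactly supported and locally constant in the $H$\nb-direction are constant along the connected directions of $K$, so your $\Rel$ is not dense (already for $G=\R\times\T$, where such functions map into $C^*(\R,\om_\R)\otimes\C p_0\subsetneq C^*(\R,\om_\R)\otimes c_0(\Z)$); if instead you take genuinely Bruhat--Schwartz elements (smooth in torus directions), the key claim that $\bbraket{\xi}{\eta}(\chi_1,\chi_2)$ vanishes for $\chi_2$ outside a compact subset of $\widehat H$ fails (Fourier transforms on $\T$ are only rapidly decreasing), and multiplying by the merely Borel phases $\om_H$, $\om_\beta$ destroys the regularity you need. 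The auxiliary assertions -- that $\Sch(G,\om)$ has the stated properties and that the inner integral depends continuously on $\chi_2$ (needed to integrate over $C$) -- are also left unproved.

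The fight with the mixing bicharacter $\om_\beta$ is in fact unnecessary, and this is how the paper's proof avoids all of the above: restrict $\om$ to the \emph{open} subgroup $L=\R^n\times K$. Since $K$ is compact, there are no nontrivial bicharacters $\R^n\times K\to\T$, so Lemma~\ref{lem-1} gives $\om|_L\sim\om_{\R^n}\cdot\om_K$ with no mixing term, and Lemma~\ref{lem-2} (now legitimately applied) trivialises $\om_K$ after passing to a finite-index open subgroup; hence $C^*(L,\om|_L)\cong C^*(\R^n,\om_{\R^n})\otimes C_0(\widehat K)$, and Corollary~\ref{cor-tensor} gives Rieffel properness of the dual $\widehat L$\nb-action. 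Since $\widehat L\cong\widehat G/N$ with $N=\widehat{G/L}$ compact, Corollary~\ref{cor-lift} inflates this to $\widehat G$, and the restriction of $\lambda_\om$ to $L$ yields a nondegenerate $\widehat G$\nb-equivariant map $C^*(L,\om|_L)\to\M(C^*(G,\om))$, so Corollary~\ref{cor-functorial} concludes; saturation then follows, as you say, from Quigg's twisted duality and the simplicity of $\K(L^2(G))$. If you want to salvage your two-step strategy, you would have to replace "locally constant" by a Bruhat--Schwartz/rapid-decay argument and first normalise the cocycle so that its $H$\nb-part is manageable -- at which point you are essentially forced back to working on $\R^n\times K$ anyway.
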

\begin{proof}
Recall first that the structure theorem for abelian locally compact groups says that
$G$ is isomorphic to a direct product $\R^n\times H$ for some $n\in \N_0$ such that $H$
contains a compact open subgroup $K$ (\eg see \cite{DE}*{Theorem 4.2.1}).
Let $\tilde\om$ denote the restriction of $\om$ to
$\R^n\times K$. There are no non-trivial  bicharacters  $\eta:\R^n\times K\to \T$, since any such
$\eta$ would induce a nontrivial homomorphism from $K$ into $\widehat{\R^n}\cong \R^n$.
It follows from Lemma \ref{lem-1} that $\tilde\om$ is similar to the product $\om_{\R^n}\cdot\om_K$, where $\om_{\R^n}$ and $\om_K$
denote the restrictions of $\om$ to $\R^n$ and $K$, respectively. It follows then from Lemma \ref{lem-2}
that, after passing to a finite-index subgroup $\tilde{K}\subseteq K$, if necessary, we may
assume without loss of generality, that $\om_K$ is similar to $1_K$. This implies that
$C^*(\R^n\times K, \tilde\om)\cong C^*(\R^n,\om_{\R^n})\otimes C^*(K)\cong C^*(\R^n,\om_{\R^n})\otimes C_0(\widehat{K})$.
Since the dual action of $\widehat{\R^n}\cong \R^n$ on $C^*(\R^n,\om_{{\R^n}})$ is proper with respect to $\Sch(\R^n,\om_{\R^n})$
and the action of $\widehat{K}$ on $C_0(\widehat{K})$ is proper with respect to $C_c(\widehat{K})$,
it follows from Corollary~\ref{cor-tensor} that the dual action of $\widehat{\R^n\times K}\cong \R^n\times \widehat{K}$ on
$C^*(\R^n\times K, \tilde\om)$ is proper with respect to $D_0:=\Sch(\R^n,\om_{{\R^n}})\odot \F_K^{-1}(C_c(\widehat{K}))$,
where $\F_K:C^*(K)\to C_0(\widehat{K})$ denotes Fourier transform for $K$.

Since $L\defeq \R^n\times K$ is an open subgroup of $G$, the dual group $\widehat{L}$ is a quotient of
$\widehat{G}$ with respect to the compact (normal) subgroup $N:=\widehat{G/L}$. Hence it
follows from Corollary~\ref{cor-lift} that the inflation of the dual action of $\widehat{\R^n\times K}$ on $C^*(\R^n\times K,\tilde\om)$ to $\widehat{G}$
is also Rieffel proper.
Now, the restriction of the $\om$-regular representation $\lambda_\om: G\to \U(L^2(G))$ to $\R^n\times K$
induces a $\widehat{G}$-equivariant nondegenerate \Star{}homomorphism, and hence the dual action
of $\widehat{G}$ on $C^*(G,\om)$ is Rieffel proper with respect to
$C^*(G,\om)_0:=\lambda_\om(D_0) C^*(G,\om)$ by Corollary~\ref{cor-functorial}.

Finally, to see that the dual action is also saturated, we simply note that $C^*(G,\om)\rtimes_{\widehat{\om}}\widehat{G}$
is isomorphic to the compact operators on $L^2(G)$, which follows from general Takesaki-Takai duality for crossed products by
twisted actions \cite{Quigg:Duality_twisted}. Since $\mathcal K(L^2(G))$ is simple, the $C^*(G,\om)\rtimes_{\widehat{\om}}\widehat{G}$-valued
inner product on $\F(C^*(G,\om)_0)$ must be full.
\end{proof}

Given any action $\alpha:G\to \Aut(A)$ of the abelian locally compact group $G$ on the $C^*$-algebra $A$ together with
a $2$-cocycle $\om \in Z^2(G,\T)$, we can form the Busby-Smith twisted action $(\alpha, \om\cdot 1_A)$ of $G$ on $A$ (we refer to \cite{Packer-Raeburn:Stabilisation} for details on crossed products by Busby-Smith
twisted actions). It is then easily seen that the canonical embedding of $G$ into $\M(A\rtimes_{\alpha,\om}G)$
is an $\om$-representation, and therefore integrates to give a $\widehat{G}$-equivariant \Star{}homomorphism
$\phi: C^*(G,\om)\to{\M}(A\rtimes_{\alpha,\om}G)$. Thus, as a direct consequence of
Corollary~\ref{cor-functorial}, we get:

\begin{corollary}\label{cor:DualActionsTwistedAreRieffelProper}
Let $(A,\alpha)$ be any action of a second countable locally compact abelian group $G$ on a \cstar{}algebra $A$ and let $\omega$ be any Borel $2$-cocycle on $G$. Then the dual action $\dual\alpha_\omega$ of $\dualG$ on the twisted crossed product $A\rtimes_{\alpha,\omega}G$ is Rieffel proper and saturated.
\end{corollary}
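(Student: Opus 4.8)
The plan is to deduce the statement directly from Theorem~\ref{thm-Rieffel proper} together with Corollary~\ref{cor-functorial}, applied to the \Star{}homomorphism $\phi\colon\Cst(G,\om)\to\M(A\rtimes_{\alpha,\om}G)$ constructed just above. Since $G$ is second countable and abelian, Theorem~\ref{thm-Rieffel proper} tells us that the dual action $\dual\om\colon\dualG\to\Aut(\Cst(G,\om))$ is Rieffel proper and saturated; I would fix once and for all a dense subspace $\Rel_0\sbe\Cst(G,\om)$ as in Proposition~\ref{prop:CharacterizationRieffelProperActions} that witnesses this. The remaining task is to transport both properties along $\phi$.

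The one point that genuinely needs an argument is the nondegeneracy of $\phi$. Recall that $\phi$ is the integrated form of the canonical embedding $i_G\colon G\to\M(A\rtimes_{\alpha,\om}G)$, which is an $\om$-representation. Hence, for $g\in\contc(G)$ and $a\in A$, the product $i_A(a)\phi(g)=\int_G g(s)\,i_A(a)i_G(s)\dd s$ is identified with the function $s\mapsto a\,g(s)$ in $\contc(G,A)\sbe A\rtimes_{\alpha,\om}G$; as $a$ and $g$ vary, these span the dense subspace $\contc(G)\odot A$, so that $\overline{(A\rtimes_{\alpha,\om}G)\cdot\phi\big(\Cst(G,\om)\big)}=A\rtimes_{\alpha,\om}G$ and $\phi$ is nondegenerate. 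The $\dualG$\nb-equivariance of $\phi$ for $\dual\om$ and $\dual\alpha_\om$ has already been recorded above, and $\dual\alpha_\om$ is, as always for dual actions on crossed products, strongly continuous, so $(A\rtimes_{\alpha,\om}G,\dual\alpha_\om)$ is a genuine $\dualG$\nb-algebra. We are therefore exactly in the situation of Corollary~\ref{cor-functorial}, with $\dualG$ in the role of the acting group, $(\Cst(G,\om),\dual\om)$ in the role of $(A,\alpha)$, $(A\rtimes_{\alpha,\om}G,\dual\alpha_\om)$ in the role of $(B,\beta)$, and $\Phi=\phi$.

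Corollary~\ref{cor-functorial} then immediately yields that $\dual\alpha_\om$ is Rieffel proper with respect to $\Rel_B:=\phi(\Rel_0)\cdot(A\rtimes_{\alpha,\om}G)$, and, since $\Cst(G,\om)$ is saturated with respect to $\Rel_0$, the final clause of that corollary shows that $A\rtimes_{\alpha,\om}G$ is likewise saturated with respect to $\Rel_B$. So the only real obstacle in the argument is the (elementary) density computation behind the nondegeneracy of $\phi$; the two supporting facts --- that $\phi$ is well defined on all of $\Cst(G,\om)$ via the universal property of the twisted group algebra, and that $\dual\alpha_\om$ is strongly continuous --- are standard and covered by the references cited before the statement.
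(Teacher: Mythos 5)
Your proposal is correct and follows the paper's own route: the paper likewise obtains the corollary by integrating the canonical $\om$-representation $i_G$ to a nondegenerate $\dualG$-equivariant map $\phi\colon C^*(G,\om)\to\M(A\rtimes_{\alpha,\om}G)$ and then invoking Theorem~\ref{thm-Rieffel proper} together with Corollary~\ref{cor-functorial}, including its final clause for saturation. Your added verification of the nondegeneracy of $\phi$ (via the density of the elements corresponding to $s\mapsto ag(s)$ in $C_c(G,A)$) is a harmless elaboration of a point the paper simply asserts.
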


Recall that an action $\alpha:G\to\Aut(A)$ is called {\em weakly proper} if there exists a locally compact proper $G$\nb-space
$X$ together with a nondegenerate \Star{}homomorphism $\phi:C_0(X)\to \M(A)$. It is well known that every weakly proper
action is Rieffel proper. This has first been observed by Rieffel in \cite{Rieffel:Integrable_proper}, but
follows also easily from Corollary \ref{cor-functorial} since for proper $G$\nb-spaces $X$ the
corresponding action of $G$ on $C_0(X)$ is Rieffel proper with respect to $A_0=C_c(X)$.
As mentioned in the introduction,
weakly proper actions enjoy a number of nice properties which are unknown for general Rieffel proper actions.
On the other hand, so far it seemed to be open (at least to us), whether the class of Rieffel proper actions really differs from the
class of weakly proper actions, since most standard examples of Rieffel proper actions given in the literature,
like dual actions on {\em ordinary} crossed products,
are also weakly proper. In what follows next, we show that dual actions on twisted group algebras $C^*(G,\om)$
for connected abelian groups $G$ are weakly proper if and only if the cocycle $\om$ is similar to the trivial
cocycle $1_G$, hence providing many examples of Rieffel proper actions which are not weakly proper.

We should note that the notion ``weakly proper'' has been introduced by the authors in \cite{Buss-Echterhoff:Exotic_GFPA}
in order to differentiate them from proper actions in the (very strong) sense of Kasparov in which we have a proper $G$\nb-space $X$
together with a nondegenerate \Star{}homomorphism $\phi:C_0(X)\to\mathcal Z\M(A)$, where $\mathcal Z\M(A)$ denotes the {\em center} of the
multiplier algebra $\M(A)$.

We need the following lemma, in which $\M(C^*(G,\om))^{\widehat{G}}$ denotes the {\em classical} fixed-point algebra:
$$\M(C^*(G,\om))^{\widehat{G}}=\{m\in \M(C^*(G,\om)): \widehat{\om}_{\chi}(m)=m
\;\forall \chi\in \widehat{G}\}.$$

\begin{lemma}\label{lem-fix}
For any abelian locally compact group we have
$\M(C^*(G,\om))^{\widehat{G}}=\C$.
\end{lemma}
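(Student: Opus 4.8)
The plan is to realise everything concretely on $L^2(G)$ via the $\om$\nb-regular representation and then play off two different descriptions of the dual action against each other. Since $\lambda_\om\colon C^*(G,\om)\to\B(L^2(G))$ is faithful and nondegenerate, it extends to a faithful \Star{}homomorphism $\M(C^*(G,\om))\hookrightarrow\B(L^2(G))$ whose image is the idealiser of $\lambda_\om(C^*(G,\om))$ in $\B(L^2(G))$; we identify $\M(C^*(G,\om))$ with this subalgebra. A short computation on $L^1(G,\om)$, using that $\chi$ is a character, gives $M_\chi\,\lambda_\om(f)\,M_\chi^{*}=\lambda_\om(\chi\cdot f)$, where $M_\chi\in\U(L^2(G))$ is multiplication by $\chi$, $(M_\chi\xi)(t)=\chi(t)\xi(t)$. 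As $\lambda_\om(L^1(G,\om))$ is dense in $C^*(G,\om)$ this shows $\widehat{\om}_\chi=\Ad(M_\chi)$ on $C^*(G,\om)$, hence, by strict continuity, on all of $\M(C^*(G,\om))$. In particular every $m\in\M(C^*(G,\om))^{\widehat G}$ commutes with $M_\chi$ for all $\chi\in\widehat G$.

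Next we bring in the canonical $\om$\nb-representation $u\colon G\to\U\M(C^*(G,\om))$, characterised by $u_gu_h=\om(g,h)u_{g+h}$; under $\lambda_\om$ it becomes the twisted translation $(\lambda_\om(u_g)\xi)(t)=\om(g,t-g)\,\xi(t-g)$. The key point is the identity $\Ad(u_g)=\widehat{\om}_{h_\om(g)}$ of automorphisms of $C^*(G,\om)$ (and hence of $\M(C^*(G,\om))$), where $h_\om\colon G\to\widehat G$, $h_\om(g)(t)=\om(g,t)\overline{\om(t,g)}$, is the homomorphism from \S\ref{sec-twisted}: indeed $u_gu_tu_g^{*}=\om(g,t)\overline{\om(t,g)}\,u_t=h_\om(g)(t)\,u_t=\widehat{\om}_{h_\om(g)}(u_t)$, which extends to $\Ad(u_g)(f)=h_\om(g)\cdot f$ for $f\in L^1(G,\om)$ and thus to all of $C^*(G,\om)$ by continuity. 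Consequently, for $m\in\M(C^*(G,\om))^{\widehat G}$,
\[\lambda_\om(u_g)\,m\,\lambda_\om(u_g)^{*}=\Ad(u_g)(m)=\widehat{\om}_{h_\om(g)}(m)=m,\]
so $m$ commutes with $\lambda_\om(u_g)$ for all $g\in G$ as well.

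It then remains to check that the von Neumann subalgebra $\mathcal N\sbe\B(L^2(G))$ generated by $\{M_\chi:\chi\in\widehat G\}\cup\{\lambda_\om(u_g):g\in G\}$ equals $\B(L^2(G))$. The $M_\chi$ generate the maximal abelian subalgebra $L^\infty(G)$ of $\B(L^2(G))$ acting by multiplication --- a standard fact, amounting via the Fourier transform to the identification of the von Neumann algebra of the abelian group $\widehat G$ with $L^\infty(G)$. Writing $\lambda_\om(u_g)=M_{c_g}\tau_g$, where $\tau_g$ is the untwisted translation $\tau_g\xi(t)=\xi(t-g)$ and $c_g\in L^\infty(G)$ is the modulus\nb-one Borel function $t\mapsto\om(g,t-g)$, and noting $M_{c_g}\in L^\infty(G)\sbe\mathcal N$, we obtain $\tau_g=M_{\overline{c_g}}\,\lambda_\om(u_g)\in\mathcal N$ for every $g$. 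But $L^\infty(G)$ together with all the $\tau_g$ already generates $\B(L^2(G))$: anything commuting with $L^\infty(G)$ lies in $L^\infty(G)$ (a maximal abelian subalgebra) and so is some $M_k$, and commuting in addition with all $\tau_g$ forces $k(t-g)=k(t)$ for all $g$, \ie\ $k$ constant. Hence $\mathcal N=\B(L^2(G))$ and $m\in\mathcal N'=\C\cdot 1$. Since the reverse inclusion is trivial, $\M(C^*(G,\om))^{\widehat G}=\C$.

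The only genuine input is this last step --- a noncommutative Stone--von Neumann type statement to the effect that the multiplication operators $M_\chi$ and the twisted translations $\lambda_\om(u_g)$ act irreducibly on $L^2(G)$. Everything else is bookkeeping built on the two commuting realisations $\Ad(M_\chi)$ and, via $h_\om$, $\Ad(u_g)$ of the dual action $\widehat{\om}$; a variant of the argument works directly with Fourier coefficients against the canonical trace when $G$ is discrete, but the operator-algebraic formulation above has the advantage of covering the general (non-discrete) case uniformly.
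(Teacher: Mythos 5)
Your argument is correct, and it reaches the conclusion by a genuinely different route than the paper. Both proofs work concretely on $L^2(G)$ and both begin the same way: the strict extension of the dual action is $\Ad(M_\chi)$, so an invariant multiplier commutes with all multiplication operators by characters (the paper's $U_\chi$). The divergence is in the second family of operators and in the final irreducibility step. The paper passes to the von Neumann algebra $\vN(C^*(G,\om))$, brings in the \emph{right} regular $\om$\nb-representation $\rho_\om$, which lies in the commutant for free, and then invokes a twisted Stone--von Neumann fact: $(M,\rho_\om)$ is a covariant representation of the Busby--Smith system $(C_0(G),G,\tau,\tilde\om)$ whose integrated form is dense in $\K(L^2(G))$, so the invariant element commutes with the compacts. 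You instead stay inside $\M(C^*(G,\om))$ and use the \emph{left} twisted translations $u_g$ together with the identity $\Ad(u_g)=\widehat{\om}_{h_\om(g)}$, i.e.\ the observation that along the subgroup $h_\om(G)\subseteq\widehat G$ the dual action is inner; invariance then yields commutation with $\lambda_\om(u_g)$, and your irreducibility step is elementary: the $M_\chi$ generate $L^\infty(G)$, untwisting $\lambda_\om(u_g)$ by the unimodular multiplier $M_{c_g}$ puts the plain translations $\tau_g$ into your von Neumann algebra, and maximal abelianness of $L^\infty(G)$ plus ergodicity of translation give a trivial commutant. What each approach buys: yours avoids any appeal to twisted crossed-product duality or Stone--von Neumann type density, needs no passage to the enveloping von Neumann algebra, and ties the lemma neatly to the homomorphism $h_\om$ already used in \S 3 (note your step is robust even where $h_\om(g)$ is trivial, since then $u_g$ is central); the paper's route needs no relation between inner automorphisms and the dual action, since $\rho_\om$ commutes with the algebra automatically, and its concluding density statement is a standard quotable fact. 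The standard facts you cite (the $M_\chi$ generate $L^\infty(G)$, $L^\infty(G)$ is maximal abelian on $L^2(G)$, translation-invariant $L^\infty$\nb-functions are constant a.e.) are all correct for Haar measure, so there is no gap.
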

Indeed, this lemma is a special case of a much more general result for locally compact quantum groups given by
Vainerman and Vaes in \cite{Vaes-Vainerman:Extension_of_lcqg}*{Theorem 1.11}. For readers which are not familiar with quantum group techniques we
 present a direct proof here:

\begin{proof}[Proof of Lemma \ref{lem-fix}]
Consider the regular representation $\lambda_\om: C^*(G,\om)\to \mathcal B(L^2(G))$ as introduced above.
Since this is a nondegenerate representation, it extends to  a faithful representation of $\M(C^*(G,\om))$
on $\mathcal B(L^2(G))$ which is then contained in the double commutant  $\vN(C^*(G,\om))=\lambda_\om(C^*(G,\om))''\subseteq \mathcal B(L^2(G))$.
Let us define the {\em right regular} $\om$-representation $\rho_\om:G\to \U(L^2(G))$ by
$$\big(\rho_\om(s)\xi\big)(r)= \om(r-s, s)\xi(r-s).$$
Then a short (but tricky) computation using the cocycle identity $\om(s,t)\om(s+t,r)=\om(s, t+r)\om(t,r)$ for $s,t,r\in G$
shows that $\rho_\om(s)$ commutes with $\lambda_{\om}(t)$ (given by the formula $\big(\lambda_\om(t)\xi\big)(r)=\om(t, r-t)\xi(r-t)$)
for all $t\in G$ and hence with $\lambda_\om(f)=\int_G f(t)\lambda_{\om}(t)\,dt$ for all $f\in L^1(G,\om)$.
This shows that $\rho_\om(s)\in \vN(C^*(G,\om))'$.

Recall now that the dual action  $\widehat{\om}:\widehat{G}\to \Aut(C^*(G,\om))$ is given by
$\widehat{\om}_\chi(f)=\chi\cdot f$ for $\chi\in \widehat{G}, f\in L^1(G,\om)$. Let $U: \widehat{G}\to \U(L^2(G))$ be the unitary representation
given by $U_\chi\xi=\chi\cdot\xi$. Then a short computation shows that
$$\lambda_{\om}(\widehat{\om}_\chi(f))=U_\chi \lambda_{\om}(f) U_{\bar\chi}$$
and hence the action extends uniquely to an action on $\vN(C^*(G,\om))$ via
$\widehat{\om}_\chi(T):=U_\chi TU_{\bar\chi}$.
If $T\in \vN(C^*(G,\om))$ is invariant under this action, it follows that $T$ commutes with $U_\chi$ for all $\chi\in \widehat{G}$.
Taking the integrated form, it follows that $T$ commutes with $U(C^*(\widehat{G}))\subseteq \mathcal B(L^2(G))$.
But if we identify $C^*(\widehat{G})$ with $C_0(G)$ by Gelfand transformation and Pontrjagin duality, a short computation shows
that $U(C^*(\widehat{G}))=M(C_0(G))$, where $M:C_0(G)\to\mathcal B(L^2(G))$ denotes the representation by
multiplication operators.

Hence we conclude that every $T$ in the fixed-point algebra $\vN(C^*(G,\om))^{\widehat{G}}$ must commute with
$M(C_0(G))\cup \rho_\om(G)$. Now define a new cocycle $\tilde\om\in Z^2(G,\T)$ by
$\tilde\om(s,t)=\om(t,s)$. The cocycle identity for $\om$ directly translates
into the cocycle identity for $\tilde\om$ and one easily checks that $\rho_\om=\lambda_{\tilde\om}$.
Consider the twisted dynamical system $(C_0(G), G, \tau, \tilde{\om})$  in the sense of
Busby \& Smith (e.g. see \cite{Packer-Raeburn:Stabilisation}). One then checks that $(M, \rho_\om)$ is a covariant representation of
this system on $L^2(G)$ whose integrated form maps $C_c(G\times G)\subseteq L^1(G, C_0(G))$ onto a dense
subspace of $\mathcal K(L^2(G))$. Since $T$ commutes with $(M, \rho_\om)$, it also commutes
with $\mathcal K(L^2(G))$, which implies that $T\in \C 1$.
\end{proof}

Since a generalised fixed-point algebra $A^G$ for a $G$\nb-algebra $A$, if exists, must lie in the classical
fixed-point algebra $\M(A)^G$ of $\M(A)$, we directly get:

\begin{corollary}\label{cor-fixed}
The generalised fixed-point algebra $C^*(G,\om)^{\widehat{G}}$ with respect to any dense subspace $\mathcal R\subseteq C^*(G,\om)$
which implements Rieffel properness of the dual action of $\widehat{G}$ on $C^*(G,\om)$ as in
Proposition \ref{prop:CharacterizationRieffelProperActions} is isomorphic to $\C$.
\end{corollary}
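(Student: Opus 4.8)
The plan is to obtain this as an immediate consequence of Lemma~\ref{lem-fix} together with the general principle, recalled just above, that a generalised fixed-point algebra is always contained in the corresponding classical fixed-point algebra. Concretely, given a dense subspace $\Rel\subseteq C^*(G,\om)$ implementing Rieffel properness of $\widehat\om$ as in Proposition~\ref{prop:CharacterizationRieffelProperActions}, the associated generalised fixed-point algebra $C^*(G,\om)^{\widehat G}$ is by construction the norm closure in $\M(C^*(G,\om))$ of $\spn\,{}_{A^G}\braket{\Rel}{\Rel}$ (equivalently of $\spn\,{}_{A^G}\braket{A_0}{A_0}$ for the associated \Star{}subalgebra $A_0$). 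By condition (P2), each element ${}_{A^G}\braket{\xi}{\eta}$ lies in the $\widehat G$\nb-fixed part $\M(C^*(G,\om))^{\widehat G}$ of the multiplier algebra, so $C^*(G,\om)^{\widehat G}\subseteq\M(C^*(G,\om))^{\widehat G}=\C$ by Lemma~\ref{lem-fix}. Thus $C^*(G,\om)^{\widehat G}$ is a \cstar{}subalgebra of $\C$, hence equals $\{0\}$ or $\C$.

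To exclude the trivial case I would use that the dual action is saturated, which is part of Theorem~\ref{thm-Rieffel proper} (or Corollary~\ref{cor:DualActionsTwistedAreRieffelProper}): the bimodule $\F(\Rel)$ is then a Morita equivalence between $C^*(G,\om)^{\widehat G}$ and $C^*(G,\om)\rtimes_{\widehat\om}\widehat G\cong\mathcal K(L^2(G))\neq 0$, and a Morita equivalence cannot link a nonzero algebra with the zero algebra. Alternatively, and more elementarily, the left inner product ${}_{A^G}\braket{\cdot}{\cdot}$ is always full, so $C^*(G,\om)^{\widehat G}$ is nonzero as soon as $\Rel\neq\{0\}$. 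Either way, $C^*(G,\om)^{\widehat G}=\C$.

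The only nontrivial ingredient is Lemma~\ref{lem-fix}, which has already been established; everything else is formal. In particular there is no real obstacle here, and the content of the corollary is simply the remark that for these Rieffel proper dual actions the generalised fixed-point algebra is forced to be as small as possible and, a fortiori, independent of the chosen dense subspace $\Rel$.
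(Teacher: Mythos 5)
Your proposal is correct and follows the same route as the paper: the generalised fixed-point algebra sits inside the classical fixed-point algebra $\M(C^*(G,\om))^{\widehat G}$, which is $\C$ by Lemma~\ref{lem-fix}, so the corollary is immediate. Your extra step ruling out the zero algebra (via fullness of the left inner product, or saturation) is a harmless elaboration of a point the paper leaves implicit.
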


We use the above result to show:

 \begin{proposition}\label{prop-main}
 Let $G$ be a connected abelian group and let $\om\in Z^2(G,\T)$. Then the  dual action
of $\widehat{G}$ on $C^*(G,\om)$ is weakly proper if and only if $\om$ is similar
 to the trivial cocycle $1_G$.
 \end{proposition}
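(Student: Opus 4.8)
The plan is to prove both implications, the easy one first. If $\om$ is similar to $1_G$, then $C^*(G,\om)\cong C^*(G)\cong C_0(\widehat G)$ and the dual action of $\widehat G$ is carried to the translation action on $C_0(\widehat G)$, which is manifestly weakly proper (it is proper even in the strong sense of Kasparov, with $\phi=\id\colon C_0(\widehat G)\to C_0(\widehat G)=\M(C_0(\widehat G))$). So the content is the converse: if the dual action is weakly proper, then $\om\sim 1_G$. Here is where I would use connectedness of $G$ together with Corollary \ref{cor-fixed}.

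Assume the dual action is weakly proper, so there is a proper $\widehat G$-space $X$ and a nondegenerate equivariant $\Star$homomorphism $\phi\colon C_0(X)\to \M(C^*(G,\om))$. The first key step is to observe that properness of $X$ forces $C_0(X)^{\widehat G}=C_0(X/\widehat G)$ to be a large commutative algebra; more precisely, since $X\to X/\widehat G$ is a proper quotient map, $C_0(X/\widehat G)$ embeds into $\M(C_0(X))^{\widehat G}$ nondegenerately, and hence maps into $\M(C^*(G,\om))^{\widehat G}$ via the extension of $\phi$ to multiplier algebras. By Lemma \ref{lem-fix}, $\M(C^*(G,\om))^{\widehat G}=\C$, so $C_0(X/\widehat G)$ maps into $\C 1$. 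Because $\phi$ is nondegenerate, this map is nonzero, which forces $X/\widehat G$ to be a single point, i.e.\ the $\widehat G$-action on $X$ is \emph{transitive} (and proper, hence $X\cong \widehat G/H$ for a compact subgroup $H\le\widehat G$). The second key step is then to identify $\phi$: the nondegenerate equivariant $\Star$homomorphism $C_0(\widehat G/H)\to\M(C^*(G,\om))$ dualizes (via the Pontrjagin-dual picture $C_0(\widehat G/H)\cong C^*(H^\perp)$, where $H^\perp\le G$ is the annihilator) to an equivariant structure that, combined with the dual action, exhibits a $G$-equivariant embedding — the upshot being that $C^*(G,\om)$ must contain a nontrivial $\widehat G$-equivariant copy of a group C*-algebra of a closed subgroup of $G$, and equivariance pins down the cocycle.

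The cleanest route for this second step is probably to use $G$ connected: then $\widehat G$ has no nontrivial compact subgroups, so $H=\{e\}$ and $X=\widehat G$ itself with the translation action. Thus weak properness gives a nondegenerate $\widehat G$-equivariant $\Star$homomorphism $\phi\colon C_0(\widehat G)\to \M(C^*(G,\om))$. Identifying $C_0(\widehat G)\cong C^*(G)$ and using that $\widehat G$-equivariance means $\phi$ intertwines translation with the dual action, one checks that $\phi$ must be induced by a unitary $\om$-representation of $G$ in $\M(C^*(G,\om))$ that is \emph{also} an ordinary (untwisted) representation compatible with the multiplier unitaries $\un_t$ implementing the dual action; comparing the two multiplication structures forces $\om$ to be a coboundary. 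Concretely, the canonical generators $\un_t\in\M(C^*(G,\om))$ satisfy $\un_s\un_t=\om(s,t)\un_{s+t}$, while $\phi$ supplies a family $v_t=\phi(\delta_t)$ (suitably interpreted) with $v_s v_t=v_{s+t}$; the equivariance relation between $v_t$ and $\un_t$ yields a Borel map $c\colon G\to\T$ with $\un_t=c(t)v_t$, and then $\om(s,t)=c(s)c(t)\overline{c(s+t)}$, i.e.\ $\om\sim 1_G$. The main obstacle I anticipate is making this last identification rigorous at the level of multiplier algebras — passing from an abstract nondegenerate equivariant $\Star$homomorphism to an honest family of unitaries with the right commutation relations requires some care with strict topology and with the fact that $C^*(G,\om)$ is represented on $L^2(G)$, so I would likely argue inside $\vN(C^*(G,\om))\subseteq\mathcal B(L^2(G))$ as in the proof of Lemma \ref{lem-fix}, where the relevant unitaries $\lambda_\om(t)$, $U_\chi$, $\rho_\om(t)$ and the multiplication operators $M(C_0(G))$ are all concretely available.
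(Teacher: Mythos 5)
Your route is essentially viable, but it is genuinely different from the paper's. The paper first uses the structure theorem to write $G\cong\R^n\times K$ with $K$ compact connected, trivialises the cocycle on $K$, and quotients onto $C^*(\R^n,\om_{\R^n})$; it then replaces the given proper $\widehat{G}$-space by $\R^n$ itself (every proper $\R^n$-space maps equivariantly onto $\R^n$, via Palais' slice theorem and triviality of principal $\R^n$-bundles) and concludes by Landstad duality: the generalised fixed-point algebra must sit inside $\M(C^*(\R^n,\om))^{\R^n}=\C$ by Lemma~\ref{lem-fix}, so $C^*(\R^n,\om)\cong\C\rtimes\R^n$ would be commutative, a contradiction. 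You instead stay with $G$ itself, use Lemma~\ref{lem-fix} to collapse $X$ to a single orbit, use that $\widehat{G}$ has no nontrivial compact subgroups (this is where connectedness enters for you; in the paper it enters through $K$ and through the universal-space argument for $\R^n$), and then trivialise $\om$ directly by comparing the canonical $\om$-representation $u_t$ with the untwisted representation $v_t$ supplied by $\phi$. This avoids Landstad duality and the bundle-theoretic step entirely, so it is more elementary and self-contained; both arguments ultimately rest on Lemma~\ref{lem-fix}.

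Two steps need repair before your sketch is a proof. First, the inference ``the map $C_0(X/\widehat{G})\to\C$ is nonzero, hence $X/\widehat{G}$ is a point'' is not valid: a nonzero \Star{}homomorphism $C_0(X/\widehat{G})\to\C$ is merely evaluation at some point $\bar x_0$. What does follow, and what you actually need, is that $\phi$ factors through the orbit over $\bar x_0$: for $g\in C_c(X)$ with $\supp g\cap q^{-1}(\bar x_0)=\emptyset$ choose $f\in C_c(X/\widehat{G})$ with $f\equiv 1$ on $q(\supp g)$ and $f(\bar x_0)=0$; then $\phi(g)=\bar\phi(q^*f)\phi(g)=f(\bar x_0)\phi(g)=0$, so $\phi$ kills $C_0(X\setminus q^{-1}(\bar x_0))$ and descends to a nondegenerate equivariant map on $C_0(q^{-1}(\bar x_0))\cong C_0(\widehat{G}/H)$ with $H$ compact, and connectedness then gives $H=\{1\}$ as you say. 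Second, equivariance alone only yields $\widehat{\om}_\chi(u_tv_t^*)=u_tv_t^*$, i.e.\ $u_tv_t^*$ is an \emph{invariant unitary multiplier}; to conclude that it is a scalar $c(t)$ you must invoke Lemma~\ref{lem-fix} a second time (possibly after replacing $v_t$ by $v_{-t}=v_t^*$, depending on the sign convention for the translation action). With these two points supplied, and noting that $c$ is Borel because $t\mapsto u_t$ is strictly Borel and $t\mapsto v_t$ strictly continuous, the identity $\om(s,t)=c(s)c(t)\overline{c(s+t)}$ follows and the proof closes.
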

\begin{proof}
If $\om$ is trivial, then $C^*(G,\om)\cong C_0(\widehat{G})$ with the usual translation action of $\widehat{G}$, which is weakly proper.

Suppose now that $\om$ is nontrivial. By the structure theorem for locally compact abelian groups
we have $G\cong\R^n\times K$ for some connected compact group $K$. As in the proof of
Theorem \ref{thm-Rieffel proper} we can argue that $\om$ is similar to a cocycle $\om_{\R^n}\cdot 1_K$,
and hence $C^*(G,\om)\cong C^*(\R^n,\om_{\R^n})\otimes C_0(\widehat{K})$.
Assume now that there exists a $\widehat{G}$-equivariant nondegenerate \Star{}homomorphism
$\phi:C_0(X)\to \M(C^*(G,\om))$. Then the restriction of the dual action to the factor $\widehat{\R^n}$ in
$\widehat{G}=\widehat{\R^n}\times \widehat{K}$ induces the structure a weakly proper action
of $\R^n\cong \widehat{\R^n}$ on $C^*(G,\om)\cong C^*(\R^n,\om_{\R^n})\otimes C_0(\widehat{K})$ such that the action on the
second factor is trivial. Evaluation of the second factor at the trivial character $1_K\in \widehat{K}$
induces an $\R^n$-equivariant quotient map $C^*(G,\om)\to C^*(\R^n,\om_{\R^n})$, which then induces
the structure  $\varphi:C_0(X)\to \M(C^*(\R^n,\om))$ of a weakly proper action of $\R^n$ on $C^*(\R^n,\om_{\R^n})$.

We need to show that this is impossible.
For this observe that $\R^n$ equipped with the translation action
 of $\R^n$ is a model for the
 universal proper $\R^n$-space, i.e., if $X$ is any proper $\R^n$-space, then there exists an $\R^n$-equivariant
 continuous map $\psi:X\to \R^n$. This  follows from the well know fact that any proper $\R^n$-space
 is a principal (i.e., locally trivial) $\R^n$-bundle (e.g., by Palais's slice theorem), and that any principal $\R^n$-bundle is trivial
 by contractibility of $\R^n$ (e.g., see \cite{Huse}).  Hence we obtain a nondegenerate $\R^n$-map $\psi^*: C_0(\R^n)\to C_b(X)=\M(C_0(X))$
 by $\psi^*(f)=f\circ \psi$ and composing this with $\varphi$ we may assume without loss of generality that
 $X=\R^n$ (see also \cite{Buss-Echterhoff:Imprimitivity}*{Remark~3.13(d)}).

 Assuming this we see that our assumption implies that $C^*(\R^n,\om)$ is  a weakly proper $\R^n\rtimes \R^n$-algebra
 and hence it follows from Landstad duality for coactions of $\R^n$ (see \cite{Quigg}*{Theorem 3.3} or
 \cite{Buss-Echterhoff:Exotic_GFPA}), which in the present case is just Landstad duality for
 actions of $\widehat{\R^n}\cong \R^n$, that there exists an action $\alpha$ of $\R^n$ on the generalised fixed-point algebra
 $C^*(\R^n,\om)^{\R^n}$  such
 that $C^*(\R^n,\om)\cong C^*(\R^n,\om)^{\R^n}\rtimes_\alpha\R^n$. But it follows from the
 construction of this fixed-point algebra (e.g., see \cite{Buss-Echterhoff:Exotic_GFPA}) that it must
 lie in the classical fixed-point algebra $\M(C^*(\R^n,\om))^{\R^n}$ which is $\C$ by Lemma \ref{lem-fix}.
  But the only action on $\C$ is the trivial one, and we conclude that
$C^*(\R^n,\om)\cong \C\rtimes \R^n\cong C^*(\R^n)$ is commutative, which contradicts
 the fact that $\om$ is nontrivial.
\end{proof}

Since actions of compact groups $K$ are always proper in any given sense (they are always Kasparov proper for the proper
$K$-space $\{\pt\}$), it is clear that for discrete groups $G$ the dual actions of $\widehat{G}$ on $C^*(G,\om)$
are always weakly proper. Indeed, this observation can be extended as follows:

\begin{remark}
 Let $G=\R^n\times H$ be any abelian locally compact  group such that the restriction of $\om$
 to $\R^n$ is trivial. Then as in the proof of Theorem \ref{thm-Rieffel proper} we may argue that
 there exists an open subgroup $M=\R^n\times K$ such that the restriction of $\om$ to
 $M$ is trivial. Then the dual action of $\widehat{M}$ on $C^*(M,\om)\cong C_0(\widehat{M})$ is
 weakly proper (with $X=\widehat{M}$). Using the fact that $\widehat{M}=\widehat{G}/N$
 for the compact subgroup $N:=\widehat{G/M}$, we see that the
 translation action of $\widehat{G}$ on $\widehat{M}$ is proper, too.
Hence, the restriction of $\lambda_\om$ to $M$ provides a $\widehat{G}$-equivariant
imbedding of $C_0(\widehat{M})\cong C^*(M,\om)$ into $C^*(G,\om)$, which proves that
the dual action of $\widehat{G}$  on $C^*(G,\om)$ is weakly proper.

Of course one might wonder, whether the converse of this observation is true: Is
the dual action of $\widehat{G}$ on $C^*(G,\om)$ weakly proper if and only if
the restriction of $\om$ to $\R^n$ is trivial?

Note that there exist quite interesting twisted group algebras given by such examples.
For instance, let $\om$ be the Heisenberg-cocycle on $\R\times\Q$ given by the formula
$$\om\big((s,q), (t, r)\big)=e^{2\pi i sr}.$$
Then the twisted group algebra $C^*(\R\times\Q, \om)$ is isomorphic to the crossed product
$C_0(\R)\rtimes \Q$ where $\Q$ acts by translation on $\R$. Since this action is minimal
(i.e., all orbits are dense), this algebra is simple.
\end{remark}

\section{Exel-Meyer proper actions and counterexamples}\label{sec:counter}

In this section we want to use our characterisation of Rieffel proper actions
to show that certain examples of Exel-Meyer proper actions  as discussed by
Meyer in \cite{Meyer:generalised_Fixed}  are actually Rieffel proper. This will provide
us with examples in which a given $G$\nb-algebra $(A,\alpha)$ has infinitely many
different dense subspaces $\mathcal R_i\subseteq A$, $i\in I$,  such that
$\alpha:G\to \Aut(A)$ is Rieffel proper with respect to all $\Rel_i$
as in Proposition \ref{prop:CharacterizationRieffelProperActions}
but with pairwise non-isomorphic generalised fixed-point algebras. As we shall see, such examples
can even occur if all such Rieffel proper actions are saturated. In this case all fixed-point algebras
have to be  Morita-equivalent, since they are Morita equivalent to $A\rtimes_rG$.

We start with a brief introduction to the theory of Exel-Meyer proper actions
as defined by Exel and Meyer in \cite{Exel:SpectralTheory} and \cite{Meyer:Equivariant, Meyer:generalised_Fixed}.
Such actions provide generalisations of Rieffel proper actions which still allow the
construction of generalised fixed-point algebras $A^G$.

 Let $(B,\beta)$ be a $G$\nb-algebra. In what follows we  realise the left regular representation $\lambda^B\colon C_c(G,B)\to \Lb(L^2(G,B))$ by the formula
\begin{equation}\label{eq-regular}\lambda^B_\varphi(f)|_t\defeq \int_G\Delta(s)^{-1}\beta_{t^{-1}}(\varphi(ts^{-1}))f(s)ds\quad \varphi\in C_c(G,B), f\in L^2(G,B).
\end{equation}
Recall that the reduced crossed product $B\rtimes_{\beta,r}G$ can be defined
as the closure of $\lambda^B(C_c(G,B))$ inside $\Lb(L^2(G,B))$.
More generally, we say that a continuous  function $\varphi:G\to B$ is a \emph{bounded symbol} if the integral
operator $\lambda^B_\varphi$  of \eqref{eq-regular}, which makes always sense for $f\in C_c(G,B)$,  extends to an
adjointable operator in $\Lb(L^2(G,B))$.
Such an integral operator is  called \emph{Laurent operator} with symbol $\varphi$, a terminology introduced in \cite{Exel:SpectralTheory}.
We shall often identify bounded symbol functions $\varphi$ with the corresponding operator $\lambda^B_\varphi$.

Assume now that $(\E,\gamma)$  is a $G$\nb-equivariant Hilbert $B$-module.
Given $\xi\in \E$, we define linear operators:
$$\kket{\xi}\colon \contc(G,B)\to \E,\quad \kket{\xi}f\defeq \int_G\Delta(t)^{-1/2}\gamma_{t^{-1}}(\xi)f(t)dt,$$
and
$$\bbra{\xi}\colon \E\to \cont(G,B),\quad \bbra{\xi}\eta(t)\defeq \Delta(t)^{-1/2}\braket{\gamma_{t^{-1}}(\xi)}{\eta}.$$
We say that $\xi$ is \emph{square integrable} if $\kket{\xi}$ extends to an adjointable operator $L^2(G,B)\to \E$.
This is equivalent to saying that the continuous function $\bbra{\xi}\eta$ lies in $L^2(G,B)$;
and in this case $\bbra{\xi}$ is automatically the adjoint operator of $\kket{\xi}$.
It is easy to see that $\kket{\xi}$ and $\bbra{\xi}$ are $G$\nb-equivariant operators with respect to the given $G$\nb-action $\gamma$ on $\E$ and the
$\beta$-compatible $G$\nb-action $(\rho\otimes\beta)_t(f)(s)\defeq \Delta(t)^{1/2}\beta_t(f(st))$ on the Hilbert $B$-module $L^2(G,B)$.

The $G$\nb-equivariant Hilbert $B$-module $(\E,\gamma)$ is called \emph{square integrable} if the space $\E_\si$ of square-integrable
elements is dense in $\E$. The theory of square-integrable modules is developed in detail by Meyer in \cite{Meyer:Equivariant, Meyer:generalised_Fixed}. Actually, in the papers \cite{Meyer:Equivariant,Meyer:generalised_Fixed} the modular function and the inverses do not show up in the definition of $\kket{\xi}$ or $\bbra{\xi}$, i.e., $\gamma_{t}(\xi)$ appears in place of $\Delta(t)^{-1/2}\gamma_{t^{-1}}(\xi)$ above.
The reason is that Meyer uses the left regular representation $\lambda$ as the \emph{standard} representation of $G$ on $L^2(G)$, while we use  the  right regular representation $\rho$ instead. The above formulas translate into Meyer's formulas via the
unitary intertwiner $U:L^2(G)\to L^2(G); (Uf)(t)\defeq \Delta(t)^{-1/2}f(t^{-1})$ between $\lambda$ and $\rho$.
The operators $\kket{\xi}\circ U$ and $U\circ\bbra{\xi}$ are then exactly the operators used by Meyer in \cite{Meyer:generalised_Fixed}. Our convention follows the paper \cite{Exel:SpectralTheory} by Exel, from which the basic ideas in \cite{Meyer:generalised_Fixed} are built.

A $G$\nb-algebra $(A,\alpha)$, when viewed as a $G$\nb-equivariant Hilbert $A$-module in the canonical way, is square-integrable if and only if it is integrable in the sense of Definition \ref{def-integrable}.
More generally, it is proved in \cite{Meyer:Equivariant} that a $G$\nb-equivariant Hilbert $B$-module $(\E,\gamma)$ is square integrable if and only if the $G$\nb-algebra of compact operators $A=\K(\E)$ with $G$\nb-action $\alpha=\Ad\gamma$ is integrable.
Moreover, if $\xi,\eta\in \E_\si$, then
the rank-one operator $\ket{\xi}\bra{\eta}\in \K(\E)$ defined by $\ket{\xi}\bra{\eta}\zeta=\xi\braket{\eta}{\zeta}_{B}$ is $\alpha$-integrable and
$$\int_G^\su\alpha_t(\ket{\xi}\bra{\eta})\, dt=\kket{\xi}\bbra{\eta},$$
where $\kket{\xi}\bbra{\eta}$ denotes the composition  $\kket{\xi}\circ\bbra{\eta}\in \Lb_A^G(A)=\M(A)^G$.

To see the connection of integrability with Rieffel properness it is important to
describe in detail the composition
 $\bbra{\xi}\circ\kket{\eta}$ for all $\xi,\eta\in \E_{\si}$, which is a $G$\nb-equivariant operator on $L^2(G,B)$, that is, an element of the
fixed-point algebra $\M(B\otimes\K(L^2G))^G=\Lb^G_B(L^2(G,B))$.
In general, if  $\xi,\eta\in \E$, we define $\bbraket{\xi}{\eta}$ to be the continuous function $t\mapsto \Delta(t)^{-1/2}\braket{\xi}{\gamma_t(\eta)}$.
We then compute for $f\in C_c(G,B)$:
\begin{alignat*}{2}
\bbra{\xi}\circ\kket{\eta}(f)|_t&=\Delta(t)^{-1/2}\braket{\gamma_{t^{-1}}(\xi)}{\kket{\eta}f}\\
&=\int_G\Delta(ts)^{-1/2}\braket{\gamma_{t^{-1}}(\xi)}{\gamma_{s^{-1}}(\eta)}f(s)\, ds\\
&=\int_G\Delta(s)^{-1}\beta_{t^{-1}}(\Delta(ts^{-1})^{-1/2}\braket{\xi}{\gamma_{ts^{-1}}(\eta)})f(s)\, ds\\
&=\int_G\Delta(s)^{-1}\beta_{t^{-1}}(\bbraket{\xi}{\eta}(ts^{-1}))f(s)\, ds\\
&=\lambda^B_{\bbraket{\xi\,}{\,\eta}}(f)|_t,
\end{alignat*}
so that we get the equation $\bbraket{\xi}{\eta}=\bbra{\xi}\circ\kket{\eta}$ for $\xi,\eta\in \E_\si$ if we identify
the symbol $\bbraket{\xi}{\eta}=[t\mapsto \Delta(t)^{-1/2}\braket{\xi}{\gamma_t(\eta)}]$ with the corresponding Laurent operator
$\lambda^B_{\bbraket{\xi}{\eta}}$.
It is easy to see that for every bounded symbol function $\varphi\in \cont(G,B)$, $\lambda^B_\varphi$ is always an element of the fixed-point \cstar{}subalgebra $\Lb^G(L^2(G,B))=\M(B\otimes\K(L^2G))^G$. In particular, $B\rtimes_{\beta,r}G\sbe \M(B\otimes\K(L^2G))^G$, where $G$ acts on  $B\otimes\K(L^2G)$ via $\beta\otimes\Ad\rho$.
But for a bounded symbol $\varphi$, the Laurent operator $\lambda^B_\varphi$ need not be in $B\rtimes_{\beta,r}G$.
The following definition is the main point of the above discussion:

\begin{definition}[Exel-Meyer]
We say that a subspace $\Rel\sbe \E_\si$ of a $G$\nb-equivariant Hilbert $B$-module $(\E,\gamma)$ is \emph{relatively continuous} if $\bbraket{\Rel}{\Rel}\sbe B\rtimes_{\beta,r}G$.
We say that the action $(\E,\gamma)$ is \emph{Exel-Meyer proper} if there is a dense relatively continuous subspace $\Rel\sbe \E$.

In particular, if $(A,\alpha)$ is a $G$\nb-C*-algebra, we say that $\alpha$ is  \emph{Exel-Meyer proper} if there is a
dense relatively continuous subspace $\Rel\sbe A_{\si}$, where we view $(A,\alpha)$ as a $G$\nb-equivariant Hilbert $A$-module.
\end{definition}

In the above form, these actions were introduced by Meyer in \cite{Meyer:generalised_Fixed}, but the essential ideas are already contained in Exel's paper \cite{Exel:SpectralTheory} who defined relative continuity for actions of locally compact \emph{abelian} groups on \cstar{}algebras only.
The Exel-Meyer-proper actions are, in a sense, the most general proper actions which allow the construction of generalised fixed-point algebras:

\begin{definition}
The \emph{generalised fixed-point algebra} associated to a relatively continuous subspace $\Rel\sbe \E$ is, by definition, the \cstar{}subalgebra $\Fix(\E,\Rel)$ of $\Lb^G(\E)=\M(\K(\E))^G$ generated by the operators $\kket{\xi}\bbra{\eta}=\int_G^\su\ket{\gamma_t(\xi)}\bra{\gamma_t(\eta)}\, dt$ with $\xi,\eta\in \Rel$.
\end{definition}

The generalised fixed-point algebra $\Fix(\E,\Rel)$ is always Morita equivalent to an ideal in $B\rtimes_{\beta,r}G$.
The construction of the bimodule $\F(\Rel)$ implementing this equivalence can be performed essentially in the same way as we did for Rieffel proper actions: take $\tilde\Rel\defeq \Rel*\contc(G,B)$ and endow it with the usual right $\contc(G,B)$-convolution action and the inner product $\bbraket{\cdot}{\cdot}$ and complete it to a right Hilbert $B\rtimes_{\beta,r}G$-module $\F(\Rel)$. The algebra of compact operators on $\F(\Rel)$ is then canonically isomorphic to $\Fix(\E,\Rel)$ with left action induced by the left action of $\Lb(\E)$ on $\Rel$ and the left-inner product $(\xi,\eta)\mapsto \kket{\xi}\bbra{\eta}$.
The details can be found in \cite{Meyer:generalised_Fixed}*{\S6}.

If $(\E, \gamma)$ is a $G$\nb-equivariant Hilbert $(B,\beta)$-module, there is an important connection  between
relatively continuous dense subsets $\mathcal R_\E\subseteq \E_{\si}$ and relatively continuous dense subsets
${\mathcal R}_\K\subseteq \K(\E)_{\si}$ given as follows:
If $\mathcal R_\E\subseteq \E_{\si}$ is relatively continuous, then it is shown in \cite{Meyer:generalised_Fixed}*{Corollary 7.2} that
$$\mathcal R_\K:=\spn\{\ket{\xi}\bra{\eta}: \xi\in \mathcal R_\E, \eta\in \E\}$$
is a dense relatively continuous subset of $\K(\E)_{\si}$ with respect to the action $\alpha=\Ad\gamma$ such that the
corresponding Hilbert $\K(\E)\rtimes_{\Ad\gamma, r}G$-module $\F(\mathcal R_\K)$ satisfies
$$ \F(\mathcal R_\K)\cong \F(\mathcal R_\E)\otimes_{B\rtimes_rG}(\E^*\rtimes_r G).$$
In particular, since $\K(\E^*\rtimes_rG)$ is an ideal in $B\rtimes_rG$, it follows that
\begin{equation}\label{eq-Fixed}
\begin{split}
\K(\E)^G:&=\Fix(\K(\E), \mathcal R_\K)=\K(\F(\mathcal R_\K))=\K\big(\F(\mathcal R_\E)\otimes_{B\rtimes_rG}(\E^*\rtimes_r G)\big)\\
&=\K(\F(\mathcal R_\E))=\Fix(\E, \mathcal R_\E).
\end{split}
\end{equation}
Conversely, if $(A,\alpha)$ is any square{-}integrable $G$\nb-C*-algebra, $\phi:A\to\L(\E)$ is a nondegenerate $G$\nb-equivariant
*-homomorphism, and $\mathcal R_A\subseteq A_{\si}$ is a dense relatively continuous subspace of $A$, then it is shown in
\cite{Meyer:generalised_Fixed}*{Corollary 7.1} that
$\mathcal R_\E:=\phi(\mathcal R_A)\E$ is a dense relatively continuous subspace of $\E_\si$ such that
$$\F(\mathcal R_\E)\cong\F(\mathcal R_A)\otimes_{A\rtimes_rG}(\E\rtimes_r G)$$
as Hilbert $B\rtimes_rG$-modules. In particular, if $A=\K(\E)$ we get
$$
\Fix(\E, \mathcal R_\E)=\K(\F(\mathcal R_\E))\cong \K(\F(\mathcal R_\K))=\Fix(\K(\E), \mathcal R_\K)
=\K(\E)^G.
$$

We now want to discuss a similar correspondence between subsets $\mathcal R_\E\subseteq \E$ and
 $\mathcal R_\K\subset \K(\E)$ which induce Rieffel properness as in Proposition \ref{prop:CharacterizationRieffelProperActions}.
 Motivated by the above discussion on relatively continuous subsets, we introduce the following:

 \begin{definition}\label{def-relativeL1}
 Let $(\E,\gamma)$ be a $G$\nb-equivariant Hilbert $(B,\beta)$-module. Then a subset $\mathcal R\subseteq \E$ is said
 to be {\em relatively $L^1$}  if for each $\xi,\eta\in \mathcal R$ the function
 $\bbraket{\xi}{\eta}=\big[s\mapsto \Delta(s)^{-1/2}\braket{\xi}{\gamma_s(\eta)}_B\big]$ lies
 in $L_\Delta^1(G,B)$.
 \end{definition}

 Of course, if  $(A,\alpha)$ is a $G$\nb-algebra, viewed as a $G$\nb-equivariant Hilbert $(A,\alpha)$-module,
 then a subset $\mathcal R\subseteq A$ is relatively $L^1$ in the sense of the above definition if and only if
 it satisfies condition (P1) of Proposition \ref{prop:CharacterizationRieffelProperActions}. Hence
 there exists a dense relatively $L^1$ subspace $\mathcal R\subseteq A$ if and only if the action
 $\alpha:G\to\Aut(A)$ is Rieffel proper.
  Motivated by Proposition \ref{prop:CharacterizationRieffelProperActions} we introduce the following:
 \begin{definition}\label{def-propermodule}
 Suppose that $(\E,\gamma)$ is a $G$\nb-equivariant Hilbert $(B,\beta)$-module. We  say that
 the action $\gamma: G\to \Aut(\E)$ is Rieffel proper if there exists a dense
 relatively $L^1$-subspace $\mathcal R\subseteq \E$.
 \end{definition}

 It follows directly from the definition and the fact that $L^1(G,B)\subseteq B\rtimes_rG$ that every dense relatively $L^1$\nb-subspace $\Rel\subseteq\E$
 is also relatively continuous in the sense of Exel-Meyer. Therefore it follows from the
 results of Meyer in \cite{Meyer:generalised_Fixed} that $\tilde{\mathcal R}=\mathcal R*C_c(G,B)$ completes to
 a Hilbert $B\rtimes_{\beta,r}G$-module $\F(\mathcal R)$ with respect to the $B\rtimes_rG$-valued inner
 product
 $$\bbraket{\xi}{\eta}=\Big( s\mapsto \Delta(s)^{-1/2}\braket{\xi}{\gamma_s(\eta)}\Big)\in L^1(G,B)\subseteq B\rtimes_rG.$$
Since $C_c(G,B)$ is dense in $L_\Delta^1(G,B)\subseteq B\rtimes_rG$,
it follows that every element in $\mathcal R* L^1_\Delta(G,B)$ can be approximated by
 elements in $\mathcal R* C_c(G,B)$ in the norm induced by the $B\rtimes_rG$-valued inner product $\bbraket{\cdot}{\cdot}$.
 Hence, if $\mathcal R_A\subseteq A$ is a dense relatively $L^1$\nb-subspace of a $G$\nb-C*-algebra $(A,\alpha)$,
 then $\F(\mathcal R_A)$ coincides with the module as discussed preceding Proposition \ref{prop-uniquemodule}.
 In particular, the corresponding fixed-point algebras coincide.

\begin{lemma}\label{lem-E-to-KE}
Suppose that $(\E,\gamma)$ is a $G$\nb-equivariant Hilbert $(B,\beta)$-module and that
$\mathcal R_\E\subseteq \E$ is a dense relatively $L^1$\nb-subspace of $\E$.
Then
$$\mathcal R_\K:=\spn\{\ket{\xi}\bra{\eta}: \xi\in \mathcal R_\E, \eta\in \E\}$$
is a dense relatively  $L^1$\nb-subspace of $\K(\E)$ such that
$$ \F(\mathcal R_\K)\cong \F(\mathcal R_\E)\otimes_{B\rtimes_rG}(\E^*\rtimes_r G).$$
Conversely, if $(A,\alpha)$ is a $G$\nb-C*-algebra such that there exists a nondegenerate $G$\nb-equivariant
*-homomorphism $\phi:A\to\L(\E)$ and if $\mathcal R_A$ is a dense relatively $L^1$\nb-subspace of $A$,
then $\mathcal R_\E=\phi(\mathcal R_A)\E$ is a dense relatively $L^1$-subspace of $\E$ such
that
$$\F(\mathcal R_\E)\cong\F(\mathcal R_A)\otimes_{A\rtimes_rG}(\E\rtimes_r G).$$
\end{lemma}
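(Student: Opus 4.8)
The plan is to mimic the argument already used for the relatively continuous case (which is Corollary~7.1 and Corollary~7.2 of \cite{Meyer:generalised_Fixed}, recalled just before this lemma), but to carry along the extra integrability information encoded in $L^1_\Delta(G,B)$ rather than just $B\rtimes_{\beta,r}G$. The key computational input is the behaviour of the symbols $\bbraket{\cdot}{\cdot}$ under the operations $\ket{\cdot}\bra{\cdot}$, $\phi$, and tensoring with modules; these are the same formulas as in \eqref{eq-formulas} and \eqref{eq-Fixed}, and the point is simply that $L^1_\Delta(G,B)$ is a \Star{}subalgebra of $B\rtimes_{\beta,r}G$ stable under left and right convolution by $C_c(G,B)$.

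First, for the forward direction: given $\xi_1,\xi_2\in\mathcal R_\E$ and $\eta_1,\eta_2\in\E$, I would compute the symbol
$$\bbraket{\ket{\xi_1}\bra{\eta_1}}{\ket{\xi_2}\bra{\eta_2}}(s)=\Delta(s)^{-1/2}\braket{\ket{\xi_1}\bra{\eta_1}}{\gamma_s(\ket{\xi_2}\bra{\eta_2})}$$
in $\K(\E)$ and identify it (as in \eqref{eq-formulas}, second formula) with $i_\E(\eta_1)^*\,\bbraket{\xi_1}{\xi_2}\,i_\E(\eta_2)$, i.e.\ with the symbol $\bbraket{\xi_1}{\xi_2}\in L^1_\Delta(G,B)$ compressed by $\eta_1,\eta_2$. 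Since $\bbraket{\xi_1}{\xi_2}\in L^1_\Delta(G,B)$ and compression by elements of $\E$ preserves membership in $L^1_\Delta(G,B)$ (pointwise, $s\mapsto\braket{\gamma_{s^{-1}}\eta_1}{x(s)}$ for $x\in L^1_\Delta(G,B)$ is still in $L^1_\Delta$), we conclude $\mathcal R_\K$ is relatively $L^1$. Density of $\mathcal R_\K$ in $\K(\E)$ follows from density of $\mathcal R_\E$ in $\E$. The isomorphism $\F(\mathcal R_\K)\cong\F(\mathcal R_\E)\otimes_{B\rtimes_rG}(\E^*\rtimes_rG)$ is then inherited directly from Meyer's Corollary~7.2, since the underlying relatively continuous structures agree and passing to the $L^1$ completion changes nothing (as observed in the paragraph before the lemma: $\mathcal R*C_c(G,B)$ is dense in $\mathcal R*L^1_\Delta(G,B)$ in the relevant norm).

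For the converse, given $\phi:A\to\L(\E)$ nondegenerate $G$\nb-equivariant and $\mathcal R_A\subseteq A$ dense relatively $L^1$, I would set $\mathcal R_\E=\phi(\mathcal R_A)\E$ and, for $\xi=\phi(a)e$, $\eta=\phi(b)f$ with $a,b\in\mathcal R_A$, $e,f\in\E$, compute
$$\bbraket{\xi}{\eta}(s)=\Delta(s)^{-1/2}\braket{\phi(a)e}{\gamma_s(\phi(b)f)}=\Delta(s)^{-1/2}\braket{e}{\phi(a^*\alpha_s(b))\gamma_s(f)},$$
so that $\bbraket{\xi}{\eta}$ is the image of the symbol $\bbraket{a}{b}\in L^1_\Delta(G,A)$ under the nondegenerate \Star{}homomorphism $\phi\rtimes_rG$, compressed by $e,f$; this lies in $L^1_\Delta(G,B)$ because $\phi$ sends $L^1_\Delta(G,A)$ into $L^1_\Delta(G,B)$ pointwise and $\E$-compression preserves the class. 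Density of $\mathcal R_\E$ in $\E$ is nondegeneracy of $\phi$. The module isomorphism $\F(\mathcal R_\E)\cong\F(\mathcal R_A)\otimes_{A\rtimes_rG}(\E\rtimes_rG)$ is again Meyer's Corollary~7.1 applied to the underlying relatively continuous data, with the $L^1$ refinement being transparent.

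**Main obstacle.** The real content is essentially just bookkeeping: the delicate point, and the only place requiring genuine care, is verifying that membership in $L^1_\Delta(G,B)$ (as opposed to mere membership in $B\rtimes_{\beta,r}G$) is preserved under the two operations $x\mapsto i_\E(\eta_1)^*\,x\,i_\E(\eta_2)$ (compression by Hilbert module elements) and $x\mapsto(\phi\rtimes_rG)(x)$ compressed by $\E$. Both reduce to the pointwise estimate $\|\braket{\gamma_{s^{-1}}\eta_1}{x(s)}\|\le\|\eta_1\|\,\|x(s)\|$ together with the analogous estimates after multiplying by $\Delta^{\pm1/2}$, which shows the new symbol is dominated pointwise in norm by a scalar multiple of $\|x(\cdot)\|$ and hence stays in $L^1\cap\Delta^{1/2}L^1\cap\Delta^{-1/2}L^1=L^1_\Delta$. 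Everything else — the two module isomorphisms — is quoted verbatim from \cite{Meyer:generalised_Fixed}*{Corollaries 7.1 and 7.2}, since the relatively continuous structures underlying the relatively $L^1$ ones coincide and the passage to $\F(\cdot)$ is insensitive to the $L^1$ refinement.
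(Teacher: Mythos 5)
Your proposal is correct and follows essentially the same route as the paper: the paper likewise reduces everything except the relative-$L^1$ conditions to Meyer's Corollaries 7.1 and 7.2 (density and the module isomorphisms being unchanged since $C_c(G,B)$ is dense in $L^1_\Delta(G,B)$ in the relevant norm), and verifies those conditions via exactly the pointwise norm estimates you single out, namely $\bigl\|(\ket{\xi}\bra{\eta})^*\circ\Ad\gamma_s(\ket{\xi'}\bra{\eta'})\bigr\|\le\|\braket{\xi}{\gamma_s(\xi')}_B\|\,\|\eta\|\,\|\eta'\|$ and $\|\braket{\phi(a)\xi}{\gamma_s(\phi(b)\eta)}_B\|\le\|\alpha_s(b^*)a\|\,\|\xi\|\,\|\eta\|$. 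The only cosmetic caveat is that your intermediate ``compression'' expressions should be read via the rank-one identity $(\ket{\xi}\bra{\eta})^*\circ\Ad\gamma_s(\ket{\xi'}\bra{\eta'})=\ket{\eta\cdot\braket{\xi}{\gamma_s(\xi')}_B}\bra{\gamma_s(\eta')}$, so the symbol is $\K(\E)$-valued rather than $B$-valued, but the norm bound you invoke is precisely the one the paper uses.
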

\begin{proof} For the proof we only need to check the $L^1$ conditions for $\mathcal R_\K$ and $\mathcal R_\E$,
respectively, since everything else will follow from the corresponding results for Exel-Meyer proper actions
as discussed above.

So suppose that $\mathcal R_\E\subseteq \E$ is a relatively $L^1$\nb-subspace of $\E$.
Recall that $\ket{\xi}\bra{\eta}\in \K(\E)$ is the operator defined by
$$\ket{\xi}\bra{\eta}\zeta=\xi\cdot \braket{\eta}{\zeta}_B$$
for all $\zeta\in \E$. Then a short computation shows that for $\xi,\xi', \eta, \eta'\in \E$ we get the equation
$$\ket{\xi}\bra{\eta}\circ  \ket{\xi'}\bra{\eta'}=\ket{\xi\cdot \braket{\eta}{\xi'}_B}\bra{\eta'}.$$
Using this we get for all $\xi,\xi'\in \mathcal R_\E$ and $\eta,\eta'\in \E$ that
\begin{align*}
(\ket{\xi}\bra{\eta})^*\circ \Ad\gamma_s(\ket{\xi'}\bra{\eta'})&=
\ket{\eta}\bra{\xi}\circ  \ket{\gamma_s(\xi')}\bra{\gamma_s(\eta')}\\
&=\ket{\eta\cdot \braket{\xi}{\gamma_s(\xi')}_B}\bra{\gamma_s(\eta')},
\end{align*}
from which it follows that
$$
\left\|(\ket{\xi}\bra{\eta})^*\circ \Ad\gamma_s(\ket{\xi'}\bra{\eta'})\right\|\leq \|\braket{\xi}{\gamma_s(\xi')}_B\|\|\eta\|\|\eta'\|.
$$
Since $\xi,\xi'\in \mathcal R_\E$, the right hand side of this equation (and hence also the left hand side) defines a function in $L^1_\Delta(G)$.
Hence $\mathcal R_\K$ is relatively $L^1$.

Conversely, if $\mathcal R_A\subseteq A$ is relatively $L^1$ and if $a,b\in \mathcal R_A$ and $\xi,\eta\in \E$, then
$$
s\mapsto \|\braket{\phi(a)\xi}{\gamma_s(\phi(b)\eta)}_B\|=\|\braket{\phi(\alpha_s(b^*)a)\xi}{\gamma_s(\eta)}_B\|\leq
\|\alpha_s(b^*)a\| \|\xi\|\|\eta\|
$$
lies in $L^1_\Delta$, hence $\mathcal R_\E=\phi(\mathcal R_A)\E$ is relatively $L^1$.
\end{proof}

We shall now use the above correspondence between  dense relatively $L^1$\nb-subspaces of $\E$ and
dense relatively $L^1$\nb-subspaces of $\K(\E)$ to study certain examples of Meyer in
\cite{Meyer:generalised_Fixed}*{\S 8} in the context of Rieffel proper actions.
For this we let $(B,\beta)=(\C,\id)$, the complex numbers with trivial $G$\nb-action.
Then a $G$\nb-equivariant Hilbert $\C$-module is a pair $(\H, u)$ where $\H$ is a Hilbert space
and $u:G\to \U(\H)$ is a unitary representation of $G$ on $\H$.
Using the above lemma it follows that, if
we can find  dense relatively $L^1$-subspaces $\mathcal R^i\subseteq \H$, $i=1,2$,
with non-isomorphic fixed-point algebras $\Fix(\H,\mathcal R^i)=\K(\F(\mathcal R^i))$,
then the action $\Ad u:G\to \Aut(\K(\H))$ will be Rieffel proper with respect to
the  corresponding subspaces $\mathcal R^i_{\K}=\ket{\mathcal R_i}\bra{\H}\subseteq \K(\H)$, $i=1,2$,
 with non-isomorphic
modules $\F(\mathcal R^i_\K)$ and non-isomorphic fixed-point algebras
$$\K(\H)^G_i=\K(\F(\mathcal R^i_\K))\cong \Fix(\H, \mathcal R^i),$$
which follows from Lemma \ref{lem-E-to-KE} together with  (\ref{eq-Fixed}).

Following Meyer in \cite{Meyer:generalised_Fixed}*{\S 8} we look at the particular case
where $G=\Z^k$ and $(\H,u)=(\ell^2(\Z^k)^n, \rho^n)$, where $\rho^n$ denotes the $n$-fold
direct sum of the right regular representation $\rho:\Z^k\to \U(\ell^2(\Z^k))$ (here we replace $\lambda$,
used in \cite{Meyer:generalised_Fixed}*{\S 8}, by $\rho$ to make the example compatible with our
general policies as explained at the beginning of this section).

Assume that $\mathcal R\subseteq \ell^2(\Z^k)^n$ is a dense relatively
$L^1$\nb-subspace. Then the corresponding $C^*(\Z^k)$-valued inner
product $\bbraket{\xi}{\eta}$ on $\mathcal R$ is given by the function
$$m\mapsto \bbraket{\xi}{\eta}(m)= \braket{\xi}{\rho^n_m(\eta)}= \sum_{i=1}^n\left( \sum_{\nu\in \Z^k}\overline{\xi_i}(\nu)\eta_i(\nu+m)\right)
=\sum_{i=1}^n \overline{\xi}_i*\eta_i(m),$$
for $\xi,\eta\in \mathcal R$, where $\xi_i,\eta_i$ denote the $i$-th components of $\xi$ and $\eta$, respectively.
Using Fourier transform everywhere turns $\ell^2(\Z^k)^n$ into $L^2(\T^k)^n$ and $C^*(\Z^k)$ into $C(\T^k)$,
and the $C^*(\Z^k)$-valued inner product on $\mathcal R\subseteq \ell^2(\Z^k)^n$ is transformed into
the $C(\T^k)$-valued inner product on the subspace $\widehat{\mathcal R}=\{\widehat{\xi}: \xi\in \mathcal R\}\subset L^2(\T^k)^n$
with inner product
$$ \bbraket{\widehat{\xi}}{\widehat{\eta}}_{C(\T^k)}(z)=\sum_{i=1}^n \overline{\widehat{\xi}\,}_i\cdot\widehat{\eta}_i({z}).$$
The
Plancherel isomorphism $\ell^2(\Z^k)^n\cong L^2(\T^k)^n$
 intertwines the right regular representation $\rho$ of $\Z^k$ on $\ell^2(\Z^k)$
with  the unitary representation $\widehat\rho$ given by
$$(\widehat{\rho}_m \xi)(z)=z^{-m}\xi(z)\quad \forall z\in \T^k, m\in \Z^k,$$
where $z^{-m}:=z_1^{-m_1}\cdot z_2^{-m_2}\cdots z_k^{-m_k}\in \T$.
The dense relatively  continuous subspaces $\widehat{\mathcal R}\subseteq L^2(\T^k)^n$
with respect to the unitary representation $\widehat{\rho}^n$ have been studied in
detail by Meyer in \cite{Meyer:generalised_Fixed}*{\S 8} and by Buss and Meyer
in \cite{Buss-Meyer:Continuous}. In particular, we are interested in the following
two special examples:

\begin{example}\label{ex-subset}
Consider the case $n=1$. Let $S\subseteq \T^k$ be an open dense subspace of $\T^k$ with
full Haar measure. Then we get the following chain of dense subsets in $L^2(\T^k)\cong \ell^2(\Z^k)$:
$$\widehat{\mathcal R_S}=C_c^\infty(S)\subseteq C_c(S)\subseteq L^2(S)=L^2(\T^k),$$
where $C_c^\infty(S)$ denotes the set smooth functions with compact supports on $S$.
Then the $C(\T^k)$-valued inner products $\bbraket{\xi}{\eta}_{C(\T^k)}=\overline{\xi}\cdot\eta$ all lie
in
$$C_c^\infty(S)\subseteq I_S:=C_0(S)\subseteq C(\T^k).$$
Since the inverse Fourier-transform of any smooth function on $\T^k$ lies in $\ell^1(\Z^k)$
we see that the preimage $\mathcal R_S$ of $\widehat{\mathcal R_S}$ under the Fourier transform
is a dense relatively $L^1$\nb-subspace of $\ell^2(\Z^k)$.
As in \cite{Meyer:generalised_Fixed}*{\S 8} one checks that the module   $\F(\widehat{\mathcal R_S})$
is isomorphic to the standard $C_0(S)-C_0(S)$ equivalence bimodule $C_0(S)$, thus the
generalized fixed-point algebra $\K(\ell^2(\Z^k))^{\Z^k, \Ad\rho}=\Fix(\ell^2(\Z^k),\mathcal R_S)$ is
also isomorphic to $C_0(S)$. Since there exist infinitely many non-homeomorphic open dense subsets
$S\subseteq \T^k$,  there exist infinitely
many non-isomorphic generalised fixed-point algebras for the
Rieffel proper action  $\Ad\rho:\Z^k\to\Aut(\K(\ell^2(\Z^k))$.
\end{example}

One might observe that in the above example only the case $S=\T^k$ provides a
structure of a {\em saturated} Rieffel proper action in which the corresponding
Hilbert $C^*(\Z^k)$-module $\F(\mathcal R_S)$ is full. But the following slight alteration
of another example given by Meyer in \cite{Meyer:generalised_Fixed}*{\S 8} yield
examples of different  saturated Rieffel proper structures with non-isomorphic fixed-point algebras:

\begin{example}\label{ex-bundle}
We are now looking at structures $\mathcal R\subseteq \ell^2(\Z^k)^n$. Again we dualise
in order to consider subspaces $\widehat{\mathcal R}\subseteq L^2(\Z^k)^n$.
Suppose  that $p:\mathcal V\to \T^k$ is an $n$-dimensional complex hermitian vector bundle
over $\T^k$. Then the Hilbert space $L^2(\T^k, \mathcal V)$
of $L^2$-sections of $\mathcal V$ is isomorphic to $L^2(\T^k)^n$.
The easiest way to see this is to choose a partition $\{A_i:1\leq i\leq m\}$ of $\T^k$
with measurable sets $A_i\subseteq \T^k$ such that $\mathcal V|_{A_i}\cong A_i\times \C^n$.
Then both spaces are isomorphic to $\oplus_{i=1}^m L^2(A_i)^n$.
As pointed out in \cite{Meyer:generalised_Fixed}*{p.~190}, the subspace
$\widehat{\mathcal R}_{\mathcal V}:=C(\T^k,\mathcal V)\subseteq L^2(\T^k, \mathcal V)$
of continuous sections of the vector bundle $\mathcal V$ is a dense relatively continuous subset
of $L^2(\T^k, \mathcal V)\cong L^2(\T^k)^n$ with full $C(\T^k)\cong C^*(\Z^k)$-valued inner product,
which makes it into a $C(\T^k,\End(\mathcal V))-C(\T^k)$ equivalence bimodule.
Hence the inverse image $\mathcal R_{\mathcal V}$ of $\widehat{\mathcal R}_{\mathcal V}$ under
the Plancherel isomorphism is a dense relatively continuous subset of $\ell^2(\Z^k)^n$ with
corresponding generalised fixed-point algebra
$$\Fix(\ell^2(\Z^k)^n,\mathcal R_{\mathcal V})\cong C(\T^k, \End(\mathcal V)).$$
Now, if $p:\mathcal V\to\T^k$ is a smooth vector bundle, we can look at the subspace
$$\mathcal R_{\mathcal V}^\infty:=\{\xi\in \ell^2(\Z^k)^n:\widehat\xi\in C^\infty(\T^k,\mathcal V)\}.$$
Then the $C(\T^k)$-valued inner product $\bbraket{\widehat{\xi}}{\widehat{\eta}}_{C(\T^k)}$
for $\xi,\eta\in \mathcal R_{\mathcal V}^\infty$ lies in $C^\infty(\T^k)$ and therefore
$\bbraket{\xi}{\eta}=\F^{-1}\big(\bbraket{\widehat{\xi}}{\widehat{\eta}}_{C(\T^k)}\big)\in  \ell^1(\Z^k)$,
where $\F^{-1}$ denotes inverse Fourier transform.
 Hence
$\mathcal R_{\mathcal V}^\infty$ is a dense relatively $L^1$\nb-subspace of $\ell^2(\Z^k)^n$.
Since every continuous section $\widehat{\xi}:\T^k\to\mathcal V$ can be approximated
by  smooth sections with respect to the $C(\T^k)$-valued inner product,
the module $\mathcal F(\mathcal R_{\mathcal V}^\infty)$ coincides with $\F(\mathcal R_{\mathcal V})$,
hence the corresponding generalised fixed-point algebra is also $C(\T^k,\End(\mathcal V))$.
In particular, in case of the trivial bundle $\mathcal V=\T^k\times \C^n$ the fixed-point algebra
will be $C(\T^k, {\Mat_n}(\C))$.

Thus, in order to find different  structures for saturated Rieffel properness of the action
$\Ad\rho^n:\Z^k\to \Aut(\K(\ell^2(\Z^k)^n))$
with non-isomorphic fixed-point algebras, it suffices to find smooth $n$-dimensional vector bundles
$\mathcal V$ over $\T^k$ such that $C(\T^k, \End(\mathcal V))$ is not isomorphic to $C(\T^k, {\Mat_n}(\C))$.
As pointed out on the bottom of  \cite{Meyer:generalised_Fixed}*{p.~190}, if $k=n=2$ and if
$L_1$ is a smooth realisation of the line bundle on $\T^2$ corresponding to the generator
of $H^2(\T^2,\Z)$, which is given by gluing $(\T\times [0,1])\times \C$ at the endpoints
with respect to the smooth function
$$\T\times\{0\}\times\C\to \T\times \{1\}\times \C: (z, 0, v)\mapsto (z, 1, zv),$$
then for $\mathcal V= \C\oplus L_1$ we have $C(\T^n,\End(\mathcal V))\not{\!\!\cong} \; C(\T^n,{\Mat_2}(\C))$.
More generally, if we let $L_m$ denote the line bundle  over $\T^2$ given by gluing $(\T\times [0,1])\times \C$ at the
endpoints with respect to the smooth function $(z, 0, v)\mapsto (z, 1, z^mv)$, for $m\in \Z$, and putting
$\mathcal V_m= \C\oplus L_m$, then similar arguments as used by Meyer show that
the corresponding  generalised fixed-point algebras $C(\T^2,\End(\mathcal V_n))$ are
pairwise non-isomorphic. This yields infinitely many relatively $L^1$\nb-subspaces
 $\mathcal R_m\subseteq \ell^2(\Z^2)^2$ for the $\Z^2$-algebra $(\K(\ell^2(\Z^2)^2) , \Ad\rho^2)$
such that the corresponding fixed-point algebras are pairwise
non-isomorphic.
\end{example}

\begin{bibdiv}
  \begin{biblist}

\bib{Huef-Kaliszewski-Raeburn-Williams:Naturality_Rieffel}{article}{
  author={an Huef, Astrid},
  author={Kaliszewski, Steven P.},
  author={Raeburn, Iain},
  author={Williams, Dana P.},
  title={Naturality of Rieffel's Morita equivalence for proper actions},
  journal={Algebr. Represent. Theory},
  volume={14},
  date={2011},
  number={3},
  pages={515--543},
  issn={1386-923X},
  review={\MRref {2785921}{}},
  doi={10.1007/s10468-009-9201-2},
}

\bib{anHuef-Raeburn-Williams:Symmetric}{article}{
  author={an Huef, Astrid},
  author={Raeburn, Iain},
  author={Williams, Dana P.},
  title={A symmetric imprimitivity theorem for commuting proper actions},
  journal={Canad. J. Math.},
  volume={57},
  date={2005},
  number={5},
  pages={983--1011},
  issn={0008-414X},
  review={\MRref {2164592}{2006f:46067}},
}

\bib{Bag-Klep}{article}{
    AUTHOR = {Baggett, Larry},
    author={Kleppner, Adam},
     TITLE = {Multiplier representations of abelian groups},
   JOURNAL = {J. Functional Analysis},
    VOLUME = {14},
      YEAR = {1973},
     PAGES = {299--324},
   MRCLASS = {22D12},
  MRNUMBER = {0364537 (51 \#791)},
MRREVIEWER = {R. C. Busby},
}

\bib{Buss-Echterhoff:Exotic_GFPA}{article}{
  author={Buss, Alcides},
  author={Echterhoff, Siegfried},
  title={Universal and exotic generalised fixed-point algebras for weakly proper actions and duality},
  status={eprint},
  note={\arxiv{1304.5697}, to appear in Indiana Univ. Math. J.},
  date={2013},
}

  {\bib{Buss-Echterhoff:Imprimitivity}{article}{
  author={Buss, Alcides},
  author={Echterhoff, Siegfried},
 title={Imprimitivity theorems for weakly proper actions and duality},
  status={eprint},
  note={\arxiv {1305.5100}, to appear in Ergodic theory \& dynamical Systems},
  year={2013},
 }
}
{ \bib{Buss-Echterhoff:Mansfield}{article}{
  author={Buss, Alcides},
  author={Echterhoff, Siegfried},
 title={Weakly proper group actions, Mansfield's imprimitivity and twisted Landstad duality},
  status={eprint},
  note={\arxiv {1310.3934}, to appear in Trans. Amer. Math. Soc.},
  year={2013},
 }
 }

\bib{Buss-Meyer:Continuous}{article}{
  author={Buss, Alcides},
  author={Meyer, Ralf},
  title={Continuous spectral decompositions of Abelian group actions on \(C^*\)\nobreakdash-algebras},
  journal={J. Funct. Anal.},
  volume={253},
  date={2007},
  number={2},
  pages={482--514},
  issn={0022-1236},
  review={\MRref{2370086}{2009f:46091}},
  doi={10.1016/j.jfa.2007.04.009},
}

\bib{DE}{book}{
    AUTHOR = {Deitmar, Anton}
    author={Echterhoff, Siegfried},
     TITLE = {Principles of harmonic analysis},
    SERIES = {Universitext},
 PUBLISHER = {Springer, New York},
      YEAR = {2009},
     PAGES = {xvi+333},
      ISBN = {978-0-387-85468-7},
   MRCLASS = {43-01 (22Dxx 22E30 46J10 46L05)},
  MRNUMBER = {2457798 (2010g:43001)},
MRREVIEWER = {Krishnan Parthasarathy},
}

\bib{Exel:Unconditional}{article}{
  author={Exel, Ruy},
  title={Unconditional integrability for dual actions},
  journal={Bol. Soc. Brasil. Mat. (N.S.)},
  volume={30},
  number={1},
  date={1999},
  pages={99--124},
  issn={0100-3569},
  review={\MRref{1686980}{2000f:46071}},
  doi={10.1007/BF01235677},
}

\bib{Exel:SpectralTheory}{article}{
  author={Exel, Ruy},
  title={Morita--Rieffel equivalence and spectral theory for integrable automorphism groups of $C^*$\nobreakdash-algebras},
  journal={J. Funct. Anal.},
  volume={172},
  date={2000},
  number={2},
  pages={404--465},
  issn={0022-1236},
  doi={10.1006/jfan.1999.3537},
  review={\MRref{1753180}{2001h:46104}},
}

\bib{Huse}{book}{
    AUTHOR = {Husemoller, Dale},
     TITLE = {Fibre bundles},
    SERIES = {Graduate Texts in Mathematics},
    VOLUME = {20},
   EDITION = {Third edition},
 PUBLISHER = {Springer-Verlag, New York},
      YEAR = {1994},
     PAGES = {xx+353},
      ISBN = {0-387-94087-1},
   MRCLASS = {55-01 (19Lxx 55-02 55Rxx 57R20 57R22)},
  MRNUMBER = {1249482 (94k:55001)},
MRREVIEWER = {Jo{\v{z}}e Vrabec},
       DOI = {10.1007/978-1-4757-2261-1},
       URL = {http://dx.doi.org/10.1007/978-1-4757-2261-1},
}

\bib{Kleppner:Multipliers}{article}{
    AUTHOR = {Kleppner, Adam},
     TITLE = {Multipliers on abelian groups},
   JOURNAL = {Math. Ann.},
  FJOURNAL = {Mathematische Annalen},
    VOLUME = {158},
      YEAR = {1965},
     PAGES = {11--34},
      ISSN = {0025-5831},
   MRCLASS = {22.65},
  MRNUMBER = {0174656 (30 \#4856)},
MRREVIEWER = {J. M. G. Fell},
}

\bib{Kustermans-Vaes:Weight}{article}{
  author={Kustermans, Johan},
  author={Vaes, Stefaan},
  title={Weight theory for \(C^*\)\nobreakdash-algebraic quantum groups},
  status={eprint},
  note={\arxiv{math/9901063}},
  date={1999},
}

\bib{Mackey:UnitaryI}{article}{
    AUTHOR = {Mackey, George W.},
     TITLE = {Unitary representations of group extensions. {I}},
   JOURNAL = {Acta Math.},
  FJOURNAL = {Acta Mathematica},
    VOLUME = {99},
      YEAR = {1958},
     PAGES = {265--311},
      ISSN = {0001-5962},
   MRCLASS = {46.00 (22.00)},
  MRNUMBER = {0098328 (20 \#4789)},
MRREVIEWER = {F. I. Mautner},
}

\bib{Meyer:Equivariant}{article}{
  author={Meyer, Ralf},
  title={Equivariant Kasparov theory and generalised homomorphisms},
  journal={\(K\)\nobreakdash-Theory},
  volume={21},
  date={2000},
  number={3},
  pages={201--228},
  issn={0920-3036},
  review={\MRref{1803228}{2001m:19013}},
  doi={10.1023/A:1026536332122},
}

\bib{Meyer:generalised_Fixed}{article}{
  author={Meyer, Ralf},
  title={Generalized fixed point algebras and square-integrable group actions},
  journal={J. Funct. Anal.},
  volume={186},
  date={2001},
  number={1},
  pages={167--195},
  issn={0022-1236},
  review={\MRref{1863296}{2002j:46086}},
  doi={10.1006/jfan.2001.3795},
}

\bib{Packer-Raeburn:Stabilisation}{article}{
  author={Packer, Judith A.},
  author={Raeburn, Iain},
  title={Twisted crossed products of $C^*$\nobreakdash-algebras},
  journal={Math. Proc. Cambridge Philos. Soc.},
  volume={106},
  date={1989},
  pages={293--311},
  review={\MRref{1002543}{90g:46097}},
  doi={10.1017/S0305004100078129},
}

\bib{Quigg:Duality_twisted}{article}{
    AUTHOR = {Quigg, John C.},
     TITLE = {Duality for reduced twisted crossed products of $C^*$\nobreakdash-algebras},
   JOURNAL = {Indiana Univ. Math. J.},
    VOLUME = {35},
      YEAR = {1986},
    NUMBER = {3},
     PAGES = {549--571},
      ISSN = {0022-2518},
  review={\MRref{855174}{88d:46122}},
   doi={10.1512/iumj.1986.35.35029},
}

\bib{Quigg}{article}{
    AUTHOR = {Quigg, John C.},
     TITLE = {Landstad duality for {$C^*$}-coactions},
   JOURNAL = {Math. Scand.},
  FJOURNAL = {Mathematica Scandinavica},
    VOLUME = {71},
      YEAR = {1992},
    NUMBER = {2},
     PAGES = {277--294},
      ISSN = {0025-5521},
     CODEN = {MTSCAN},
   MRCLASS = {46L55 (22D25)},
  MRNUMBER = {1212711 (94e:46119)},
MRREVIEWER = {Robert J. Archbold},
}

\bib{Raeburn-Williams:Morita_equivalence}{book}{
  author={Raeburn, Iain},
  author={Williams, Dana P.},
  title={Morita equivalence and continuous-trace $C^*$\nobreakdash-algebras},
  series={Mathematical Surveys and Monographs},
  volume={60},
  publisher={American Mathematical Society},
  place={Providence, RI},
  date={1998},
  pages={xiv+327},
  isbn={0-8218-0860-5},
  review={\MRref{1634408}{2000c:46108}},
}

\bib{Rieffel:Proper}{article}{
  author={Rieffel, Marc A.},
  title={Proper actions of groups on $C^*$\nobreakdash -algebras},
  conference={ title={Mappings of operator algebras}, address={Philadelphia, PA}, date={1988}, },
  book={ series={Progr. Math.}, volume={84}, publisher={Birkh\"auser Boston}, place={Boston, MA}, },
  date={1990},
  pages={141--182},
  review={\MRref {1103376}{92i:46079}},
}

\bib{Rieffel:Integrable_proper}{article}{
  author={Rieffel, Marc A.},
  title={Integrable and proper actions on $C^*$\nobreakdash -algebras, and square-integrable representations of groups},
  journal={Expo. Math.},
  volume={22},
  date={2004},
  number={1},
  pages={1--53},
  issn={0723-0869},
  review={\MRref {2166968}{2006g:46108}},
}

\bib{Rieffel:Deformation}{article}{
    AUTHOR = {Rieffel, Marc A.},
     TITLE = {Deformation quantization for actions of {${\bf R}^d$}},
   JOURNAL = {Mem. Amer. Math. Soc.},
  FJOURNAL = {Memoirs of the American Mathematical Society},
    VOLUME = {106},
      YEAR = {1993},
    NUMBER = {506},
     PAGES = {x+93},
      ISSN = {0065-9266},
     CODEN = {MAMCAU},
   MRCLASS = {46L87 (58B30 81R50 81S10)},
  MRNUMBER = {1184061 (94d:46072)},
MRREVIEWER = {Arne Schirrmacher},
       DOI = {10.1090/memo/0506},
       URL = {http://dx.doi.org/10.1090/memo/0506},
}
\bib{Rief-K}{article}{
    AUTHOR = {Rieffel, Marc A.},
     TITLE = {{$K$}-groups of {$C^*$}-algebras deformed by actions of
              {${\bf R}^d$}},
   JOURNAL = {J. Funct. Anal.},
  FJOURNAL = {Journal of Functional Analysis},
    VOLUME = {116},
      YEAR = {1993},
    NUMBER = {1},
     PAGES = {199--214},
      ISSN = {0022-1236},
     CODEN = {JFUAAW},
   MRCLASS = {46L55 (19K99 46L80 46L87 46L89)},
  MRNUMBER = {1237992 (94i:46088)},
MRREVIEWER = {Judith A. Packer},
       DOI = {10.1006/jfan.1993.1110},
       URL = {http://dx.doi.org/10.1006/jfan.1993.1110},
}

  \bib{Rief-Q}{incollection}{
    AUTHOR = {Rieffel, Marc A.},
     TITLE = {Quantization and {$C^\ast$}-algebras},
 BOOKTITLE = {{$C^\ast$}-algebras: 1943--1993 ({S}an {A}ntonio, {TX},
              1993)},
    SERIES = {Contemp. Math.},
    VOLUME = {167},
     PAGES = {66--97},
 PUBLISHER = {Amer. Math. Soc.},
   ADDRESS = {Providence, RI},
      YEAR = {1994},
   MRCLASS = {46L60 (81S10)},
  MRNUMBER = {1292010 (95h:46108)},
MRREVIEWER = {Aldo J. Lazar},
       DOI = {10.1090/conm/167/1292010},
       URL = {http://dx.doi.org/10.1090/conm/167/1292010},
}

\bib{Vaes-Vainerman:Extension_of_lcqg}{article}{
  author={Vaes, Stefaan},
  author={Vainerman, Leonid},
  title={Extensions of locally compact quantum groups and the bicrossed product construction},
  journal={Adv. Math.},
  volume={175},
  date={2003},
  number={1},
  pages={1--101},
  issn={0001-8708},
  review={\MRref{1970242}{2004i:46103}},
  doi={10.1016/S0001-8708(02)00040-3},
}

  \end{biblist}
\end{bibdiv}

\vskip 0,5pc

\end{document}